\newtheorem{theorem}{Theorem}[section]
\newtheorem{lemma}[theorem]{Lemma}
\newtheorem{proposition}[theorem]{Proposition}
\newcommand{\na}[1]{\mathit{#1}}    
\newcommand{\limplies}{\rightarrow}
\newcommand{\ex}[1]{\exists #1 \;} 
\newcommand{\fa}[1]{\forall #1 \;} 
\newcommand{\dash}{\mathalpha{\mbox{-}}} 
\newcommand{\ph}{\varphi}
\newcommand{\la}{\langle}
\newcommand{\ra}{\rangle}
\newcommand{\AXM}[1]{\AXC{$#1$}}
\newcommand{\BIM}[1]{\BIC{$#1$}}
\newcommand{\TIM}[1]{\TIC{$#1$}}
\newcommand{\AXN}[1]{\AX$#1$}
\newcommand{\BIN}[1]{\BI$#1$}
\newcommand{\UIN}[1]{\UI$#1$}
\newcommand{\TIN}[1]{\TI$#1$}
\newcommand{\fCenter}{\Rightarrow}
\newcommand{\on}{\mathrm{on}}
\newcommand{\sameside}{\mathrm{same\dash side}}
\newcommand{\diffside}{\mathrm{diff\dash side}}
\newcommand{\mybetween}{\mathrm{between}}
\newcommand{\mycenter}{\mathrm{center}}
\newcommand{\inside}{\mathrm{inside}}
\newcommand{\outside}{\mathrm{outside}}
\newcommand{\segment}{\mathrm{segment}}
\newcommand{\seg}[1]{\overline{#1}}
\newcommand{\myangle}{\mathrm{angle}}
\newcommand{\myarea}{\mathrm{area}}
\newcommand{\area}[1]{\triangle #1}
\newcommand{\intersects}{\mathrm{intersects}}
\newcommand{\rightangle}{\mathrm{right\mbox{-}angle}}
\newcommand{\RR}{\mathbb{R}}
\newcommand{\showdiagram}[1]{#1}
\newcommand{\Esymm}{\ensuremath{\mathrm{E1}}}
\newcommand{\Etrans}{\ensuremath{\mathrm{E2}}}
\newcommand{\Eid}{\ensuremath{\mathrm{E3}}}
\newcommand{\Seg}{\ensuremath{\mathrm{SC}}}
\newcommand{\Pasch}{\ensuremath{\mathrm{P}}}
\newcommand{\Five}{\ensuremath{\mathrm{5S}}}
\newcommand{\Betw}{\ensuremath{\mathrm{B}}}
\newcommand{\LD}{\ensuremath{\mathrm{2L}}}
\newcommand{\UD}{\ensuremath{\mathrm{2U}}}
\newcommand{\Euc}{\ensuremath{\mathrm{PP}}}
\newcommand{\NNeg}{\ensuremath{\mathrm{Neg}}}
\newcommand{\Int}{\ensuremath{\mathrm{Int}}}
\newcommand{\OOn}{\ensuremath{\on}}
\newcommand{\SSide}{\ensuremath{\sameside}}
\newcommand{\Between}{\ensuremath{\mybetween}}
\newcommand{\OnCirc}{\ensuremath{\on}}
\newcommand{\Inside}{\ensuremath{\inside}}
\newcommand{\Intersect}{\ensuremath{\intersects}}
\title{A formal system for Euclid's \emph{Elements}}
\author{Jeremy Avigad, Edward Dean, and John Mumma\footnote{We are
    especially indebted to Ken Manders, whose work and encouragement
    set us on our way. We would also like to thank Jesse Alama, Alan
    Baker, Michael Beeson, Karine Chemla, Annalisa Coliva, Mic
    Detlefsen, Harvey Friedman, Mark Goodwin, Jeremy Gray, Jeremy
    Heis, Anthony Jones, Danielle Macbeth, Penelope Maddy, Paolo
    Mancosu, Henry Mendell, Marco Panza, Jan von Plato, Vaughan Pratt,
    Dana Scott, Lisa Shabel, Stewart Shapiro, Wilfried Sieg, Neil
    Tennant, Freek Wiedijk, and an anonymous referee for helpful
    comments, criticisms, corrections, and suggestions; and we
    apologize to any others who also deserve our thanks but have been
    inadvertently overlooked. Avigad's work has been partially
    supported by NSF grant DMS-0700174 and a grant from the John
    Templeton Foundation.}}
\begin{document}

\maketitle

\begin{abstract}
  We present a formal system, $\na{E}$, which provides a faithful
  model of the proofs in Euclid's \emph{Elements}, including the use
  of diagrammatic reasoning.
\end{abstract}

\tableofcontents

\section{Introduction}
\label{introduction:section}

For more than two millennia, Euclid's \emph{Elements} was viewed by
mathematicians and philosophers alike as a paradigm of rigorous
argumentation. But the work lost some of its lofty status in the
nineteenth century, amidst concerns related to the use of diagrams in
its proofs. Recognizing the correctness of Euclid's inferences was
thought to require an ``intuitive'' use of these diagrams, whereas, in
a proper mathematical argument, every assumption should be spelled out
explicitly. Moreover, there is the question as to how an argument that
relies on a single diagram can serve to justify a general mathematical
claim: any triangle one draws will, for example, be either acute,
right, or obtuse, leaving the same intuitive faculty burdened with the
task of ensuring that the argument is equally valid for \emph{all}
triangles.\footnote{The question was raised by early modern
  philosophers from Berkeley \cite[Section 16]{Berk} to Kant
  \cite[A716/B744]{Kant}. See
  \cite{friedman:85,goodwin:03,mumma-miller-review,shabel:03a,shabel:03,shabel:04}
  for discussions of the philosophical concerns.}  Such a reliance on
intuition was therefore felt to fall short of delivering mathematical
certainty.

Without denying the importance of the \emph{Elements}, by the end of
the nineteenth century the common attitude among mathematicians and
philosophers was that the appropriate \emph{logical} analysis of
geometric inference should be cast in terms of axioms and rules of
inference. This view was neatly summed up by Leibniz more than two
centuries earlier:
\begin{quote}
\ldots it is not the figures which furnish the proof with geometers,
though the style of the exposition may make you think so. The force of
the demonstration is independent of the figure drawn, which is drawn
only to facilitate the knowledge of our meaning, and to fix the
attention; it is the universal propositions, i.e.~the definitions,
axioms, and theorems already demonstrated, which make the reasoning,
and which would sustain it though the figure were not
there. \cite[p.~403]{Leibniz}
\end{quote}
This attitude gave rise to informal axiomatizations by Pasch
\cite{pasch:82}, Peano \cite{peano:89}, and Hilbert \cite{hilbert:99}
in the late nineteenth century, and Tarski's formal axiomatization
\cite{tarski-whatis} in the twentieth.

Proofs in these axiomatic systems, however, do not look much like
proofs in the \emph{Elements}. Moreover, the modern attitude belies
the fact that for over two thousand years Euclidean geometry was a
remarkably stable practice. On the consensus view, the logical gaps in
Euclid's presentation should have resulted in vagueness or ambiguity
as to the admissible rules of inference. But, in practice, they did
not; mathematicians through the ages and across cultures could read,
write, and communicate Euclidean proofs without getting bogged down in
questions of correctness. So, even if one accepts the consensus view,
it is still reasonable to seek some sort of explanation of the success
of the practice.

Our goal here is to provide a detailed analysis of the methods of
inference that are employed in the \emph{Elements}. We show, in
particular, that the use of diagrams in a Euclidean proof is not soft
and fuzzy, but controlled and systematic, and governed by a
discernible logic.  This provides a sense in which Euclid's methods
are more rigorous than the modern attitude suggests.

Our study draws on an analysis of Euclidean reasoning due to Ken
Manders~\cite{manders:08}, who distinguished between two types of
assertions that are made of the geometric configurations arising in
Euclid's proofs. The first type of assertion describes general
topological properties of the configuration, such as incidence of
points and lines, intersections, the relative position of points along
a line, or inclusions of angles.  Manders called these \emph{co-exact
  attributions}, since they are stable under perturbations of the
diagram; below, we use the term ``diagrammatic assertions'' instead.
The second type includes things like congruence of segments and
angles, and comparisons between linear or angular magnitudes. Manders
called these \emph{exact attributions}, because they are not stable
under small variations, and hence may not be adequately
represented in a figure that is roughly drawn. Below, we use the term
``metric assertions'' instead. Inspecting the proofs in the
\emph{Elements}, Manders observed that the diagrams are only used to
record and infer co-exact claims; exact claims are always made
explicit in the text.  For example, one might infer from the diagram
that a point lies between two others on a line, but one would never
infer the congruence of two segments without justifying the conclusion
in the text.  Similarly, one cannot generally infer, from inspecting
two angles in a diagram, that one is larger than the other; but one
can draw this conclusion if the diagram ``shows'' that the first is
properly contained in the second.

Below, we present a formal axiomatic system, $\na{E}$, which spells
out precisely what inferences can be ``read off'' from the diagram.
Our work builds on Mumma's PhD thesis~\cite{mumma-phd}, which
developed such a diagram-based system, which he called $\na{Eu}$. In
Mumma's system, diagrams are bona-fide objects, which are introduced
in the course of a proof and serve to license inferences. Mumma's
diagrams are represented by geometric objects on a finite coordinate
grid. However, Mumma introduced a notion of ``equivalent diagrams'' to
explain how one can apply a theorem derived from a different diagram
that nonetheless bears the same diagrammatic information.  Introducing
an equivalence relation in this way suggests that, from a logical
perspective, what is really relevant to the proof is the equivalence
class of all the diagrams that bear the same information. We have thus
chosen a more abstract route, whereby we identify the ``diagram'' with
the co-exact information that the physical drawing is supposed to
bear. Nathaniel Miller's PhD dissertation \cite{miller:08} provides
another formal system for diagrammatic reasoning, along these lines,
employing ``diagrams'' that are graph-theoretic objects subject to
certain combinatorial constraints.

Both Mumma and Miller address the issue of how reasoning based on a
particular diagram can secure general conclusions, though they do so in
different ways. In Miller's system, when a construction can result in
topologically distinct diagrammatic configurations, one is required to
consider all the cases, and show that the desired conclusion is
warranted in each. In contrast, Mumma stipulated general rules,
based on the particulars of the construction, that must be followed to
ensure that the facts read off from the particular diagram are
generally valid. Our formulation of $\na{E}$ derives from this latter
approach, which, we argue below, is more faithful to Euclidean
practice.

Moreover, we show that our proof system is sound and complete for a
standard semantics of ``ruler-and-compass constructions,'' expressed
in modern terms. Thus, our presentation of $\na{E}$ is accompanied by
both philosophical and mathematical claims: on the one hand, we claim
that our formal system accurately models many of the key
methodological features that are characteristic of the proofs found in
books I through IV of the \emph{Elements}; and, on the other hand, we
claim that it is sound and complete for the appropriate semantics.

The outline of this paper is as follows. In
Section~\ref{characterizing:section}, we begin with an informal
discussion of proofs in the \emph{Elements}, calling attention to the
particular features that we are trying to model. In
Section~\ref{formal:system:section}, we describe the formal system,
$\na{E}$, and specify its language and rules of inference. In
Section~\ref{comparison:section}, we justify the claim that our
system provides a faithful model of the proofs in the \emph{Elements},
calling attention to points of departure as well as points of
agreement. In Section~\ref{completeness:section}, we show that our
formal system is sound and complete with respect to ruler-and-compass
constructions. In Section~\ref{implementation:section}, we discuss ways
in which contemporary methods of automated reasoning can be used to
implement a proof checker that can mechanically verify proofs in our
system.  Finally, in Section~\ref{conclusions:section}, we summarize
our findings, and indicate some questions and issues that are
not addressed in our work.

\section{Characterizing the \emph{Elements}}
\label{characterizing:section}

In this section, we clarify the claim that our formal system is more
faithful to the \emph{Elements} than other axiomatic systems, by
describing the features of the \emph{Elements} that we take to be
salient.

\subsection{Examples of proofs in the \emph{Elements}}
\label{examples:one:section}

To support our discussion, it will be helpful to have two examples of
Euclidean proofs at hand.

\newtheorem*{Prop10}{Proposition I.10}
\begin{Prop10}
To bisect a given finite straight line.
\end{Prop10}

\begin{center}
\psfrag{c}{$c$}\psfrag{d}{$d$}\psfrag{a}{$a$}\psfrag{b}{$b$}
\includegraphics[height=2cm]{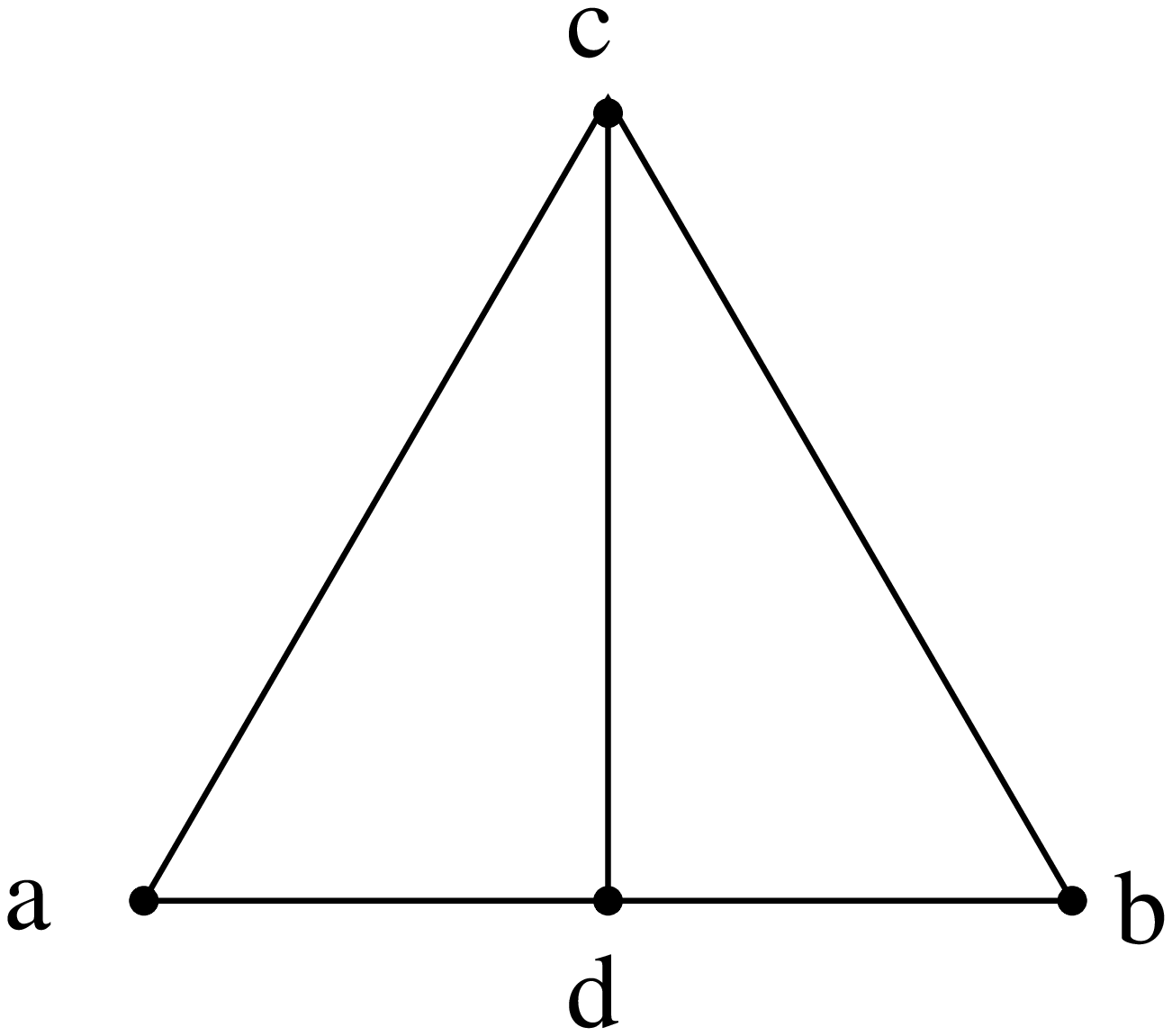}
\end{center}

\begin{proof}
  Let $ab$ be the given finite straight line.\\
  It is required to bisect the finite straight line $ab$.\\
  Let the equilateral triangle $abc$ be constructed on it [I.1],
  and let the angle $acb$ be bisected by the straight line $cd$. [I.9]\\
  I say that the straight line $ab$ is bisected at the point $d$.\\
  For, since $ac$ is equal to $cb$, and $cd$ is common, the two sides
  $ac$, $cd$ are equal the two sides $bc$, $cd$ respectively; and the
  angle $acd$ is equal to the angle $bcd$; therefore the base $ad$ is
  equal to the base $bd$. [I.4]\\
  Therefore the given finite straight line $ab$ has been bisected at $d$.\\
  Q.E.F.
\end{proof}

\noindent This is Proposition 10 of Book I of the \emph{Elements}. All
our references to the \emph{Elements} refer to the Heath translation
\cite{euclid}, though we have replaced upper-case labels for points by
lower-case labels in the proof, to match the description of our formal
system, $\na{E}$.

As is typical in the \emph{Elements}, the initial statement of the
proposition is stated in something approximating natural language. A
more mathematical statement of the proposition is then given in the
opening lines of the proof. The annotations in brackets refer back to
prior propositions, so, for example, the third sentence of the proof
refers to Propositions 1 and 9 of Book I. Notice that what it means
for a point $d$ to ``bisect'' the finite segment $ab$ can be analyzed
into topological and metric components: we expect $d$ to lie on the
same line as $a$ and $b$, and to lie between $a$ and $b$ on that line;
and we expect that the length of the segment from $a$ to $b$ is equal
to the length of the segment from $b$ to $d$. Only the last part of
the claim is made explicit in the text; the other two facts are
implicit in the diagram.

In his fifth century commentary on the first book of the
\emph{Elements}, Proclus divided Euclid's propositions into two
groups: ``problems,'' which assert that a construction can be carried
out, or a diagram expanded, in a certain way; and ``theorems,'' which
assert that certain properties are essential to a given diagram (see
\cite[pp.~63--67]{Proclus}, or \cite[vol.~I, pp.~124--129]{euclid}).
Euclid himself marks the distinction by ending proofs of problems with
the phrase ``that which it was required to do'' (abbreviated by
``Q.E.F.,'' for ``quod erat faciendum,'' by Heath); and ending proofs
of theorems with the phrase ``that which it was required to prove''
(abbreviated by ``Q.E.D.,'' for ``quod erat
demonstratum''). Proposition I.10 calls for the construction of a
point bisecting the line, and so the proof ends with ``Q.E.F.''

\newtheorem*{Prop16}{Proposition I.16}

\begin{Prop16}
In any triangle, if one of the sides be
  produced, then the exterior angle is greater than either of the
  interior and opposite angles.
\end{Prop16}

\begin{center}
\psfrag{c}{$c$}\psfrag{d}{$d$}\psfrag{a}{$a$}\psfrag{b}{$b$}\psfrag{e}{$e$}\psfrag{f}{$f$}\psfrag{g}{$g$}
\includegraphics[height=2cm]{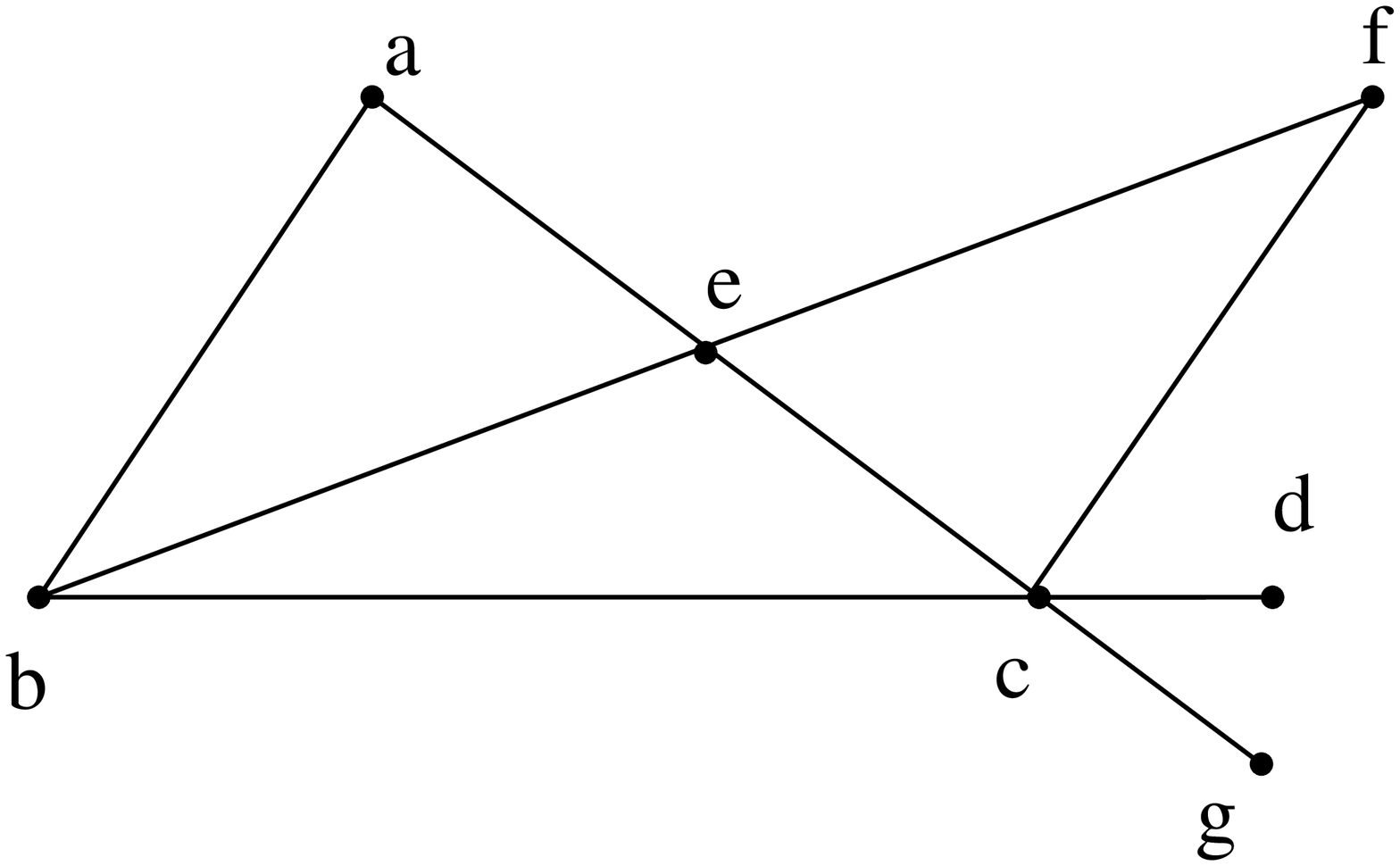}
\end{center}

\begin{proof}
Let $abc$ be a triangle, and let one side of it $bc$ be produced to $d$.\\
I say that the exterior angle $acd$ is greater than either of the
interior and opposite angles $cba$, $bac$.\\
Let $ac$ be bisected at $e$ [I.10],\\
and let $be$ be joined and produced in a straight line to $f$;\\
Let $ef$ be made equal to $be$ [I.3],\\
let $fc$ be joined, [Post.1]\\
and let $ac$ be drawn through to $g$. [Post.2]\\
Then, since $ae$ is equal to $ec$, and $be$ to $ef$, the two sides
$ae$, $eb$ are equal the two sides $ce$, $ef$ respectively; and the angle
$aeb$ is equal to the angle $fec$, for they are vertical angles. [I.15]\\
Therefore the base $ab$ is equal to 
the base $fc$, the triangle $abe$ is equal to the triangle $cfe$, and the
remaining angles equal the remaining angles respectively, namely those
which the equal sides subtend; [I.4]\\
therefore the angle $bae$ is equal to the angle
$ecf$.\\
But the angle $ecd$ is greater than the angle $ecf$; [C.N.5]\\
therefore the angle $acd$ is greater than the angle $bae$.\\
Similarly also, if $bc$ be bisected, the angle $bcg$, that is, the angle
$acd$ [I.15], can be proved greater than the angle $abc$ as well.\\
Therefore etc.\\
Q.E.D.
\end{proof}

\noindent Here, the abbreviation ``Post.''\ in brackets refers to
Euclid's postulates, while the abbreviation ``C.N.''\ refers to the
common notions. Notice that the proposition assumes that the triangle
is nondegenerate. Later on, Euclid will prove the stronger Proposition
I.32, which shows the the exterior angle $acd$ is exactly equal to the
sum of the interior and opposite angles $cba$ and $bac$. But to do
that, he has to develop properties of parallel lines, for which the
current proposition is needed.

In both cases, after stating the theorem, the proofs begin with a
construction phrase (\emph{kataskeue}), in which new objects
are introduced into the diagram. This is followed by the deduction
phase (\emph{apodeixis}), where the desired conclusions are drawn. The
demonstration phase is, for the most part, devoted towards registering
metric information, that is, equalities and inequalities between
various magnitudes. But some of the inferences depend on the
diagrammatic configuration. For example, seeing that angles $aeb$ and
$fec$ are equal in the second proof requires checking the diagram to
see that they are vertical angles. Similarly, seeing that $ecd$ is
greater than $ecf$ is warranted by common notion 5, ``the whole is
greater than the part,'' requires checking the diagram to confirm that
$ecf$ is indeed contained in $ecd$.

\subsection{The use of diagrams}
\label{diagrammatic:nature:section}

The most salient feature of the \emph{Elements} is the fact that
diagrams play a role in the arguments. But what, exactly, does this
mean?

Our first observation is that whatever role the diagram plays, it is
inessential to the communication of the proof. In fact, data on the
early history of the text of the \emph{Elements} is meager, and there
is no chain linking our contemporary diagrams with the ones that
Euclid actually drew; it is likely that, over the years, diagrams were
often reconstructed from the text (see Netz \cite{Netz99}). But a
simple experiment offers more direct support for our claim. If you
cover up the diagrams and reread the proofs in the last section, you
will find that it is not difficult to reconstruct the
diagram. Occasionally, important details are only represented in the
diagram and not the text; for example, in the proof of Proposition
I.10, the text does not indicate that $d$ is supposed to mark the
intersection of the angle bisector and the opposite side of the
triangle. But there is no reason why it couldn't; for example, we
could replace the second sentence with the following one:
\begin{quote}
Let the equilateral triangle $abc$ be constructed on it, let the angle
$acb$ be bisected by the straight line $L$, and let $d$ be the intersection
of $L$ and $ab$.
\end{quote}
The fact that minor changes like this render it straightforward to
construct an adequate diagram suggests that the relevant information
can easily be borne by the text. 

But, to continue the experiment, try reading these proofs, or any of
Euclid's proofs, without the diagram, and without drawing a diagram.
You will likely finding yourself trying to \emph{imagine} the diagram,
to ``see'' that the ensuing diagrammatic claims are justified. So even
if, in some sense, the text-based version of the proof is
self-contained, there is something about the proof, and the tasks we
need to perform to understand the proof, that makes it
``diagrammatic.''

To make the point clear, consider the following example: 
\begin{quote}
  Let $L$ be a line. Let $a$ and $b$ be points on $L$, and let $c$ be
  between $a$ and $b$. Let $d$ be between $a$ and $c$, and let $e$ be
  between $c$ and $b$. Is $d$ necessarily between $a$ and $e$?
\end{quote}
Once again, it is hard to make sense of the question without drawing a
diagram or picturing the situation in your mind's eye; but doing so
should easily convince you that the answer is ``yes.'' With the
diagram in place, there is nothing more that needs to be said. The
inference is immediate, whether or not we are able to cite the axioms
governing the betweenness predicate that would be used to justify the
assertion in an axiomatic proof system.

A central goal of this paper is to analyze and describe these
fundamental diagrammatic inferences. In doing so, we do not attempt to
explain why it is easier for us to verify these inferences with a
physical diagram before us, nor do we attempt to explain the social or
historical factors that made such inferences basic to the
\emph{Elements}. In other words, in analyzing the \emph{Elements}, we
adopt a methodological stance which focuses on the logical structure
of the proofs while screening off other important issues. We return to
a discussion of this in Section~\ref{methodological:section}.

\subsection{The problem of ensuring generality}
\label{generality:section}

On further reflection, the notion of a diagrammatic inference becomes
puzzling. Consider the following example:
\begin{quote}
Let $a$ and $b$ be distinct points, and let $L$ be the line through $a$
and $b$. Let $c$ and $d$ be points on opposite sides
of $L$, and let $M$ be the line through $c$ and $d$. Let $e$ be the
intersection of $L$ and $M$. Is $e$ necessarily between $c$ and $d$?
\end{quote}
Drawing a diagram, or picturing the situation in your mind's eye,
should convince you that the answer is ``yes,'' based on an
``intuitive'' understanding of the concepts involved:
\showdiagram{
\begin{center}
\psfrag{L}{$L$}\psfrag{a}{$a$}\psfrag{b}{$b$}\psfrag{c}{$c$}\psfrag{d}{$d$}\psfrag{e}{$e$}\psfrag{M}{$M$}
\includegraphics[height=2.5cm]{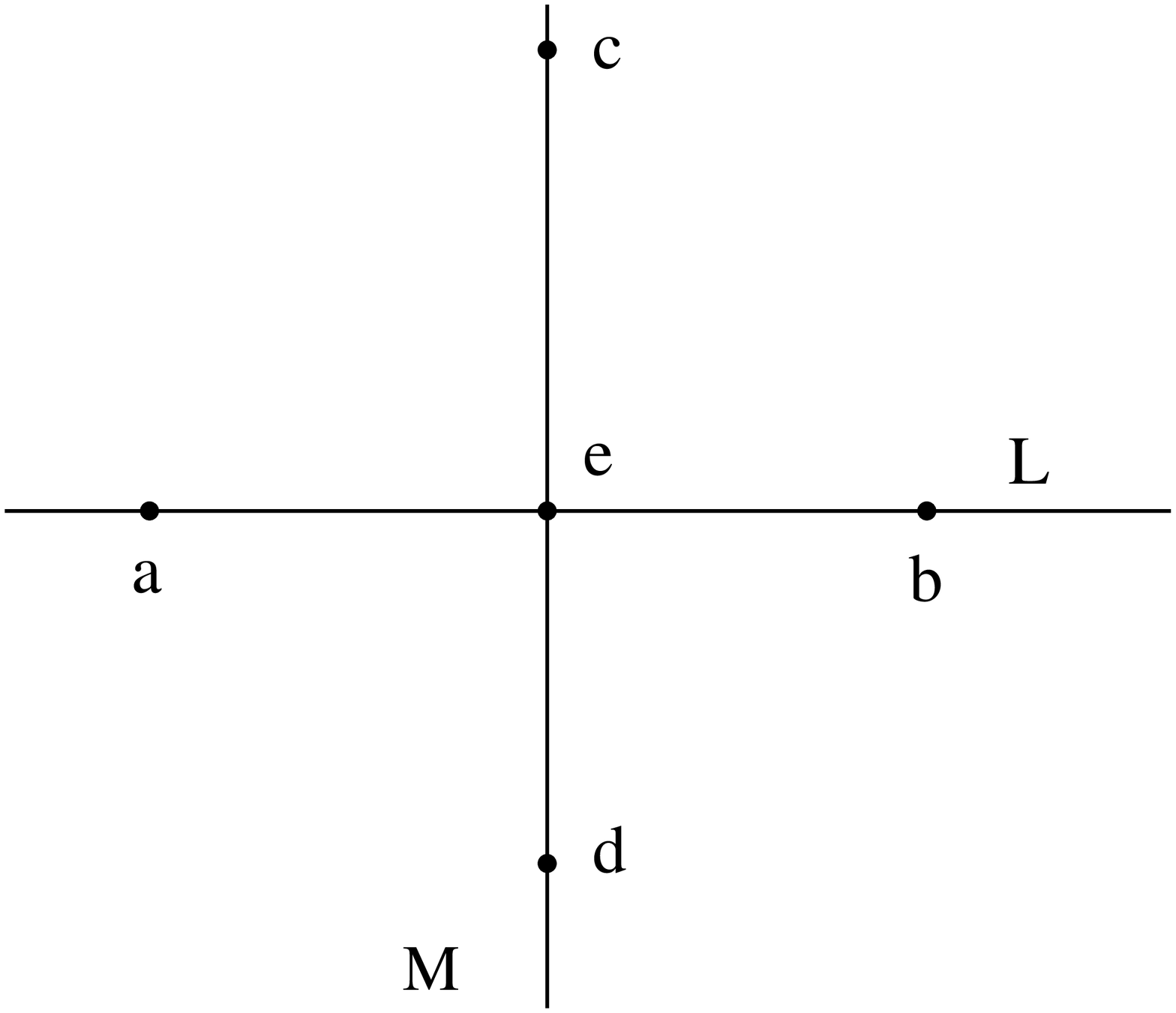}
\end{center}
} In fact, a diagrammatic inference was even implicit in the
instruction ``let $e$ be the intersection of $L$ and $M$,'' namely, in
seeing that $L$ and $M$ necessarily intersect.

So far, all is well. But now suppose we replace the last question in
the example with the following:
\begin{quote}
Is $e$ necessarily between $a$ and $b$?
\end{quote}
Consulting our diagram, we should perhaps conclude that the answer is
``yes.'' But that is patently absurd; we could easily have drawn the
diagram to put $e$ anywhere along $L$. Neither Euclid nor any
competent student of Euclidean geometry would draw the invalid
inference. Thus any respectable notion of ``diagrammatic inference''
has to sanction the first inference in our example, but bar the
second.

There are two morals to be extracted from this little exercise. The
first is that, however the diagram functions in a Euclidean proof,
using the diagram is not simply a matter of reading off features found
in the physical instantiation. Any way of drawing the diagram will
give $e$ a position relative to $a$ and $b$, but none of them can be
\emph{inferred} from the givens. The physical instance of the diagram
thus serves as a token, or artifact, that is intended to be used in
certain ways; understanding the role of the diagram necessarily
involves understanding the intended use.\footnote{Danielle Macbeth
  \cite{macbeth:unp} has characterized this sort of diagram use in
  terms of the Gricean distinction between ``natural'' and
  ``non-natural'' meaning. Manders \cite{manders:08} underscores this
  point by observing that Euclidean diagrams are used equally well in
  reductio proofs, where the conclusion is that the illustrated
  configuration cannot exist. One finds a nice example of this in
  Proposition 10 of Book III, which shows that two distinct circles
  cannot intersect in more than two points. Clearly, in cases like
  this, the diagram does not serve as a ``literal'' or direct
  representation of the relevant configuration.}

The second moral is that the physical instance of the diagram, taken
out of context, does not bear \emph{all} the relevant inferential
data. In the example above, the diagram is symmetric: if we rotate the
diagram a quarter turn and switch the order of the questions, the new
diagram and questionnaire differs from the previous one only by the
labels of the geometric objects; but whereas ``yes'' and then ``no''
are the correct answers to the first set of questions, ``no'' and then
``yes'' are the correct answers to the second.  What this means is
that the inferences that we are allowed to perform depend not just on
the illustration, but also on the preamble; that is, the inference
depends on knowing the construction that the diagram is supposed to
illustrate. Hence, understanding the role of the diagram in Euclidean
practice also involves understanding how the details of the
construction bear upon the allowable inferences.

In Nathaniel Miller's formal system for Euclidean geometry
\cite{miller:08}, every time a construction step can give rise to
different topological configurations, the proof requires a case split
across all the possible configurations. His system provides a calculus
by which one can determine (an upper bound on) all the realizable
configurations (and systematically rule out some of the configurations
that are not realizable). This can result in a combinatorial explosion
of cases, and Miller himself concedes that it can be difficult to work
through them all. (See also Mumma's review \cite{mumma-miller-review}.)
Thus, although Miller's system is sound for the intended semantics and
may be considered ``diagrammatic'' in nature, it seems far removed
from the \emph{Elements}, where such exhaustive case splits are
nowhere to be found. (We will, however, have a lot more to say about
the case distinctions that do appear in the \emph{Elements}; see
Sections~\ref{direct:section} and \ref{departures:section}.)

Mumma's original proof system, $\na{Eu}$ \cite{mumma-phd,mumma:unp:a},
used a different approach. Although proofs in $\na{Eu}$ are based on
particular diagrams, not every feature found in a particular diagram
can be used in the proof.  Rather, one can only use those features of
the diagram that are guaranteed to hold \emph{generally}, given the
diagram's construction. Mumma's system therefore includes precise
rules that determine when a feature has this property. Our system,
$\na{E}$, pushes the level of abstraction one step further: in
$\na{E}$ the diagram \emph{is} nothing more than the collection of
generally valid diagrammatic features that are guaranteed by the
construction. In other words, given the construction in the example
above, we identify the diagram with the information provided by the
construction --- that $a$ and $b$ are distinct points, $L$ is a line,
$a$ is on $L$, $b$ is on $L$, $c$ and $d$ are on opposite sides of
$L$, and so on --- and all the direct diagrammatic consequences of
these data. This requires us to spell out the notion of a ``direct
diagrammatic consequence,'' which is exactly what we do in
Section~\ref{direct:section}.

\subsection{The logical form of proofs in the \emph{Elements}}
\label{logical:section}

It is commonly noted that Euclid's proofs are constructive, in the
sense that existence assertions are established by giving explicit
constructions. One would therefore not expect Euclidean reasoning to
use the full range of classical first-order logic, which allows
nonconstructive existence proofs, but, rather, a suitably constructive
fragment.

In fact, when one surveys the proofs in the \emph{Elements}, one is
struck by how little logic is involved, by modern standards. Go back
to the examples in Section~\ref{examples:one:section}, and count the
instances of logical staples like ``every,'' ``some,'' ``or,'' and
``if \ldots then.'' The results may surprise you.

Of course, the statements of the two propositions are best modeled
with a universal quantifier: we can read Proposition I.10 as the
assertion that ``any finite straight line can be bisected'' and
Proposition I.16 begins with the words ``any triangle.'' Furthermore,
there is an existential quantifier implicit in the statement of
Proposition I.10, which, in modern terms, might be expressed ``for
every finite straight line, there is a point that bisects it.''  In
modern terms, it is the existential quantifier implicit in the
statement of Proposition I.10 that makes this proposition a
``problem'' in Proclus' terminology.  There is no such quantifier
implicit in Proposition I.16, which is therefore a ``theorem.''

Thus, in a Euclidean proposition, an explicit or implicit universal
quantifier serves to set forth the givens, and, if the proposition is
a ``problem,'' an existential statement is used to specify the
properties of the objects to be constructed. What is remarkable is
that these are the only quantifiers one finds in the text; the proof
itself is purely quantifier-free. Not only that; the proof is
virtually \emph{logic free}. A construction step introduces new
objects meeting certain specifications; for example, the third line of
the proof of Proposition I.10 introduces an equilateral triangle. We
will see that in our formalization, the specification can always be
described as a list of atomic formulas and their negations. Other
lines in a Euclidean proof simply make atomic or negated atomic
statements, like ``the base $ad$ is equal to the base $bd$,''
sometimes chained together with the word ``and.''

In other words, Euclidean proofs do little more than introduce objects
satisfying lists of atomic (or negation atomic) assertions, and then
draw further atomic (or negation atomic) conclusions from these, in a
simple linear fashion. There are two minor departures from this
pattern. Sometimes a Euclidean proof involves a case split; for
example, if $ab$ and $cd$ are unequal segments, then one is longer
than the other, and one can argue that a desired conclusion follows in
either case. The other exception is that Euclid sometimes uses a
\emph{reductio}; for example, if the supposition that $ab$ and $cd$
are unequal yields a contradiction then one can conclude that $ab$ and
$cd$ are equal. In our formal system, such case splits are always case
splits on the truth of an atomic formula, and a proof by contradiction
always establishes an atomic formula or its negation.

There is one more feature of Euclid's proofs that is worth calling
attention to, namely, that in Euclid's proofs the construction steps
generally precede the deductive conclusions. Thus, the proofs
generally split into two phases: in the construction
(\emph{kataskeue}) phase, one carries out the construction,
introducing all the objects that will be needed to reach the desired
conclusion; and then in the deduction (\emph{apodeixis}) phase one
infers metric and diagrammatic consequences (see
\cite[pp.~159--160]{Proclus} or \cite[vol.~I,
pp.~129--130]{euclid}). This division is \emph{not} required by our
formal system, which is to say, nothing goes wrong in our proof system
if one constructs some objects, draws some conclusions, and then
carries out another construction. In other words, we take the division
into the two phases to be a stylistic choice, rather than a logical
necessity.  For the most part, one can follow this stylistic
prescription within $\na{E}$, and carry out all the constructions
first. An exception to this occurs when, by $\na{E}$'s lights, some
deductive reasoning is required to ensure that prerequisites for
carrying out a construction step are met. For example, we will see in
Section~\ref{departures:section} that our formal system takes issue
with Euclid's proof of Proposition I.2: where Euclid carries out a
complex construction without further justification, our system
requires an explicit (but brief) argument, amidst the construction, to
ensure that a certain point lies inside a certain circle. But even
Euclid himself sometimes fails to maintain the division between the
two phases, and includes demonstrative arguments in the construction
phase; see, for example, our discussion of Euclid's proof of
Proposition I.44, in Section~\ref{departures:section}.  Thus, our
interpretation of the usual division of a Euclidean proof into
construction and deduction phases is supported by the text of the
\emph{Elements} itself.

\subsection{Nondegeneracy assumptions}
\label{nondegeneracy:section}

As illustrated by our examples, Euclid typically assumes his geometric
configurations are nondegenerate. For example, if $a$ and $b$ are
given as arbitrary points, Euclid assumes they are \emph{distinct}
points, and if $abc$ is a triangle, the points $a$, $b$, and $c$ are
further assumed to be noncollinear. These are also sometimes called
``genericity assumptions''; we are following Wu \cite{wu:94} in using
the term ``nondegeneracy.''

Insofar as these assumptions are implicit in Euclid, his presentation
can be criticized on two grounds:
\begin{enumerate}
\item The theorems are not always as strong as they can be, because
  the conclusions sometimes can still be shown to hold when some of
  the nondegeneracy constraints are relaxed. (Sometimes one needs to
  clarify the reading of the conclusion in a degenerate case.)
\item There are inferential gaps: when Euclid \emph{applies} a theorem
  to the diagram obtained from a construction in the proof of a later
  theorem, he does not check that the nondegeneracy assumptions hold,
  or can be assumed to hold without loss of generality.
\end{enumerate}
With respect the second criticism, Wu writes:
\begin{quote}
In the proof of a theorem, even though the configuration of the
hypothesis at the outset is located in a generic, nondegenerate
position, we are still unable to determine ahead of time whether or
not the degenerate cases will occur when applying other theorems in
the proof process. Not only is the verification of every applied
theorem cumbersome and difficult, but it is actually also impossible
to guarantee that the degenerate cases (in which the theorem is
meaningless or false) do not happen in the proof process. On the other
hand, we have no effective means to judge how much to restrict the
statement of a theorem (to be proved) in order to ensure the truth of
the theorem. These problems make it impossible for the Euclidean
method of theorem proving to meet the requirements of necessary
rigor. \cite[p.~118]{wu:94}
\end{quote}

Wu's comments refer to geometric theorem proving in general, not just
the theorems of the \emph{Elements}. With respect to the latter, we
feel that the quote overstates the case: for the most part, the
nondegeneracy requirements for theorem application in Euclid are
easily met by assuming that the construction is appropriately generic.
We discuss a mild exception in Section~\ref{departures:section},
noting that, according to $\na{E}$, Euclid should have said a few more
words in the proof of Proposition I.9. But we do not know of any
examples where substantial changes are needed.

Furthermore, the first criticism is only damning insofar as the
degenerate cases are genuinely interesting. Nonetheless, from a modern
standpoint, it is better to articulate just what is required in the
statement of a theorem. Thus, we have chosen to ``go modern'' with
$\na{E}$, in the sense that any distinctness assumptions (inequality
of points, non-incidence of points and lines) that are required have
to be stated explicitly as hypotheses. Although this marks a slight
departure from Euclid, the fact that all assumptions are made explicit
provides a more flexible framework to explore the issue as to which
assumptions are implicit in his proofs.

\subsection{Our methodology}
\label{methodological:section}

We have cast our project as an attempt to model Euclidean diagrammatic
proof, aiming to clarify its logical form, and, in particular, the
nature of diagrammatic inference. In casting our project in this way,
we are adopting a certain methodological stance. From a logical
standpoint, what makes a Euclidean proof ``diagrammatic'' is
\emph{not} the fact that we find it helpful to consult a diagram in
order to verify the correctness of the proof, or that, in the absence
of such a physical artifact, we tend to roll our eyes towards the back
of our heads and imagine such a diagram. Rather, the salient feature
of Euclidean proof is that certain sorts of inferences are admitted as
basic, and are made without further justification. When we say we are
analyzing Euclidean diagrammatic reasoning, we mean simply that we are
trying to determine which inferences have this basic character, in
contrast to the geometrically valid inferences that are spelled out in
greater detail in the text of the \emph{Elements}.

Our analysis may therefore seem somewhat unsatisfying, in the sense
that we do not attempt to explain \emph{why} the fundamental methods
of inference in the \emph{Elements} are, or can be, or should be,
taken to be basic. This is not to imply that we do not take
such questions to be important. Indeed, it is just \emph{because} they
are such obvious and important questions that we are taking pains to
emphasize the restricted character of our project.

What makes these questions difficult is that it is often not clear just
what type of answer or explanation one would like. In order to explain
why Euclidean practice is the way it is, one might reasonably invoke
historical, pedagogical, or more broadly philosophical considerations.
It may therefore help to highlight various types of analysis
that are \emph{not} subsumed by our logical approach. It does not
include, \emph{per se}, any of the following:
\begin{itemize}
\item a \emph{historical} analysis of how the \emph{Elements} came to
  be and attained the features we have described;
\item a \emph{philosophical} analysis as to what characterizes the
  inferences above as epistemically special (beyond that they
  interpret the ruler-and-compass constructions of modern geometric
  formalizations, and are sound and complete for the corresponding
  semantics), or in what sense they should be accepted as ``immediate'';
\item a \emph{psychological} or \emph{cognitive} or \emph{pedagogical}
  analysis of the human abilities that make it possible, and useful,
  to understand proofs in that form; or
\item a \emph{computational} analysis as to the most efficient data
  structures and algorithms for verifying the inferences we have
  characterized as ``Euclidean,'' complexity upper and lower bounds,
  or effective search procedures. 
\end{itemize}
We do, however, take it to be an important methodological point that
the questions we address here can be separated from these related
issues. We hope, moreover, that the understanding of Euclidean proof
that our analysis provides can support these other lines of inquiry.
We return to a discussion of these issues in
Section~\ref{conclusions:section}.

\section{The formal system $\na{E}$}
\label{formal:system:section}

\subsection{The language of $\na{E}$}
\label{language:section}

The language of $\na{E}$ is six-sorted, with sorts for points, lines,
circles, segments, angles, and areas. There are variables ranging over
the first three sorts; we use variables $a, b, c, \ldots$ to range
over points, $L, M, N, \ldots$ to range over lines, and $\alpha,
\beta, \gamma, \ldots$ to range over circles. In addition to the
equality symbol, we have the following basic relations on elements of these
sorts:
\begin{itemize}
\item $\on(a,L)$: point $a$ is on line $L$
\item $\sameside(a,b,L)$: points $a$ and $b$ are on the same side of
  line $L$
\item $\mybetween(a,b,c)$: points $a$, $b$, and $c$ are distinct and
  collinear, and $b$ is between $a$ and $c$
\item $\on(a,\alpha)$: point $a$ is on circle $\alpha$
\item $\inside(a, \alpha)$: point $a$ is inside circle $\alpha$
\item $\mycenter(a, \alpha)$: point $a$ is the center of circle
  $\alpha$
\end{itemize}
Note that $\mybetween(a,b,c)$ denotes a strict betweenness relation,
and $\sameside(a,b,L)$ entails that neither $a$ nor $b$ is on $L$.  We
also have three versions of an additional relation symbol, to keep
track of the intersection of lines and circles:
\begin{itemize}
\item $\intersects(L,M)$: line $L$ and $M$ intersect
\item $\intersects(L,\alpha)$: line $L$ intersects circle $\alpha$
\item $\intersects(\alpha, \beta)$: circles $\alpha$ and $\beta$
  intersect
\end{itemize}
In each case, by ``intersects'' we really mean ``intersects
transversally.'' In other words, two lines intersect when they have
exactly one point in common, and two lines, or a line and a circle,
intersect when they have exactly two points in common.

The objects of the last three sorts represent magnitudes. There are no
variables ranging over these sorts; instead, one obtains objects of
these sorts by applying the following functions to points:
\begin{itemize}
\item $\segment(a,b)$: the length of the line segment from $a$ to $b$,
  written $\seg{ab}$
\item $\myangle(a, b, c)$: the magnitude of the angle $abc$, written
  $\angle abc$
\item $\myarea(a, b, c)$: the area of triangle $abc$, written $\area{abc}$
\end{itemize}
In addition to the equality relation, we have an addition function,
$+$, a less-than relation, $<$, and a constant, $0$, on each magnitude
sort. Thus, for example, the expression $\seg{ab} = \seg{cd}$ denotes
that the line segment determined by $a$ and $b$ is congruent to the
line segment determined by $c$ and $d$, and $\seg{ab} < \seg{cd}$
denotes that it is strictly shorter. The symbol $0$ is included for
convenience; we could have, in a manner more faithful to Euclid, taken
magnitudes to be strictly positive, with only minor modifications to
the axioms and rules of inference described below. Finally, we also
include a constant, ``$\rightangle$,'' of the angle sort. Thus we model
the statement ``$abc$ is a right angle'' as $\angle abc =
\rightangle$.

The assertion ``$\mybetween(a,b,c)$'' is intended to denote that $b$
is \emph{strictly} between $a$ and $c$, which is to say, it implies
that $b$ is not equal to either $a$ or $c$. In
Section~\ref{completeness:section}, we will see that, in this respect,
it differs from the primitive used by Tarski in his axiomatization of
Euclidean geometry. One reason that we have chosen the strict version
is that it seems more faithful to Euclidean practice; see the
discussion in Section~\ref{nondegeneracy:section}. Another is that it
seems to have better computational properties; see
Section~\ref{implementation:section}.

The atomic formulas are defined as usual. A \emph{literal} is an
atomic formula or a negated atomic formula. We will sometimes refer to
literals as ``assertions,'' since, as we have noted, statements found
in proofs in the \emph{Elements} are generally of this form (or, at
most, conjunctions of such basic assertions). Literals involving the
relations on the first three sorts are ``diagrammatic assertions,''
and literals involving the relations on the last three sorts are
``metric assertions.''

Additional predicates can be defined in terms of the basic ones
presented here. For example, we can take the assertion $\seg{ab} \leq
\seg{cd}$ to be shorthand for $\lnot (\seg{cd} <
\seg{ab})$. Similarly, we can assert that $a$ and $b$ are on different
sides of a line $L$, written $\diffside(a,b,L)$, by making the
sequence of assertions $\lnot \on(a,L), \lnot \on(b,L), \lnot
\sameside(a,b,L)$.  Similarly, we can define $\outside(a,\alpha)$ to
be the conjunction $\lnot \inside(a,\alpha), \lnot \on(a,
\alpha)$. Definitional extensions like these are discussed in
Section~\ref{formal:language:section}.

It is worth mentioning, at this point, that diagrammatic assertions
like ours rarely appear in the text of Euclid's proofs. Rather, they
are implicitly the result of diagrammatic hypotheses and construction
steps, and they, in turn, license further construction steps and
deductive inferences. But this fact \emph{is} adequately captured by
$\na{E}$: even though raw diagrammatic assertions \emph{may} appear in
proofs, the rules are designed so that typically they do not have to.
Consider, for example, the example in
Section~\ref{generality:section}. In our system, the construction
``let $e$ be the point of intersection of $L$ and $M$'' is licensed by
the diagrammatic assertion $\intersects(L,M)$, which, in turn, is
licensed by the fact that $M$ contains two points, $c$ and $d$, that
are on opposite sides of $L$. But we will take the assertion
$\intersects(L,M)$ to be a direct consequence of diagrammatic assertions
that result from the construction, which allows this fact to
license the construction step without explicit mention. And once $e$
has been designated the point of intersection, the fact that $e$ is
between $c$ and $d$ is another direct consequence of the diagram
assertions in play, and hence can be used to license future
constructions and metric assertions. We discuss the relationship
between our formal language and the informal text of the
\emph{Elements} in more detail in Section
\ref{formal:language:section}.

\subsection{Proofs in $\na{E}$}
\label{proofs:section}

Theorems in $\na{E}$ have the following logical form:
\[
\fa {\vec a, \vec L, \vec \alpha} (\ph(\vec a, \vec L, \vec \alpha)
\limplies \ex {\vec b, \vec M, \vec \beta} \psi(\vec a, \vec b, \vec
L, \vec M, \vec \alpha, \vec \beta)),
\]
where $\ph$ is a conjunction of literals, and $\psi$ is either a
conjunction of literals or the symbol $\bot$, for ``falsity'' or
``contradiction.'' Put in words, theorems in $\na{E}$ make statements
of the following sort:
\begin{quote}
  Given a diagram consisting of some points, $\vec a$, some lines,
  $\vec L$, and some circles, $\vec \alpha$, satisfying assertions
  $\ph$, one can construct points $\vec b$, lines $\vec M$, and
  circles $\vec \beta$, such that the resulting diagram satisfies
  assertions $\psi$.
\end{quote}
If the list $\vec b, \vec M, \vec \beta$ is nonempty, the theorem is a
``problem,'' in Proclus' terminology. If that list is empty and $\psi$
is not $\bot$, we have a ``theorem,'' in Proclus' sense. If $\psi$ is
$\bot$, the theorem asserts the impossibility of the configuration
described by $\ph$.

In our proof system, we will represent a conjunction of literals by
the corresponding set of literals, and the initial universal
quantifiers will be left implicit. Thus, theorems in our system will
be modeled as sequents of the form
\[
\Gamma \Rightarrow \ex{\vec b, \vec M,
  \vec \beta.} \Delta,
\]
where $\Gamma$ and $\Delta$ are sets of literals, and $\vec b, \vec M,
\vec \beta$ do not occur in $\Gamma$. Assuming the remaining variables
in $\Gamma$ and $\Delta$ are among $\vec a, \vec L, \vec \alpha$, the
interpretation of the sequent is as above: given objects $\vec a, \vec
L, \vec \alpha$ satisfying the assertions in $\Gamma$, there are
objects $\vec b, \vec M, \vec \beta$ satisfying the assertions in
$\Delta$.

As is common in the proof theory literature, if $\Gamma$ and $\Gamma'$
are finite sets of literals and $\ph$ is a literal, we will use
$\Gamma, \Gamma'$ to abbreviate $\Gamma \cup \Gamma'$ and $\Gamma,
\ph$ to abbreviate $\Gamma \cup \{ \ph \}$. Beware, though: in the
literature it is more common to read sets on the right side of a
sequent arrow disjunctively, rather than conjunctively, as we do. Thus
the sequent above corresponds to the single-succedent sequent $\Gamma
\Rightarrow \ex{\vec b, \vec M, \vec \beta} (\bigwedge \Delta)$ in a
standard Gentzen calculus for first-order logic.

Having described the theorems in our system, we now describe the
proofs. As noted in Section~\ref{logical:section}, there are two sorts
of steps in a Euclidean proof: construction steps, which introduce new
objects into the diagram, and deduction steps, which infer facts about
objects that have already been introduced. Thus, after setting forth
the hypotheses, a typical Euclidean proof might have the following
form:
\begin{quote}
Let $a$ be a point such that \ldots \\
Let $b$ be a point such that \ldots \\
Let $L$ be a line such that \ldots \\
\ldots \\
Hence \ldots \\
Hence \ldots \\
Hence \ldots
\end{quote}
Application of a previously proved theorem fits into this framework:
if the theorem is a ``problem,'' in Proclus' terminology, applying it
is a construction step, while if it is a ``theorem,'' applying it is a
demonstration step. The linear format is occasionally broken by a
proof by cases or a proof by contradiction, which temporarily
introduces a new assumption. For example, a proof by cases might have
the following form:
\begin{quote}
Suppose $A$. \\
\hspace*{10pt} Hence \ldots \\
\hspace*{10pt} Hence \ldots \\
\hspace*{10pt} Hence $B$. \\
On the other hand, suppose not $A$. \\
\hspace*{10pt} Hence \ldots \\
\hspace*{10pt} Hence \ldots \\
\hspace*{10pt} Hence $B$. \\
Hence $B$.
\end{quote}

Proofs in $\na{E}$ can be represented as sequences of assertions in
this way, where the validity of the assertion given at any line in the
proof depends on the hypotheses of the theorem, as well as any
temporary assumptions that are in play. Below, however, we will adopt
conventional proof-theoretic notation, and take each line of the proof
to be a sequent $\Gamma \Rightarrow \ex{\vec x.}  \Delta$, where
$\Gamma$ represents all the assumptions that are operant at that stage
of the proof, $\vec x$ represent all the geometric objects that have
been introduced, and $\Delta$ represents all the conclusions that have
been drawn.

Thus, in our formal presentation of the proof system, a single
construction step involves passing from a sequent of the form $\Gamma
\Rightarrow \ex{\vec x.} \Delta$ to a sequent of the form $\Gamma
\Rightarrow \ex{\vec x, \vec y.}  \Delta, \Delta'$, where $\vec y$ are
variables for points, lines, and/or circles that do not occur in the
original sequent. That is, the step asserts the existence of the new
objects, $\vec y$, with the properties asserted by $\Delta'$. In
contrast, demonstration steps pass from a sequent of the form $\Gamma
\Rightarrow \ex{\vec x.} \Delta$ to one of the form $\Gamma
\Rightarrow \ex{\vec x.} \Delta, \Delta'$, without introducing new
objects. These include:
\begin{itemize}
\item Diagrammatic inferences: here $\Delta'$ consists of a direct
  diagrammatic consequence of diagrammatic assertions in $\Gamma,
  \Delta$;
\item Metric inferences: here $\Delta'$ consists of a direct metric
  consequence of metric assertions in $\Gamma, \Delta$; and
\item Transfer inferences: here $\Delta'$ consists of a metric or
  diagrammatic assertion that can be inferred from metric and
  diagrammatic diagrammatic assertions in $\Gamma, \Delta$.
\end{itemize}
We will describe these inferences in detail in the sections that
follow.

We have already noted that applying a previously proved theorem may or
may not introduce new objects. Suppose we have proved a theorem of the
form $\Pi \Rightarrow \ex{\vec y.} \Theta$, and we are at a stage in
our proof where we have established the sequent $\Gamma \Rightarrow
\ex{\vec x.} \Delta$. The first theorem, that is, the hypotheses in
$\Pi$, may concern a right triangle $abc$, whereas we may wish to
apply it to a right triangle $def$. Thus, the inference may require
renaming the variables of the first theorem.  Furthermore, we may wish
to extract only some of the conclusions of the theorem, and discard
the others.  Applying such a theorem, formally, involves doing the
following:
\begin{itemize}
\item renaming the variables of $\Pi \Rightarrow \ex{\vec y.} \Theta$
  to obtain a sequent $\Pi' \Rightarrow \ex{\vec y'.} \Theta'$, so
  that all the free variables of that sequent are among the variables
  of $\Gamma, \Delta$, and the variables $\vec y'$ do not occur in
  $\Gamma, \Delta$;
\item checking that every element of $\Pi'$ is a direct diagram or
  metric consequence of $\Gamma, \Delta$;
\item selecting some subset $\Delta'$ of the conclusions $\Theta'$ and
  the sublist $\vec z$ of variables from among $\vec y'$ that occur in
  $\Theta'$;
\item and then concluding the sequent $\Gamma \Rightarrow \ex{\vec x,
    \vec z.} \Delta, \Delta'$.
\end{itemize}
In words, suppose that, assuming that some geometric objects satisfy
the assertions $\Gamma$, we have constructed objects $\vec x$
satisfying $\Delta$. Suppose, further, that, by a previous theorem,
the assertions in $\Gamma$ and $\Delta$ imply the existence of new
objects $\vec z$ satisfying $\Delta'$. Then we can introduce new
objects $\vec z$, satisfying the assertions in $\Delta'$.

We also adhere to the common proof-theoretic practice of representing
our proofs as trees rather than sequences, where the sequent at each
node is inferred from sequents at the node's immediate predecessors.
For the most part, trees will be linear, in the sense that each node
has a single predecessor. The only exceptions arise in a proof by
cases or a proof by contradiction. In the first case, one can
establish a conclusion using a case split on atomic formulas. Such a
proof has the following form:
\begin{prooftree}
\AXM{\Gamma \Rightarrow \ex{\vec x.} \Delta}
\AXM{\Gamma, \Delta, \ph \Rightarrow \ex{\vec y.} \Delta'}
\AXM{\Gamma, \Delta, \lnot \ph \Rightarrow \ex{\vec y.} \Delta'}
\TIM{\Gamma \fCenter \ex{\vec x, \vec y.} \Delta,\Delta'}
\end{prooftree}
In words, suppose that, given geometric objects satisfying the
assertions $\Gamma$, we have constructed objects $\vec x$ satisfying
$\Delta$. Suppose, further, that given objects satisfying $\Gamma$ and
$\Delta$, we can construct additional objects $\vec y$ satisfying
$\Delta'$, whether or not $\ph$ holds. Then, given geometric objects
satisfying the assertions $\Gamma$, we can obtain objects $\vec x,
\vec y$ satisfying the assertions in $\Delta, \Delta'$. 

Recall that we have included the symbol $\bot$, or ``contradiction,''
among our basic atomic assertions. Since the rules described below
allow one to infer anything from a contradiction, we can use case
splits to simulate proof by contradiction, as follows. Suppose,
assuming $\lnot \ph$, we establish $\bot$. Then from $\lnot \ph$ we
can establish $\ph$. Since $\ph$ certainly follows from $\ph$, we have
shown that $\ph$ follows in any case.

Finally, we need to model two ``superposition'' inferences used by
Euclid in Propositions 4 and 8 of Book I, to establish the familiar
``side-side-side'' and ``side-angle-side'' criteria for triangle
congruences. The interpretation of these rules has been an ongoing
topic of discussion for Euclid's commentators (see
Heath~\cite[pp.224--228,249--250]{euclid}, Mancosu
\cite[pp.~28--33]{mancosu:96}, or Mueller
\cite[pp.~21--24]{mueller:81}). But the inferences have a very natural
modeling in our system, described in
Section~\ref{superposition:section} below.

A proof that ends with the sequent $\Gamma \Rightarrow \ex {\vec x'.}
\Delta'$ constitutes a proof of $\Gamma \Rightarrow \ex {\vec x.}
\Delta$ exactly when there is a map $f$ from $\vec x$ to the variables
of $\Gamma, \Delta'$ such that, under the renaming, every element of
$\Delta^f$ is contained in or a diagrammatic consequence of $\Delta'$.
In other words, we have succeeded in proving the theorem when we have
constructed the requisite objects and shown that they have the claimed
properties.\footnote{Note that the function $f$ can map an
  existentially quantified variable in $\vec x$ to one of the
  variables in $\Gamma$. This means that the theorem ``assuming
  $p$ is on $L$, there is a point $q$ on $L$'' has the trivial proof:
  ``assuming $p$ is on $L$, $p$ is on $L$.'' 
  
  We are, however, glossing over some technical details concerning the
  usual treatment of bound variables and quantifiers. For example,
  technically, we should require that no variable of $\Gamma$ conflict
  with the bound variables $\vec x$ of the sequent. It may be
  convenient to assume that we simply use separate stocks of variables
  for free (implicitly universally quantified) variables and bound
  (existentially quantified) variables. Or, better, one should
  construe all our claims as holding ``up to renaming of bound
  variables.''}
  
We claim that our formal system captures all the essential features of
the proofs found in Books I to IV of the \emph{Elements}. To be more
precise, the \emph{Elements} includes a number of more complicated
inferences that are easily modeled in terms of our basic rules.  To
start with, Euclid often uses more elaborate case splits than the ones
defined above, for example, depending on whether one segment is
shorter than, the same length as, or longer than another.  This is
easily represented in our system as a sequence of two case splits.
Also, Euclid often implicitly restricts attention to one case, without
loss of generality, where the treatment of the other is entirely
symmetric.  Furthermore, we have focused on triangles; the handling of
convex figures like rectangles and their areas can be reduced to these
by introducing defined predicates. In
Section~\ref{formal:language:section}, we describe some of the ways
that ``syntactic sugar'' could be used to make $\na{E}$'s proofs even
more like Euclid's. Thus a more precise formulation of our claim is
that if we use a suitable textual representation of proofs, then,
modulo syntactic conventions like these, proofs in our formal system
look very much like the informal proofs found in the
\emph{Elements}.\footnote{\label{case:footnote} The manner of
  presenting proofs used above, whereby suppositional reasoning is
  indicated by indenting or otherwise setting off subarguments,
  amounts to the use of what are known as ``Fitch diagrams.''

  Since the objects constructed to satisfy the conclusion of a proof
  by cases can depend on the case, we have glossed over details as to
  how our formal case splits should be represented in Fitch-style
  proofs. But see the second example in
  Section~\ref{technical:section} for one way of doing this.} Some
examples are presented in Section~\ref{examples:section} below to help
substantiate this claim.  Some ways in which proofs in our system
depart substantially from the text of the \emph{Elements} are
discussed in Section~\ref{departures:section}.

To complete our description of $\na{E}$, we now need to describe:
\begin{enumerate}
\item the construction rules, 
\item the diagrammatic inferences,
\item the metric inferences, 
\item the diagram-metric transfer inferences, and
\item the two superposition inferences.
\end{enumerate}
These are presented in
Sections~\ref{construction:rules:section}--\ref{superposition:section}.
The diagrammatic inferences, metric inferences, and diagram-metric
transfer inferences will be presented as lists of first-order axioms,
whereas what we really mean is that in a proof one is allowed to
introduce any ``direct consequence'' of those axioms. This requires us
to spell out a notion of ``direct consequence,'' which we do in
Section~\ref{direct:section}. In the meanwhile, little harm will come
of thinking of the direct consequences as being the assertions that
are first order consequences of the axioms, together with the
assertions in $\Gamma, \Delta$.

\subsection{Construction rules}
\label{construction:rules:section}

In this section, we present a list of construction rules for $E$.
Formally, these are described by sequents of the form $\Pi \Rightarrow
\ex {\vec x.} \Theta$, where the variables $\vec x$ do not appear in
$\Pi$. Applying such a construction rule means simply applying this
sequent as a theorem, as described in the last section. In other
words, one can view our construction rules as a list of ``built-in''
theorems that are available from the start. Intuitively, $\vec x$ are
the objects that are constructed by the rule; $\Pi$ are the
preconditions that guarantee that the construction is
possible,\footnote{The conditions that are prerequisite to a
  construction are called the \emph{diarismos} by Proclus; see
  \cite[Book I, p.~130]{euclid} or \cite[p.~160]{Proclus}.} and
$\Theta$ are the properties that characterize the objects that are
constructed.

We pause to comment on our terminology. What the rules below have in
common is that they serve to introduce new objects to the diagram.
Sometimes an object that is introduced is uniquely determined, as is
the case, for example, with the rule ``let $a$ be the intersection of
$L$ and $M$.'' In other cases, there is an arbitrary choice involved,
as is the case with the rule ``let $a$ be a point on $L$''. We are
referring to both as ``construction rules,'' though one might object
that picking a point is not really a ``construction.'' It might be
more accurate to describe them as ``rules that introduce new objects
into the diagram,'' but we have opted for the shorter locution. Our
choice is made reasonable by the fact that the rules are all
\emph{components} of Euclidean constructions. Insofar as picking a
point $c$ and connecting it to two points $a$ and $b$ can be seen as
``constructing a triangle on the segment $ab$,'' it is reasonable to
call the rule that allows one to pick $c$ a ``construction rule.''

For readability, the sequents are described informally. First, we
provide a natural-language description of the construction, such as
``let $a$ be a point on $L$.'' This is followed by a more precise
specification of the prerequisites to the construction (corresponding
to $\Pi$ in the sequent $\Pi \Rightarrow \ex {\vec x.} \Theta$), and
the conclusion (corresponding to $\Theta$). Furthermore, when one
constructs a point on a line, for example, one has the freedom to
choose such a point distinct from any of the other points already in
the diagram, and to specify that it does not lie on various lines and
circles. The ability to do so is indicated by the optional ``[distinct
from \ldots]'' clause; for example, assuming the lines $L$ and $M$ do
not coincide, one can say ``let $a$ be a point on $L$, distinct from
$b$, $M$, and $\alpha$.'' As noted in
Section~\ref{nondegeneracy:section}, both the ability to specify, and
the requirement of specifying, such ``distinctness'' conditions marks
a departure from Euclid. In the presentation of the construction rules
below, such conditions are abbreviated ``[distinct from \ldots].''
Similarly, the requirement that $L$ be distinct from all the other
lines mentioned is abbreviated ``[$L$ is distinct from lines
\ldots],'' and so on. So the example we just considered is an instance
of the second rule on the list that follows, and becomes
\[
L \neq M \Rightarrow \ex{a.} \on(a,L), a \neq b, \lnot \on(a, M),
\lnot \on(a, \alpha)
\]
when expressed in sequent form.

\bigskip

\noindent {\bf Points}

\begin{enumerate}
\item Let $a$ be a point [distinct from \ldots].\\
Prerequisites: none\\
Conclusion: [$a$ is distinct from\ldots]

\item Let $a$ be a point on $L$ [distinct from \ldots].\\
  Prerequisites: [$L$ is distinct from lines\ldots]\\
  Conclusion: $a$ is on $L$, [$a$ is distinct from\ldots]

\item Let $a$ be a point on $L$ between $b$ and $c$ [distinct from
  \ldots]. \\
  Prerequisites: $b$ is on $L$, $c$ is on $L$, $b \neq c$, [$L$ is
  distinct from lines \ldots] \\
  Conclusion: $a$ is on $L$, $a$ is between $b$ and $c$, [$a$ is distinct
  from\ldots] 

\item Let $a$ be a point on $L$ extending the segment from $b$ to $c$
  [with $a$ distinct from\ldots]. \\
  Prerequisites: $b$ is on $L$, $c$ is on $L$, $b \neq c$, [$L$ is
  distinct from lines \ldots] \\
  Conclusion: $a$ is on $L$, $c$ is between $b$ and $a$, [$a$ is distinct
  from\ldots]

\item Let $a$ be a point on the same side of $L$ as $b$ [distinct
  from\ldots] \\
  Prerequisite: $b$ is not on $L$ \\
  Conclusion: $a$ is on the same side of $L$ as $b$, [$a$ is distinct
  from\ldots]

\item Let $a$ be a point on the side of $L$ opposite $b$ [distinct
  from\ldots] \\
  Prerequisite: $b$ is not on $L$. \\
  Conclusion: $a$ is not on $L$, $a$ is on the same side of $L$ as
  $b$, [$a$ is distinct from\ldots]

\item Let $a$ be a point on $\alpha$ [distinct from \ldots]. \\
  Prerequisite: [$\alpha$ is distinct from other circles] \\
  Conclusion: $a$ is on $\alpha$, [$a$ is distinct from\ldots]

\item Let $a$ be a point inside $\alpha$ [distinct from \ldots]. \\
  Prerequisites: none \\
  Conclusion: $a$ is inside $\alpha$, [$a$ is distinct from\ldots]

\item Let $a$ be a point outside $\alpha$ [distinct from \ldots]. \\
  Prerequisites: none \\
  Conclusion: $a$ is outside $\alpha$, [$a$ is distinct from\ldots]

\end{enumerate}

\bigskip

\noindent {\bf Lines and circles} \nopagebreak[4]

\begin{enumerate}

\item Let $L$ be the line through $a$ and $b$. \\
  Prerequisite: $a \neq b$ \\
  Conclusion: $a$ is on $L$, $b$ is on $L$
  
\item Let $\alpha$ be the circle with center $a$ passing
  through $b$. \\
  Prerequisite: $a \neq b$ \\
  Conclusion: $a$ is the center of $\alpha$, $b$ is on $\alpha$

\end{enumerate}

\bigskip

To make sense of the next list of constructions, recall that we are
using the word ``intersect'' to refer to transversal intersection. For
example, saying that two circles intersect means that they meet in
exactly two distinct points.

\bigskip

\noindent {\bf Intersections}

\begin{enumerate}

\item Let $a$ be the intersection of $L$ and $M$. \\
  Prerequisite: $L$ and $M$ intersect \\
  Conclusion: $a$ is on $L$, $a$ is on $M$

\item Let $a$ be a point of intersection of $\alpha$ and $L$. \\
  Prerequisite: $\alpha$ and $L$ intersect \\
  Conclusion: $a$ is on $\alpha$, $a$ is on $L$

\item Let $a$ and $b$ be the two points of intersection of $\alpha$
  and $L$. \\
  Prerequisite: $\alpha$ and $L$ intersect \\
  Conclusion: $a$ is on $\alpha$, $a$ is on $L$, $b$ is on $\alpha$,
  $b$ is on $L$, $a \neq b$

\item Let $a$ be the point of intersection of $L$ and $\alpha$ between
  $b$ and $c$. \\
  Prerequisites: $b$ is inside $\alpha$, $b$ is on $L$, $c$ is not
  inside $\alpha$, $c$ is not on $\alpha$, $c$ is on $L$ \\
  Conclusion: $a$ is on $\alpha$, $a$ is on $L$, $a$ is between $b$
  and $c$

\item Let $a$ be the point of intersection of $L$ and $\alpha$
  extending the segment from $c$ to $b$. \\
  Prerequisites: $b$ is inside $\alpha$, $b$ is on $L$, $c \neq b$,
  $c$ is on $L$. \\
  Conclusion: $a$ is on $\alpha$, $a$ is on $L$, $b$ is between $a$
  and $c$

\item Let $a$ be a point on the intersection of $\alpha$ and
  $\beta$. \\
  Prerequisite: $\alpha$ and $\beta$ intersect \\
  Conclusion: $a$ is on $\alpha$, $a$ is on $\beta$

\item Let $a$ and $b$ be the two points of intersection of $\alpha$
  and $\beta$. \\
  Prerequisite: $\alpha$ and $\beta$ intersect \\
  Conclusion: $a$ is on $\alpha$, $a$ is on $\beta$, $b$ is on
  $\alpha$, $b$ is on $\beta$, $a \neq b$

\item Let $a$ be the point of intersection of $\alpha$ and $\beta$,
  on the same side of $L$ as $b$, where $L$ is the line through their
  centers, $c$ and $d$, respectively. \\
  Prerequisites: $\alpha$ and $\beta$ intersect, $c$ is the center of
  $\alpha$, $d$ is the center of $\beta$, $c$ is on $L$, $d$ is on
  $L$, $b$ is not on $L$ \\
  Conclusion: $a$ is on $\alpha$, $a$ is on $\beta$, $a$ and $b$ are
  on the same side of $L$

\item Let $a$ be the point of intersection of $\alpha$ and $\beta$,
  on the side of $L$ opposite $b$, where $L$ is the line through their
  centers, $c$ and $d$, respectively. \\
  Prerequisite: $\alpha$ and $\beta$ intersect, $c$ is the center of
  $\alpha$, $d$ is the center of $\beta$, $c$ is on $L$, $d$ is on
  $L$, $b$ is not on $L$ \\
  Conclusion: $a$ is on $\alpha$, $a$ is on $\beta$, $a$ and $b$ are
  not on the same side of $L$, $a$ is not on $L$.

\end{enumerate}

\showdiagram{
\begin{figure}
\large
\begin{center}
\psfrag{c}{$\alpha$}\psfrag{d}{$\beta$}\psfrag{p}{$a$}\psfrag{q}{$b$}\psfrag{r}{$c$}\psfrag{s}{$d$}
\includegraphics[height=2.5cm]{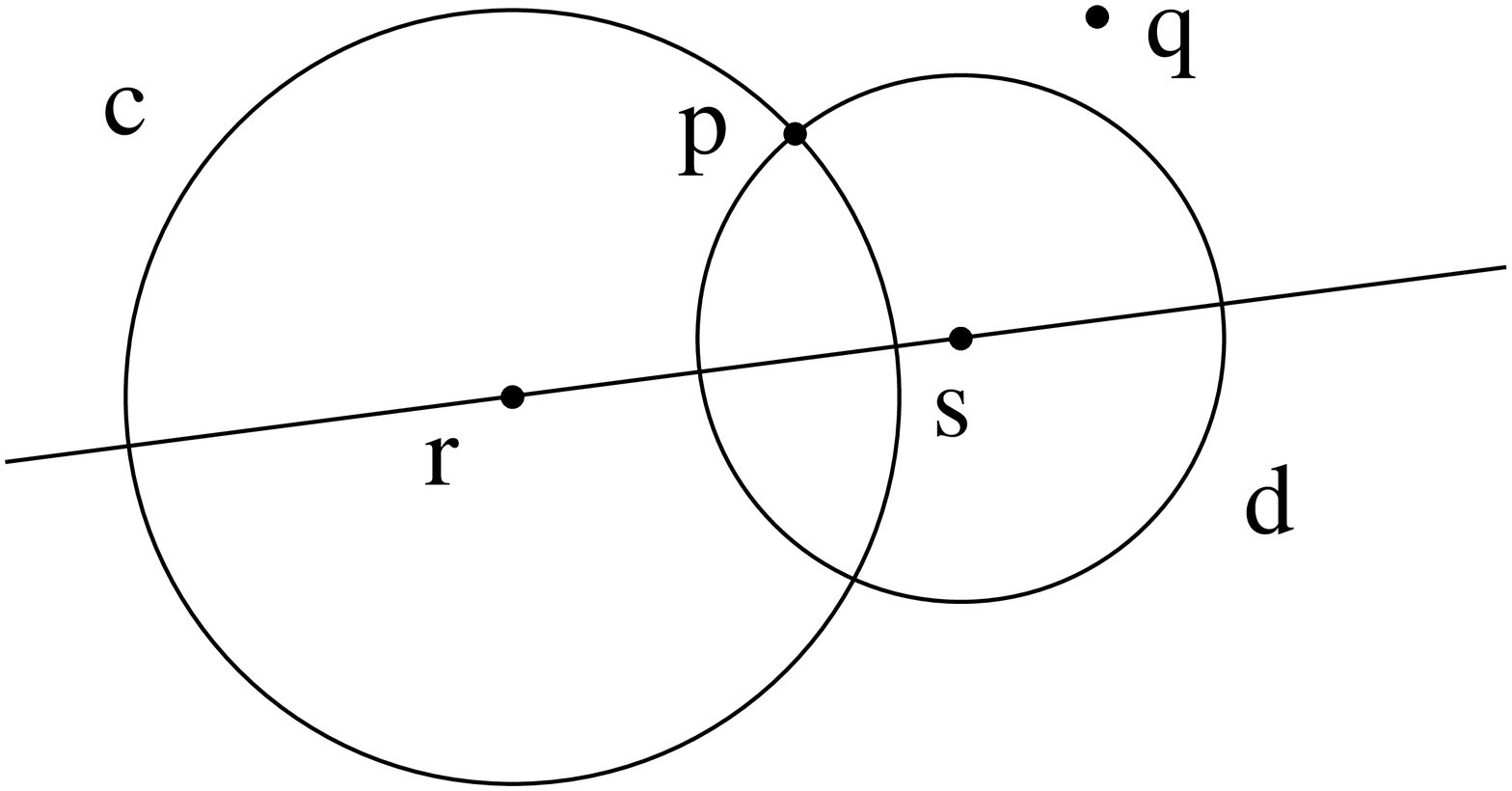}
\hspace*{0.3in}
\psfrag{c}{$\alpha$}\psfrag{d}{$\beta$}\psfrag{p}{$a$}\psfrag{q}{$b$}\psfrag{r}{$c$}\psfrag{s}{$d$}
\includegraphics[height=2.5cm]{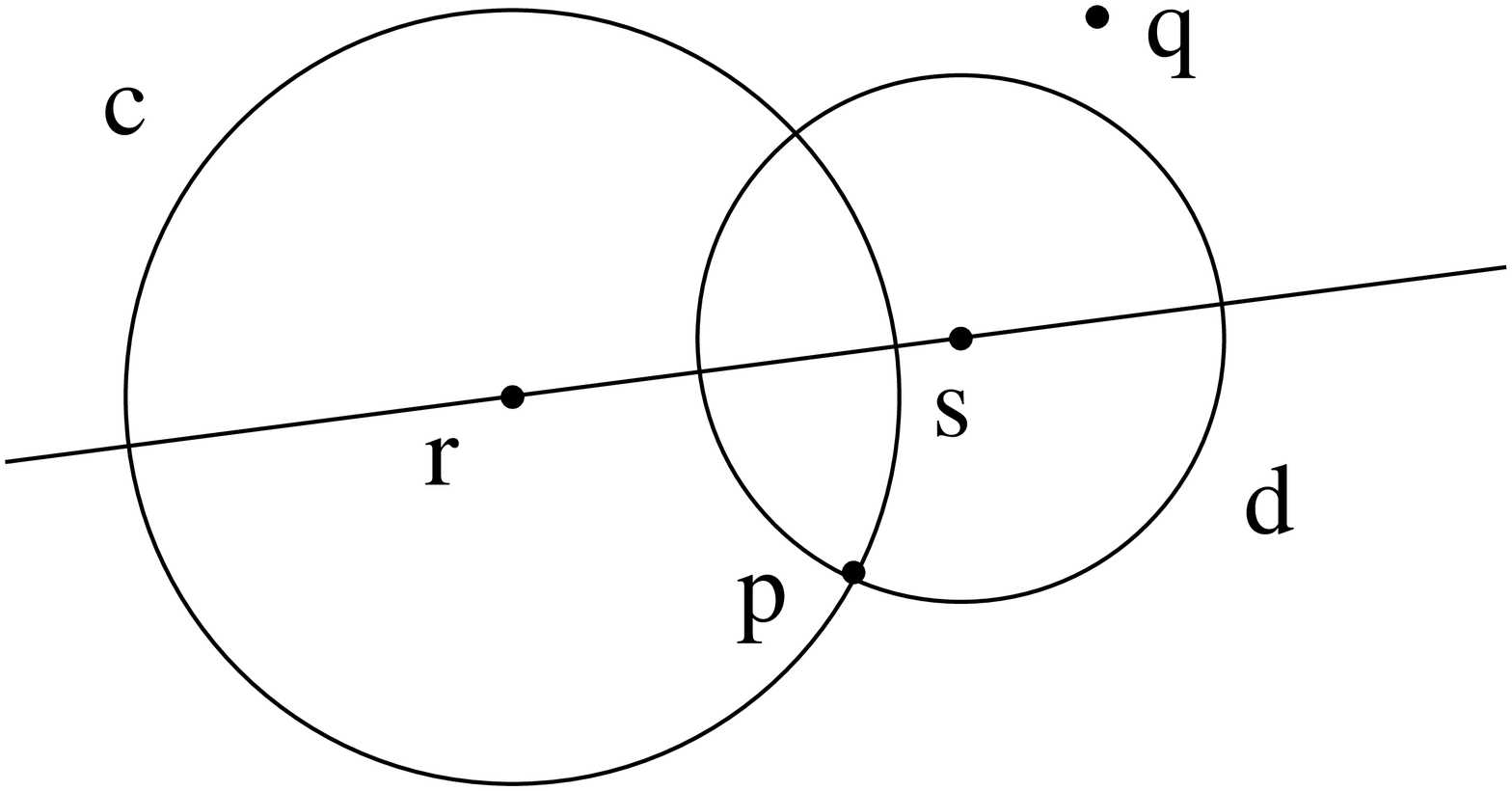}
\end{center}
\caption{Diagrams for intersection rules 8 (left) and 9 (right). In
  the first, the added intersection point $a$ is on the same side of
  $L$ as $b$; in the second, it is opposite $b$.}
\end{figure}
}

We close this section by noting that there is some redundancy in our
construction rules. For example, the circle intersection rules 8 and
9, which are somewhat complex, could be derived as \emph{theorems}
from the more basic rules. As we will see below, we have added them to
model particular construction steps in the \emph{Elements}. But there
are other constructions that can be derived in our system, that seem
no less obvious; for example, if $M$ and $N$ are distinct lines that
intersect, and $a$ is not on $N$, then one can pick a point $b$ on $M$
on the same side of $N$ as $a$. We did not include this rule only
because we did not find it in Euclid, though constructions like this
come up in our completeness proof, in
Section~\ref{completeness:section}.

This situation is somewhat unsatisfying. Our list of construction
rules was designed with two goals in mind: first, to model the
constructions in Euclid, and, second, to provide a system that is
complete, in the sense described in
Section~\ref{completeness:section}. But a smaller set of rules would
have met the second constraint, and since the constructions appearing
in Books I to IV of the \emph{Elements} constitute a finite list, the
first constraint could be met by brute-force enumeration.  What is
missing is a principled determination of what should constitute an
``obvious'' construction, as opposed to an existence assertion that
requires explicit proof.

We did, at one point, consider allowing the prover to introduce any
point satisfying constraints that are consistent with the current
diagram. Even for diagrams without circles, however, determining
whether such a list of constraints meets this criterion seems to be a
knotty combinatorial problem. And since circles can encode metric
information, in that case the proposal seems to allow users to do
things that are far from obvious. In any event, it is not clear that
this proposal comes closer to characterizing what we should take as
``obvious constructions.'' We therefore leave this task as an open
conceptual problem, maintaining only that the list of constructions we
have chosen here are (1) obviously sound, in an informal sense; (2)
sufficient to emulate the proofs in Books I to IV of the
\emph{Elements}; (3) sound for the intended semantics; and (4)
sufficient to make the system complete.

\subsection{Diagrammatic inferences}
\label{diagram:section}

We now provide a list of axioms that allow us to infer diagrammatic
assertions from the diagrammatic information available in a given
context in a proof. For the moment, these can be read as first-order
axioms; the precise sense in which they can be used to license
inferences in $\na{E}$ is spelled out in Section~\ref{direct:section}.

\bigskip

\noindent {\bf Generalities}

\nopagebreak

\begin{enumerate}
\item If $a \neq b$, $a$ is on $L$, and $b$
  is on $L$, $a$ is on $M$ and $b$ is on $M$, then $L = M$.
\item If $a$ and $b$ are both centers of $\alpha$ then $a = b$.
\item If $a$ is the center of $\alpha$ then $a$ is inside $\alpha$.
\item If $a$ is inside $\alpha$, then $a$ is not on $\alpha$. 
\end{enumerate}

\noindent The first axiom above says that two points determine a line.
It is logically equivalent to the assertion that the intersection of
two distinct lines, $L$ and $M$, is unique. The next two axioms tell
us that the center of a circle is unique, and inside the circle. The
final axiom then rules out ``degenerate'' circles.

\bigskip

\noindent {\bf Between axioms}

\nopagebreak

\begin{enumerate}
\item If $b$ is between $a$ and $c$ then $b$ is between $c$ and $a$,
  $a \neq b$, $a \neq c$, and $a$ is not between $b$ and $c$.
\item If $b$ is between $a$ and $c$, $a$ is on $L$, and $b$ is on $L$, then
  $c$ is on $L$.
\item If $b$ is between $a$ and $c$, $a$ is on $L$, and $c$ is on $L$,
  then $b$ is on $L$.
\item If $b$ is between $a$ and $c$ and $d$ is between $a$ and $b$
  then $d$ is between $a$ and $c$. 
\item If $b$ is between $a$ and $c$ and $c$ is between $b$ and $d$
  then $b$ is between $a$ and $d$.
\item If $a$, $b$, and $c$ are distinct points on a line $L$, then
  then either $b$ is between $a$ and $c$, or $a$ is between $b$ and
  $c$, or $c$ is between $a$ and $b$.
\item If $b$ is between $a$ and $c$ and $b$ is between $a$ and $d$
  then $b$ is not between $c$ and $d$. 
\end{enumerate}

Axioms 1, 4, 5, and 6 are essentially the axioms for ``between'' given
in Krantz et al.~\cite{Kran71}, with the minor difference that we are
axiomatizing a ``strict'' notion of betweenness instead of a nonstrict
one. Krantz et al.~show that a countable set satisfies these axioms if
and only if it can be embedded as a set of points on the real line. We
remark, in passing, that it would be interesting to have similar
completeness or representation theorems for other groups of the axioms
found here. Our approach has been syntactic rather than semantic,
which is to say, our goal has been to capture certain deductive
relationships rather than to characterize classes of structures; but
it would be illuminating to understand the extent to which our various
groups of axioms give rise to natural classes of structures.

The last axiom is illustrated by the following diagram:
\showdiagram{
\begin{center}
\psfrag{p}{$a$}\psfrag{q}{$b$}\psfrag{r}{$c$}\psfrag{s}{$d$}
\includegraphics[height=0.375cm]{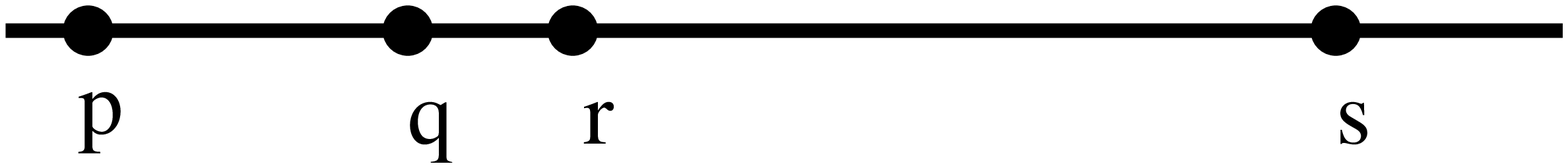}
\end{center}
} The axiom states that if $d$ and $c$ are on the same side of $b$
along a line, then $b$ does not fall between them. This axiom is, in
fact, a first-order consequence of the others; it is therefore only
useful in contexts where we consider more restrictive notions of
consequence, as we do in Section~\ref{direct:section}.

\bigskip

\noindent {\bf Same side axioms}

\begin{enumerate}
\item If $a$ is not on $L$, then $a$ and $a$ are on the same side of $L$.
\item If $a$ and $b$ are on the same side of $L$, then $b$ and $a$ are
  on the same side of $L$.
\item If $a$ and $b$ are on the same side of $L$, then $a$ is not on
  $L$. 
\item If $a$ and $b$ are on the same side of $L$, and $a$ and $c$ are
  on the same side of $L$, then $b$ and $c$ are on the same side of
  $L$. 
\item If $a$, $b$, and $c$ are not on $L$, and $a$ and $b$ are not on
  the same side of $L$, then either $a$ and $c$ are on the same side
  of $L$, or $b$ and $c$ are on the same side of $L$.
\end{enumerate}

\noindent If $L$ is a line, the axioms imply that the relation
``falling on the same side of $L$'' is an equivalence relation; and
any point $a$ not on $L$ serves to partition the points into three
classes, namely, those on $L$, those on the same side of $L$ as $a$,
and those on the opposite side of $L$ from $a$.

With the interpretation of $\diffside(p,q,L)$ described in
Section~\ref{language:section}, the axioms imply that if $a$ and $b$
are on different sides of $L$ and $a$ and $c$ are on different sides
of $L$, then $b$ and $c$ are on the same side of $L$. The axioms also
imply that if $a$ and $b$ are on the same side of $L$ and $a$ and $c$
are on different sides of $L$ then $b$ and $c$ are on different sides
of $L$.

\bigskip

\noindent {\bf Pasch axioms}

\nopagebreak

\begin{enumerate}
\item If $b$ is between $a$ and $c$ and $a$ and $c$ are on the same side of
  $L$, then $a$ and $b$ are on the same side of $L$.
\item If $b$ is between $a$ and $c$ and $a$ is on $L$ and $b$ is not
  on $L$, then $b$ and $c$ are on the same side of $L$.
\item If $b$ is between $a$ and $c$ and $b$ is on $L$ then $a$ and $c$
  are not on the same side of $L$.
\item If $b$ is the intersection of distinct lines $L$ and $M$, $a$
  and $c$ are distinct points on $M$, $a \neq b$, $c \neq b$, and $a$
  and $c$ are not on the same side of $L$, then $b$ is between $a$ and
  $c$.
\end{enumerate}

\showdiagram{
\begin{figure}

\begin{center}
\psfrag{p}{$a$}\psfrag{q}{$b$}\psfrag{r}{$c$}\psfrag{s}{$d$}\psfrag{L}{$L$}\psfrag{M}{$M$}\psfrag{N}{$N$}\psfrag{t}{$e$}
\includegraphics[height=2cm]{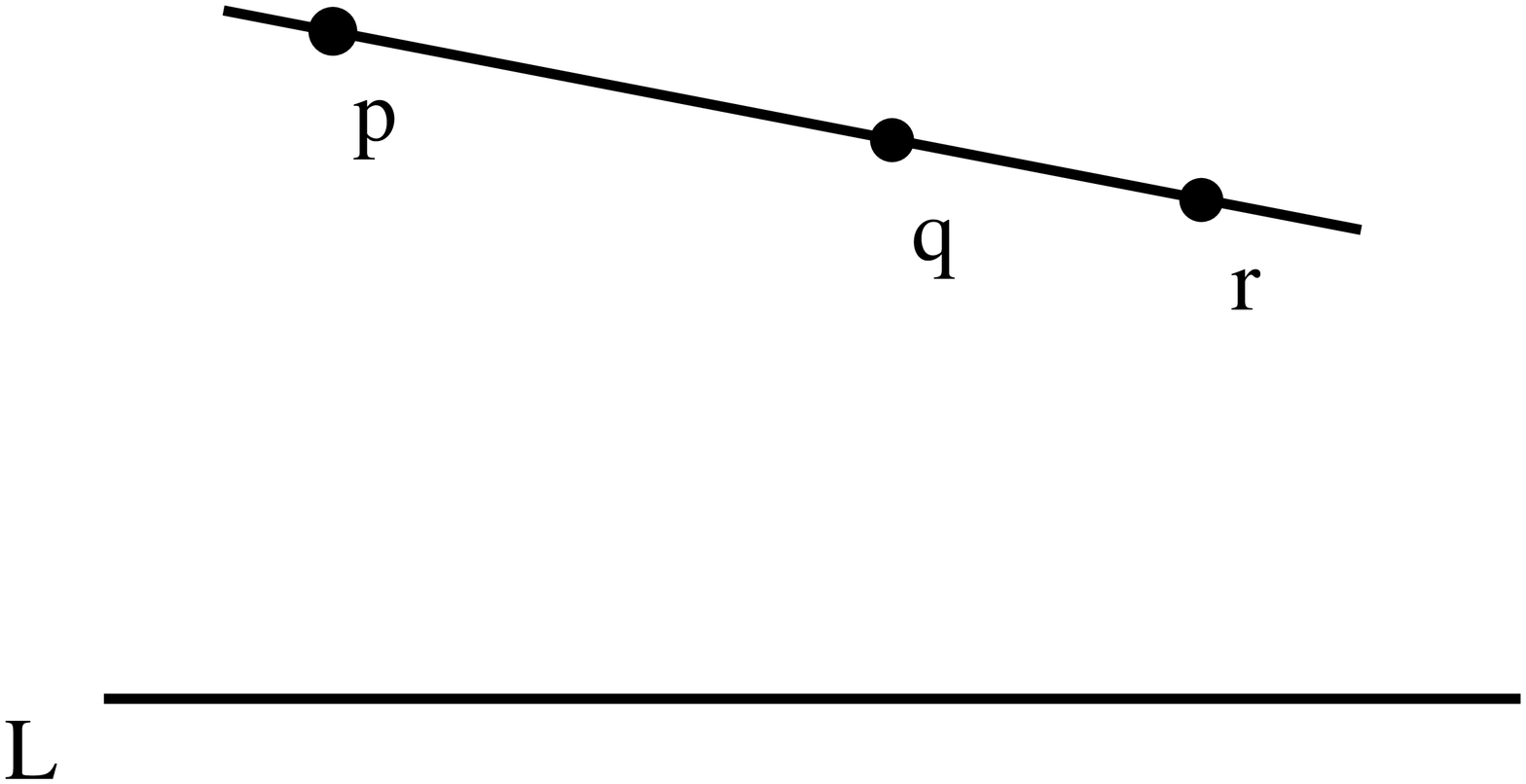}
\hspace*{0.2in}
\psfrag{p}{$a$}\psfrag{q}{$b$}\psfrag{r}{$c$}\psfrag{s}{$d$}\psfrag{L}{$L$}\psfrag{M}{$M$}\psfrag{N}{$N$}\psfrag{t}{$e$}
\includegraphics[height=2cm]{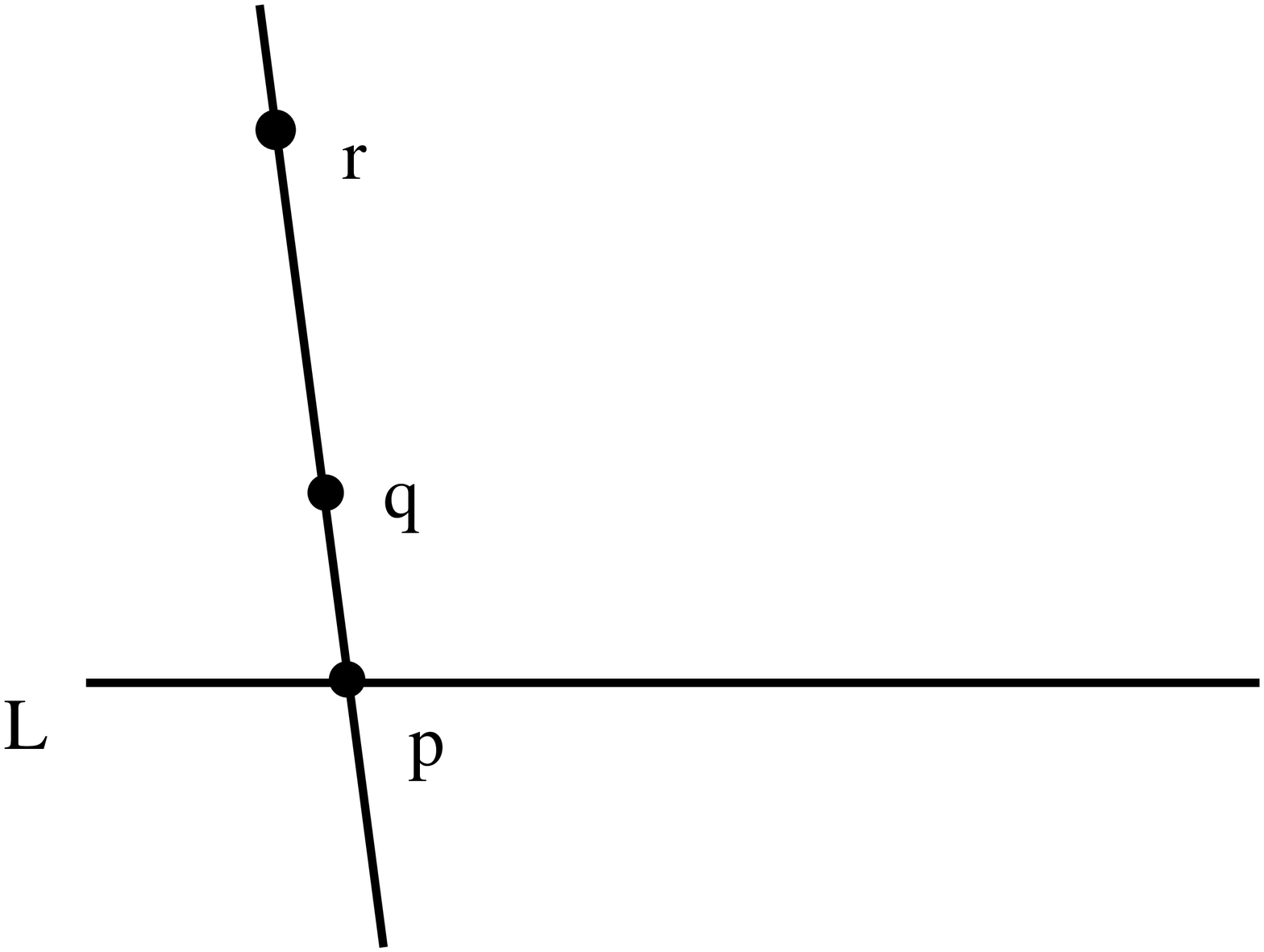}
\hspace*{0.2in}
\psfrag{p}{$a$}\psfrag{q}{$b$}\psfrag{r}{$c$}\psfrag{s}{$d$}\psfrag{L}{$L$}\psfrag{M}{$M$}\psfrag{N}{$N$}\psfrag{t}{$e$}
\includegraphics[height=2cm]{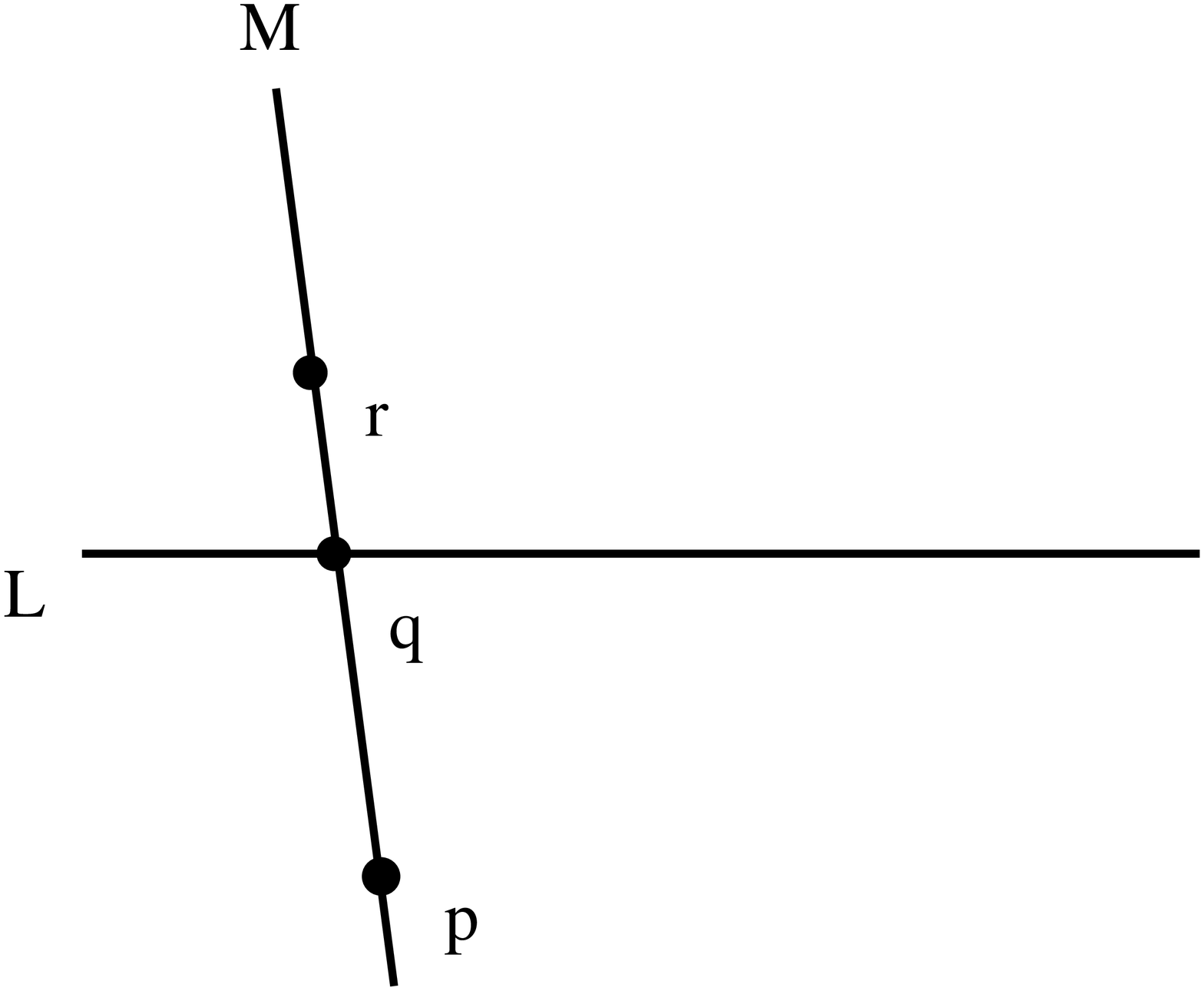}
\end{center}

\caption{Pasch rules 1 (left), 2 (center), and 3 and 4 
  (right).}
\end{figure}
}

These axioms serve to relate the ``between'' relation and the ``same
side'' relation. In the fourth axiom, ``$b$ is the intersection of
distinct lines $L$ and $M$'' should be understood as ``$L \neq M$, $b$
is on $L$, and $b$ is on $M$.''

\begin{figure}
\psfrag{a}{$a$}
\psfrag{b}{$b$}
\psfrag{c}{$c$}
\psfrag{L}{$L$}
\begin{center}
\includegraphics[height=2cm]{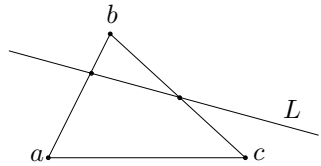}
\end{center}
\caption{the Pasch axiom}
\label{pasch:figure}
\end{figure}

In the literature, the phrase ``Pasch axiom'' is used to refer to the
assertion that a line passing through one side of a triangle
necessarily passes through one of the other two sides, or their point of
intersection (see Figure~\ref{pasch:figure}). This axiom was
indeed used by Pasch \cite{pasch:82}, and later by Hilbert
\cite{hilbert:99}, with attribution. Theorems of $\na{E}$ do not allow
disjunctive conclusions, but one can use the conclusion of Pasch's
theorem to reason disjunctively in a proof: in
Figure~\ref{pasch:figure}, either $c$ is on $L$, or on the same side
of $L$ as $a$, or on the same side of $L$ as $b$. In the second case,
where $a$ and $c$ are on the same side of $L$, our third Pasch axiom
(together with the same-side axioms) imply that $b$ and $c$ are on
opposite sides of $L$. The intersection rules below then tell us that
the line through $b$ and $c$ intersects $L$.  Our fourth Pasch axiom
then implies that this intersection is between $b$ and $c$. The third
case is handled in a similar way. We have therefore chosen the name
for this group of axioms to indicate that they provide an analysis of
the usual Pasch axiom into more basic diagrammatic rules.

\bigskip

\noindent {\bf Triple incidence axioms}

\nopagebreak

\begin{enumerate}
\item If $L$, $M$, and $N$ are lines meeting at a point $a$, and $b$,
  $c$, and $d$ are points on $L$, $M$, and $N$ respectively, and if $c$ and
  $d$ are on the same side of $L$, and $b$ and $c$ are on the same
  side of $N$, then $b$ and $d$ are not on the same side of $M$.
\item If $L$, $M$, and $N$ are lines meeting at a point $a$, and $b$,
  $c$, and $d$ are points on $L$, $M$, and $N$ respectively, and if
  $c$ and $d$ are on the same side of $L$, and $b$ and $d$ are not on
  the same side of $M$, and $d$ is not on $M$ and $b \neq a$, then $b$
  and $c$ are on the same side of $N$.
\item If $L$, $M$, and $N$ are lines meeting at a point $a$, and $b$,
  $c$, and $d$ are points on $L$, $M$, and $N$ respectively, and if $c$ and
  $d$ are on the same side of $L$, and $b$ and $c$ are on the same
  side of $N$, and $d$ and $e$ are on the same side of $M$, and $c$
  and $e$ are on the same side of $N$, then $c$ and $e$ are on the
  same side of $L$.
\end{enumerate}

\showdiagram{
\begin{figure}
\label{triple:figure}

\begin{center}
\psfrag{p}{$a$}\psfrag{q}{$b$}\psfrag{r}{$c$}\psfrag{s}{$d$}\psfrag{L}{$L$}\psfrag{M}{$M$}\psfrag{N}{$N$}\psfrag{t}{$e$}
\includegraphics[height=2.5cm]{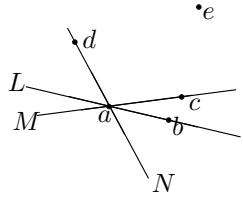}
\end{center}
\caption{Triple incidence rules. (The same diagram illustrates all
  three rules.)}
\end{figure}
}

\noindent These axioms explain how three lines
intersecting in a point divide space into regions (see
diagram~\ref{triple:figure}). 

\bigskip

\noindent {\bf Circle axioms}

\begin{enumerate}
\item If $a$, $b$, and $c$ are on $L$, $a$ is inside $\alpha$, $b$
  and $c$ are on $\alpha$, and $b \neq c$, then
  $a$ is between $b$ and $c$.
\item If $a$ and $b$ are each inside $\alpha$ or on $\alpha$, and $c$
  is between $a$ and $b$, then $c$ is inside $\alpha$.
\item If $a$ is inside $\alpha$ or on $\alpha$, $c$ is not inside
  $\alpha$, and $c$ is between $a$ and $b$, then $b$ is neither inside
  $\alpha$ nor on $\alpha$.
\item Let $\alpha$ and $\beta$ be distinct circles that intersect in 
  distinct
  points $c$ and $d$. Let $a$ be a the center of $\alpha$, let $b$ be
  the center of $\beta$, and let $L$ be the line through $a$ and $b$. 
  Then $c$ and $d$ are not on the same side of $L$.
\end{enumerate}

\showdiagram{
\begin{figure}

\begin{center}
\psfrag{c}{$\alpha$}\psfrag{d}{$\beta$}\psfrag{p}{$a$}\psfrag{q}{$b$}\psfrag{r}{$c$}\psfrag{s}{$d$}
\includegraphics[height=1.5cm]{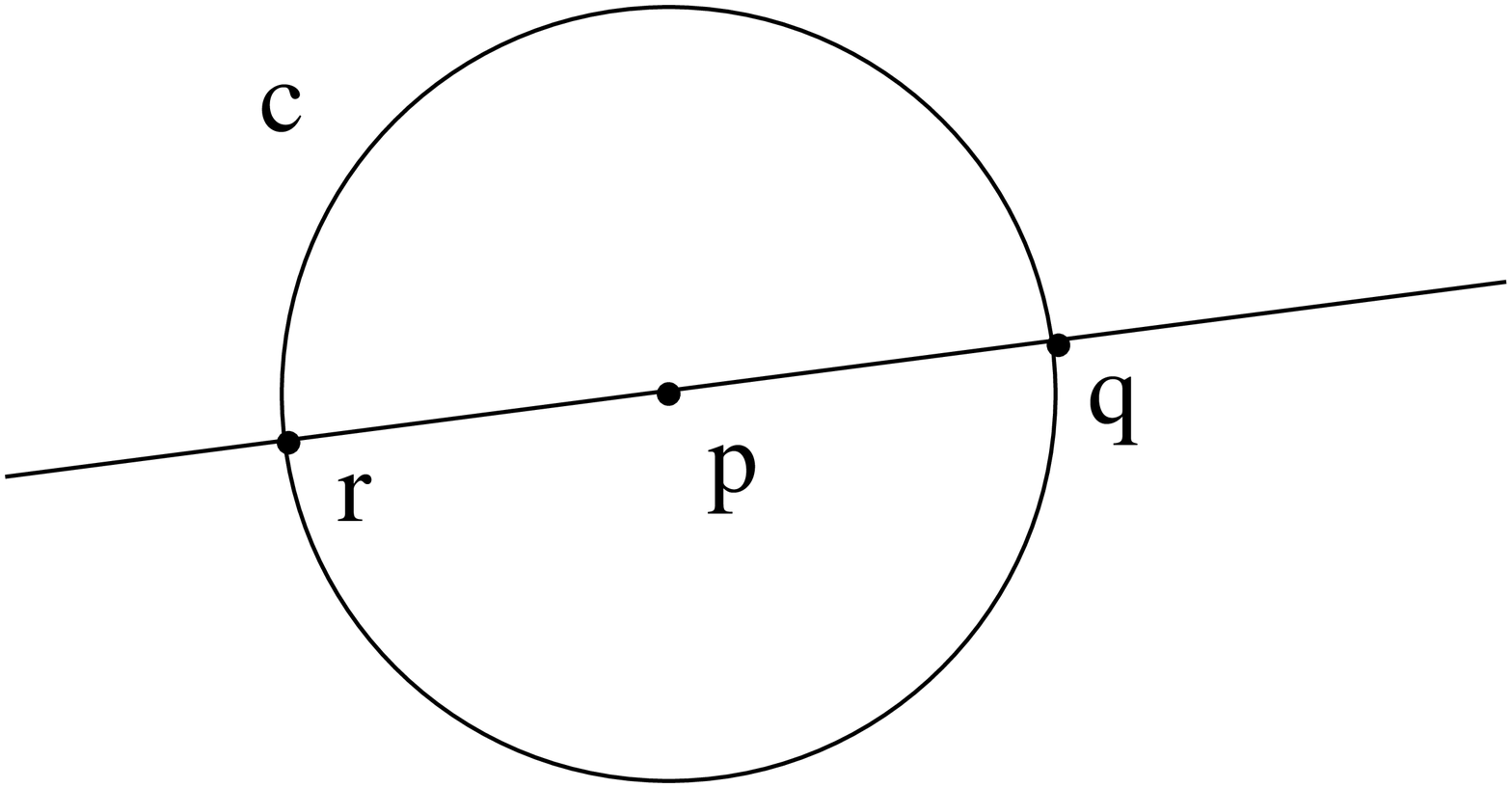}
\psfrag{c}{$\alpha$}\psfrag{d}{$\beta$}\psfrag{p}{$a$}\psfrag{q}{$b$}\psfrag{r}{$c$}\psfrag{s}{$d$}
\includegraphics[height=1.5cm]{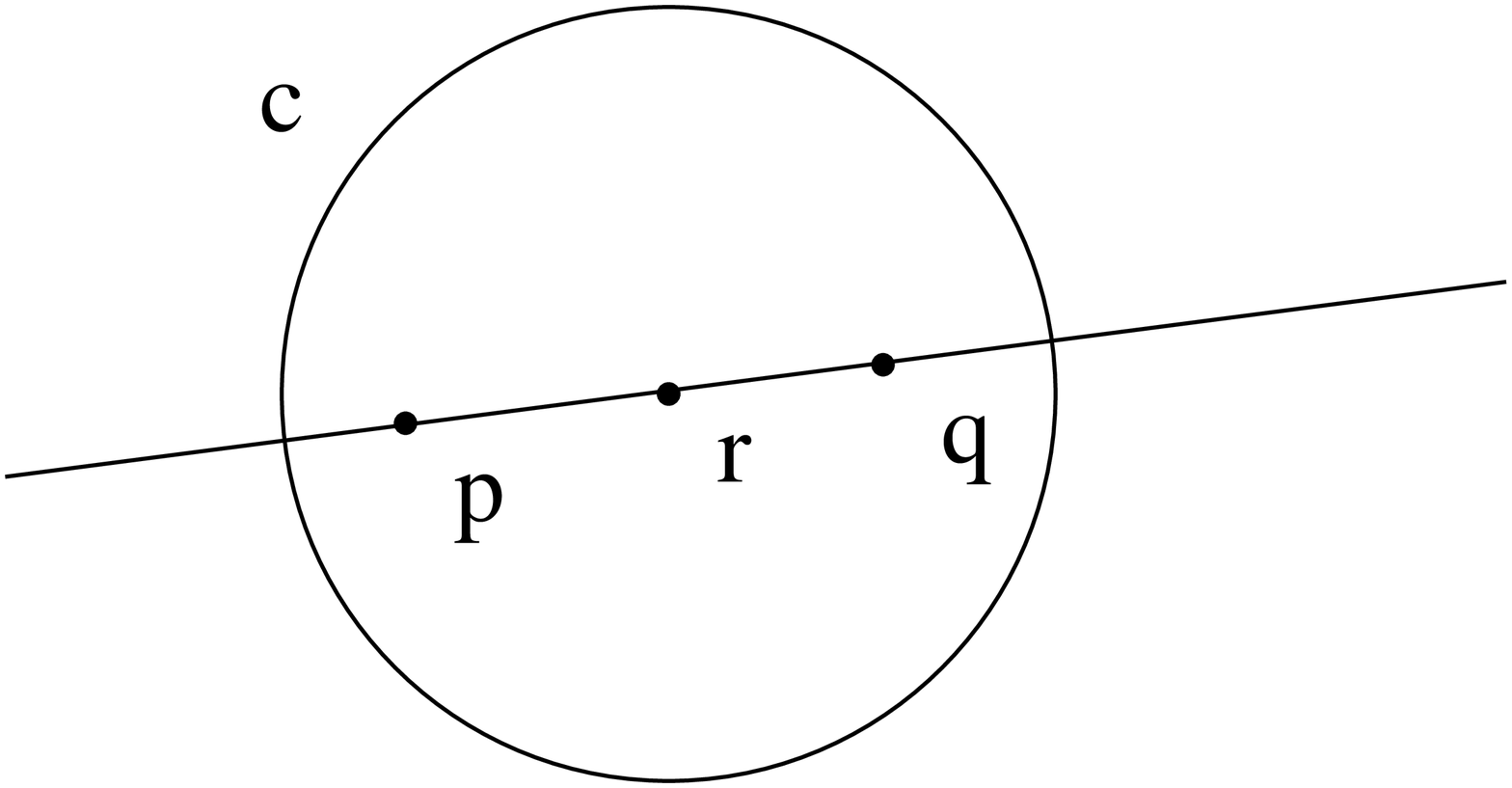}
\psfrag{c}{$\alpha$}\psfrag{d}{$\beta$}\psfrag{p}{$a$}\psfrag{q}{$b$}\psfrag{r}{$c$}\psfrag{s}{$d$}
\includegraphics[height=1.5cm]{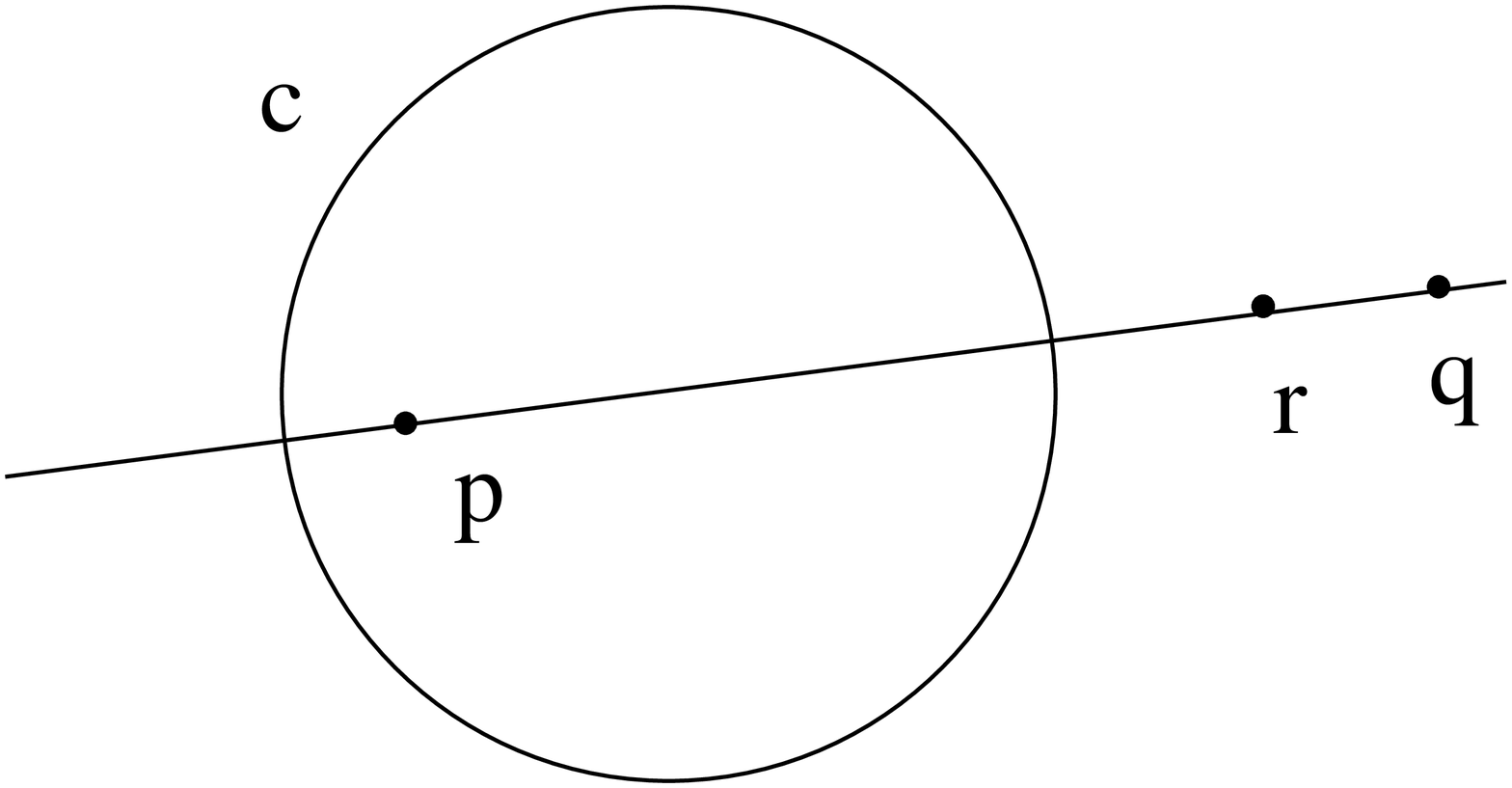}
\psfrag{c}{$\alpha$}\psfrag{d}{$\beta$}\psfrag{p}{$a$}\psfrag{q}{$b$}\psfrag{r}{$c$}\psfrag{s}{$d$}
\includegraphics[height=1.5cm]{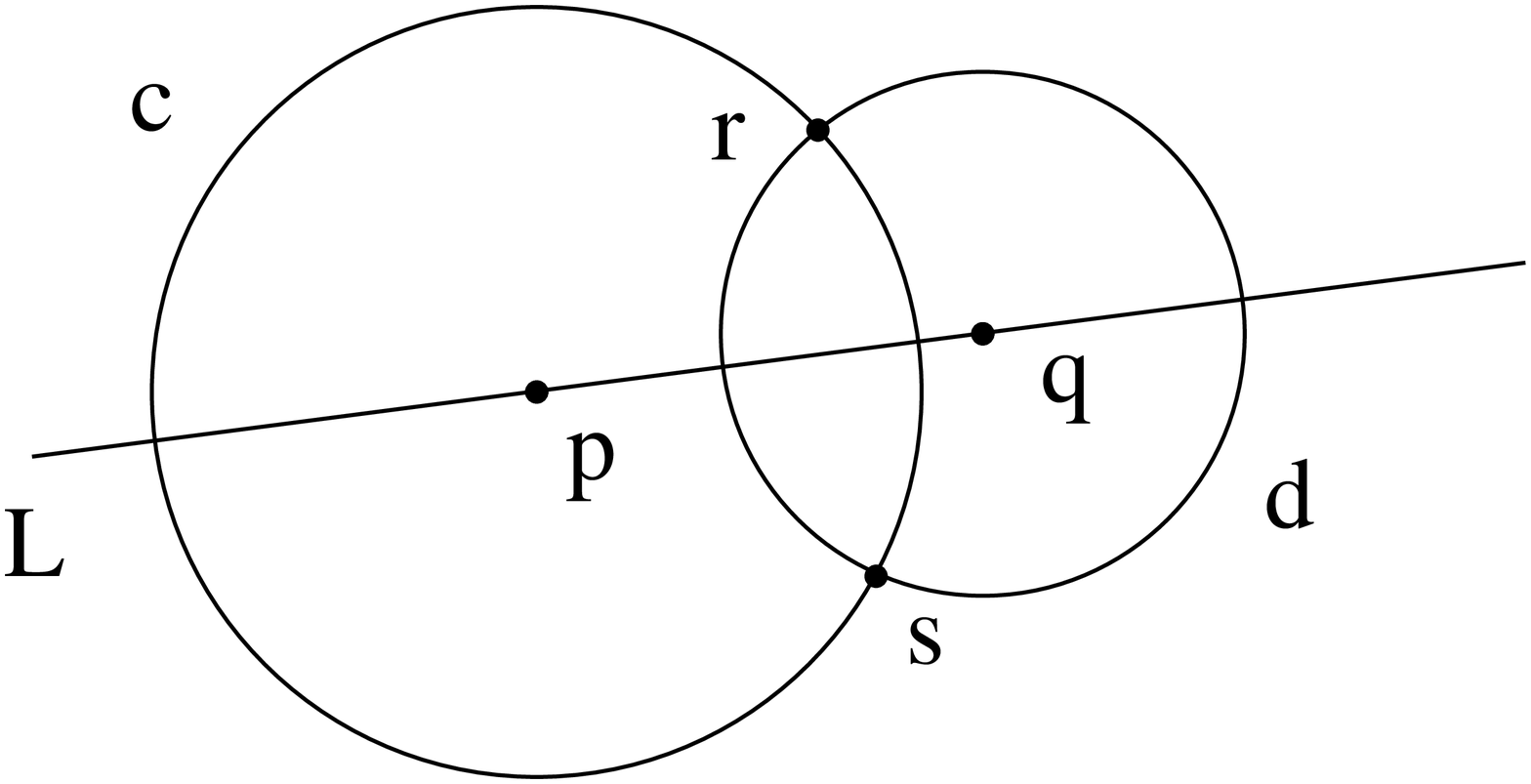}
\end{center}
\caption{Circle axioms 1--4.}
\end{figure}
}

\noindent {\bf Intersection rules}

\nopagebreak

\begin{enumerate}
\item If $a$ and $b$ are on different sides of $L$, and $M$ is the line
  through $a$ and $b$, then $L$ and $M$ intersect.
\item If $a$ is on or inside $\alpha$, $b$ is on or inside $\alpha$,
  and $a$ and $b$ are on different sides of $L$, then $L$ and $\alpha$
  intersect. 
\item If $a$ is inside $\alpha$ and on $L$, then $L$ and $\alpha$
  intersect.
\item If $a$ is on or inside $\alpha$, $b$ is on or inside $\alpha$,
  $a$ is inside $\beta$, and $b$ is outside $\beta$, then $\alpha$ and
  $\beta$ intersect.
\item If $a$ is on $\alpha$, $b$ is in $\alpha$, $a$ is in $\beta$,
  and $b$ is on $\beta$, then $\alpha$ and $\beta$ intersect.
\end{enumerate}

\noindent Recall that ``intersection'' means transversal intersection.
The first axiom says that a line passing from one side of $L$ to the
other intersects it. The second axiom says that if $\alpha$ is a
circle that straddles $L$, then $\alpha$ intersects $L$. The third
axiom says that a line that passes through a circle intersects it. The
fourth and fifth axioms are the analogous properties for circles. The
third axiom can be viewed as the assertion that a line cannot be
bounded by a circle; the others can be viewed as continuity
principles.

\bigskip

\noindent {\bf Equality axioms}

\nopagebreak

\begin{enumerate}
\item $x = x$
\item If $x = y$ and $\ph(x)$, then $\ph(y)$
\end{enumerate}

\noindent Here $x$ and $y$ can range over any of the sorts (that is,
there is an equality symbol for each sort) and $\ph$ can be any atomic
formula. These are the usual equality axioms for first-order logic,
and so may be taken to be subsumed under the notion of ``first-order
consequence.''

\subsection{Metric inferences}
\label{metric:section}

Consider the structure $\la \RR^{+}, 0, +, < \ra$, that is, the
nonnegative real numbers with zero, addition, and the less-than
relation. It is well known that the theory of this structure is
decidable. The set of universal consequences of this theory (or,
equivalently, the set of quantifier-free formulas that are true of the
structure under any assignment to the free variables) can be
axiomatized as follows:
\begin{itemize}
\item $+$ is associative and commutative, with identity $0$.
\item $<$ is a linear ordering with least element $0$.
\item For any $x$, $y$, and $z$, if $x < y$ then $x + z < y + z$.
\end{itemize}
Equivalently, these axioms describe the nonnegative part of any
linearly ordered abelian group. Happily, these are the general
properties Euclid assumes of magnitudes, that is, the segment lengths,
angles, and areas in our formalization (see Stein
\cite[p.~167]{stein:90}). To be more precise, Euclid seems to assume
that his magnitudes are strictly positive. But we have already noted
in Section~\ref{language:section} that we simply include $0$ for
convenience; we could just as well have axiomatized the strictly
positive reals. The axioms above imply that if $x + z = y + z$, then
$x = z$, which corresponds to Euclid's common notion 3, ``if equals be
subtracted from equals, the remainders are equal.'' The third axiom
implies that if $0 < y$, then $z < y + z$, which corresponds to common
notion 5, ``the whole is greater than the part.''

In addition to these, we include the following axioms, which Euclid
seems to take to be clear from the definitions (modulo the caveat, in
the last paragraph, that we include $0$ as a magnitude):
\begin{enumerate}
\item $\seg{ab} = 0$ if and only if $a = b$.
\item $\seg{ab} \geq 0$
\item $\seg{ab} = \seg{ba}$.
\item $a \neq b$ and $a \neq c$ imply $\angle abc = \angle cba$.
\item $0 \leq \angle abc$ and $\angle abc  \leq \rightangle + \rightangle$.
\item $\area{aab} = 0$.
\item $\area{abc} \geq 0$.
\item $\area{abc} = \area{cab}$ and $\area{abc} = \area{acb}$.
\item If $\seg{ab} = \seg{a'b'}$, $\seg{bc} = \seg{b'c'}$, $\seg{ca} =
  \seg{c'a'}$, $\angle abc = \angle a'b'c'$, $\angle bca = \angle
  b'c'a'$, and $\angle cab = \angle c'a'b'$, then $\area{abc} =
  \area{a'b'c'}$. 
\end{enumerate}
Note that we do not ascribe any meaning to the magnitude $\angle{abc}$
when $b = a$ or $b = c$. As axiom 6 indicates,
however, we take ``degenerate'' triangles to have area $0$. Once
Euclid has proved two triangles congruent (that is, once he has shown
that all their parts are equal), he uses the fact that they have the
same area, without comment. The last axiom simply makes this explicit.

Of course, there are further properties involving magnitudes that can
be read off from a diagram, and, conversely, metric considerations can
imply diagrammatic facts. These ``transfer inferences'' are the
subject of the next section.

\subsection{Transfer inferences}
\label{transfer:section}

We divide the transfer inferences into three groups, depending on
whether they involve segment lengths, angles, or areas.

\bigskip

\noindent {\bf Diagram-segment transfer axioms}

\nopagebreak

\begin{enumerate}
\item If $b$ is between $a$ and $c$, then $\seg{ab} + \seg{bc} = \seg{ac}$.
\item If $a$ is the center of $\alpha$ and $\beta$, $b$ is on
  $\alpha$, $c$ is on $\beta$, and $\seg{ab} = \seg{ac}$, then $\alpha
  = \beta$.
\item If $a$ is the center of $\alpha$ and $b$ is on $\alpha$, then
  $\seg{ac} = \seg{ab}$ if and only if $c$ is on $\alpha$.
\item If $a$ is the center of $\alpha$ and $b$ is on $\alpha$, and
  $\seg{ac} < \seg{ab}$ if and only if $c$ is in $\alpha$.
\end{enumerate}

\noindent The second axiom implies that a circle is determined by its
center and radius. In the discussion in
Section~\ref{departures:section}, we will explain that this is a mild
departure from Euclid's treatment of circles. (Euclid seems to rely on
a construction rule which has the same net effect.) When $\alpha =
\beta$, this axiom implies the converse direction of the equivalence
in axiom 3 (so that axiom could be stated instead as an implication).

\bigskip

\noindent {\bf Diagram-angle transfer axioms}

\begin{enumerate}
\item Suppose $a \neq b$, $a \neq c$, $a$ is on $L$, and $b$ is on
  $L$.  Then $c$ is on $L$ and $a$ is not between $b$ and $c$ if and
  only if $\angle bac = 0$.
\item Suppose $a$ is on $L$ and $M$, $b$ is on $L$, $c$ is on $M$, $a \neq
  q$, $a \neq c$, $d$ is not on $L$ or $M$, and $L \neq M$. Then 
  $\angle bac = \angle bad + \angle dac$ if and only if $b$ and $d$
  are on the same side of $M$ and $c$ and $d$ are on the same side of $L$.
\item Suppose $a$ and $b$ are points on $L$, $c$ is between $a$ and $b$,
  and $d$ is not on $L$. Then $\angle acd = \angle dcb$ if and only if
  $\angle acd$ is equal to $\rightangle$.
\item Suppose $a$, $b$, and $b'$ are on $L$, $a$, $c$, and $c'$ are on
  $M$, $b \neq a$, $b'\neq a$, $c \neq a$, $c' \neq a$, $a$ is not
  between $b$ and $b'$, and $a$ is not between $c$ and $c'$. Then
  $\angle bac = \angle b'ac'$. 
\item Suppose $a$ and $b$ are on $L$, $b$ and $c$ are on $M$, and $c$
  and $d$ are on $N$. Suppose also that $b \neq c$, $a$ and $d$ are on
  the same side of $N$, and $\angle abc + \angle bcd < \rightangle +
  \rightangle$. Then $L$ and $N$ intersect, and if $e$ is on $L$ and
  $N$, then $e$ and $a$ are on the same side of $M$.
\end{enumerate}

\showdiagram{
\begin{figure}

\begin{center}
\psfrag{L}{$L$}\psfrag{a}{$a$}\psfrag{b}{$b$}\psfrag{c}{$c$}\psfrag{d}{$d$}\psfrag{N}{$N$}\psfrag{M}{$M$}
\includegraphics[height=2cm]{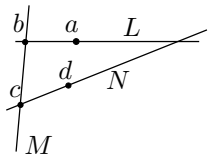}
\end{center}
\caption{diagram-angle transfer axiom 5.}
\end{figure}
}

\noindent The first axiom says that if $a$ and $b$ are distinct points
on a line $L$, then a point $c$ is on $L$ on the same side of $a$ as
$b$ if and only if $\angle bac = 0$. The right-hand side of the
equivalence in the second axiom can be read more simply as the
assertion that $d$ lies inside the angle $bac$. Thus the axiom
implies that angles sum in the expected way. The third axiom
corresponds to Euclid's definition 10, ``when a straight line set up
on a straight line makes the adjacent angles equal to one another,
each of the equal angles is called \emph{right}\ldots.'' It also, at
the same time, codifies postulate 4, ``all right angles are equal to
one another,'' using the constant, ``$\rightangle$,'' to describe the
magnitude that all right angles are equal to. The fourth axiom says
that different descriptions of the same angle are equal; more
precisely, if $ab$ and $ab'$ are the same ray, and likewise for $ac$
and $ac'$, then $abc$ and $ab'c$ are equal.

Euclid's wording may make it seem more natural to use a predicate to
assert that $abc$ forms a right angle, rather than using a constant,
``$\rightangle$,'' to denote an arbitrary right angle. But Euclid
seems to refer to an arbitrary right angle in his statement of this
parallel postulate, which, in the Heath translation, states:
\begin{quote}
That, if a straight line falling on two straight lines make the
interior angles on the same side less than two right angles, the two
straight lines, if produced indefinitely, meet on that side on which
are the angles less than the two right angles. \cite[p.~155]{euclid}
\end{quote}
Formulated in this way, a better name for the axiom might be the
``non-parallel postulate'': it asserts that if the diagram
configuration satisfies the given metric constraints on the angles,
then two of the lines are guaranteed to intersect. The postulate
translates to the last axiom above, which licenses the
construction ``let $e$ be the intersection of $L$ and $N$.''
Furthermore, assuming $e$ \emph{is} the intersection of $L$ and $N$,
the postulate specifies the side of $M$ on which $e$ lies.

\bigskip

\noindent {\bf Diagram-area transfer axioms}

\begin{enumerate}
\item If $a$ and $b$ are on $L$ and $a \neq b$, then $\area{abc} = 0$
  if and only if $c$ is on $L$.
\item If $a$, $b$, $c$ are on $L$ and distinct from one another, $d$
  is not on $L$, then $c$ is between $a$ and $b$ if and only if
  $\area{acd} + \area{dcb} = \area{adb}$.
\end{enumerate}

\noindent The second axiom implies that when a triangle is divided in
two, the areas sum in the expected way.

\subsection{Superposition}
\label{superposition:section}

We now come to the final two inferences in our system, Euclid's
notorious ``superposition inferences,'' which vexed commentators
through the ages (see the references in Section~\ref{proofs:section}).
Euclid's Proposition I.4 states the familiar ``side-angle-side''
property, namely that if two triangles $abc$ and $def$ are
such that $ab, ac$ are congruent to $de, df$ respectively, and $bac$
is congruent to angle $edf$, then the two triangles are congruent. The
proof proceeds by imagining $abc$ superimposed on $def$. In the Heath
translation:
\begin{quote}
For, if the triangle $abc$ be applied to the triangle $def$, and if
the point $a$ be placed on the point $d$ and the straight line $ab$ on
$de$, then the point $b$ will also coincide with $e$, because $ab$ is
equal to $de$.\ldots \cite[p.~247]{euclid}
\end{quote}
At issue is what it means to ``apply'' $abc$ to another triangle in
such a way. Euclid has not yet proved that one can \emph{construct} a
copy of $a'b'c'$ of $abc$ that will meet the given constraints. This
requires one to be able to copy a given angle, which is Euclid's
Proposition I.23.  The chain of reasoning leading to that proposition
includes Proposition I.4 as a component. The same issue arises in the
proof of Proposition I.8, which uses a superposition argument to
establish the ``side-side-side'' property.

\begin{figure}
\begin{center}
\psfrag{c}{$c$}\psfrag{d}{$d$}\psfrag{a}{$a$}\psfrag{b}{$b$}
\psfrag{c'}{$c'$}\psfrag{f}{$f$}\psfrag{a'}{$a'=d$}
\psfrag{b'}{$b'$}\psfrag{e}{$e$}\psfrag{g}{$g$}\psfrag{h}{$h$}
\includegraphics[height=2cm]{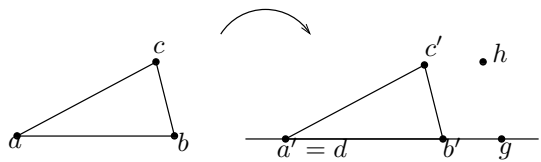}
\end{center}
\caption{superposition}
\end{figure}

How, then, shall we treat superposition? One possibility is simply to
add two new construction rules. The first would assert that given an
angle $abc$, a point $d$ on a line $L$, a point $g$ on $L$, and a
point $h$ not on $L$, one can construct points $a'$, $b'$, $c'$ such
that $a' = d$, $\angle a'b'c' = \angle abc$, $b'$ lies on $L$ in the
direction determined by $g$, and $c'$ lies on the same side of $L$ as
$h$. The second says that given a triangle $abc$, a point $d$ on a
line $L$, a point $g$ on $L$, and a point $h$ not on $L$, once can
find points $a'$, $b'$, $c'$ as above with $ab, bc, ca$ congruent to
$a'b', b'c', c'a'$, respectively.  These new construction rules would
certainly allow us to carry out the proofs of Propositions I.4 and
I.8, but the solution is not at all satisfying: Euclid takes great
pains to \emph{derive} the fact that one can carry out constructions
like these, using Propositions I.4 and I.8 along the way.

A second possibility is simply to add Propositions I.4 and I.8, the
SAS and SSS properties, as axioms. But, once again, this is not a
satisfactory solution, since it fails to explain why Euclid takes the
trouble to prove them.

Our formulation of $\na{E}$ provides a third, more elegant solution.
What superposition allows one to do is to act \emph{as though} one has
the result of doing the constructions above, but only for the sake of
proving things about objects that are already present in the diagram.
In proof-theoretic parlance, superposition is used as an
\emph{elimination} rule: if you can derive a conclusion assuming the
existence of some new objects, you can infer that the conclusion holds
without the additional assumption. In Euclid's case, one is barred,
however, from using the assumption to construct new objects.

This has a straightforward formulation as a sequent inference. Suppose
$\Gamma, \Delta$ includes assertions to the effect that $abc$ are
distinct and noncollinear, and $g$, $L$, and $h$ are as above. Let
$\Pi_1$ be the set
\[
\{ a' = d, \angle a'b'c' = \angle abc, \on(b',L), \lnot \mybetween(b',d,g),
\sameside(c',h,L) \}
\]
corresponding to the result of SAS superposition, and let $\Pi_2$
be the set
\[
\{ a' = d, \seg{ab} = \seg{a'b'}, \seg{bc} = \seg{b'c'}, \seg{ca} =
\seg{c'a'}, \on(b',L), \lnot \mybetween(b',d,g), \sameside(c',h,L) \}
\]
corresponding to the results of SSS superposition. Then the rules
can be expressed as
 \begin{prooftree}
\AXM{\Gamma \fCenter \ex{\vec x.} \Delta}
\AXM{\Gamma, \Delta, \Pi_i \fCenter \Delta'}
\BIM{\Gamma \fCenter \ex{\vec x.} \Delta, \Delta'}
\end{prooftree}
where $i$ is equal to 1, 2, respectively.

\subsection{The notion of a ``direct consequence''}
\label{direct:section} 

We have characterized ``the diagram'' in a Euclidean proof as the
collection of diagrammatic facts that have been established, either by
construction or by inference, at a given point in the proof; and we
have characterized the ``diagrammatic inferences'' as those
diagrammatic facts that are ``direct consequences'' of those. The goal
of this section is to complete the description of $\na{E}$ by spelling
out an adequate notion of ``direct consequence.''

Our attempts to define such a notion are constrained by a number of
desiderata. The first is fidelity to Euclid:
\begin{itemize}
\item The direct consequences of a set of diagrammatic hypotheses
  should provide an adequate model of the diagrammatic facts that
  Euclid makes use of in a proof, either explicitly or in licensing a
  construction or a metric conclusion, without explicit justification.
\end{itemize}
The next two are more mathematical:
\begin{itemize}
\item Any direct consequence should be, in particular, a
  first-order consequence of the diagrammatic axioms and diagrammatic
  facts in $\Gamma, \Delta$.
\item Conversely, any diagrammatic assertion that is a first-order
  consequence of the diagrammatic axioms should be derivable in
  $\na{E}$, though not necessarily in one step.
\end{itemize}
The first constraint says that direct consequences of a set of
diagrammatic assertions should be \emph{sound} with respect to the set
of first-order consequences of the diagrammatic axioms. The second
constraint says that together with the other methods of proof
provided by $\na{E}$, they should be \emph{complete} as well. We will
see that there is a lot of ground between these two constraints. For
example, they can be met by taking the direct consequences to be
\emph{all} first-order consequences. But this overshoots our first
desideratum, since it would let us make direct inferences that
Euclid spells out more explicitly. Nor does it sit well with the
notion of ``directness.'' Since we are dealing with a universal theory
in a language with no function symbols, the set of literals that are
consequences of a given set $\Gamma$ of literals is decidable: one
only need extract all instances of the axioms among the variables in
$\Gamma$, and use a decision procedure for propositional logic. But
this is unlikely to be computationally feasible,\footnote{We do not,
  however, have a lower bound on the computational complexity of the
  decision problem associated with our particular set of axioms.} and
we expect a ``direct'' inference to be more tame than that. Thus
our third desiderata is of a computational nature:
\begin{itemize}
\item The problem of determining whether a literal is a direct
  consequence of some diagrammatic facts should be, in some sense,
  computationally tractable.
\end{itemize}
The notion of tractability should be taken with a grain of salt. It is
loosely related to the practical question as to whether one can
implement a proof checker for our formal system which performs
reasonably on formalized proofs of statements in the \emph{Elements},
a question we address in Section~\ref{implementation:section}. But it
is worth keeping in mind that even our theoretical characterization is
only intended to be compelling at the level of complexity found in
proofs in the \emph{Elements}. When a diagram has millions of points,
lines, and circles, we may be faulted for sanctioning ``direct''
inferences that cannot be carried out with our limited cognitive
apparatus. But even propositional logic, as a model of logical
inference, is subject to the same criticisms: can we really
``recognize'' an instance of modus ponens when the formulas involved
have more than $2^{100}$ symbols?

To develop a notion of direct consequence, let us begin by noting
that most of our axioms are naturally expressed as rules; in other
words, they have the form
\begin{quote}
if $\ph_1, \ph_2, \ldots, \ph_n$ then $\psi$
\end{quote}
where $\ph_1, \ldots, \ph_n, \psi$ are literals. The example in
Section~\ref{diagrammatic:nature:section} suggests that we should be
able to chain such rules; that is, whenever we know $\ph_1, \ldots,
\ph_n$, we also know $\psi$, and can use $\psi$ to secure
further knowledge.  Occasionally, our diagrammatic axioms are not
quite in rule form, with either a disjunction among the hypothesis or
a conjunction in the conclusion. But this can be viewed as a
notational convenience; the rule ``if $\ph_1, \ph_2, \ldots, \ph_n$
then $\psi$ and $\theta$'' is equivalent to the pair of rules ``if
$\ph_1, \ph_2, \ldots, \ph_n$ then $\psi$'' and ``if $\ph_1, \ph_2,
\ldots, \ph_n$ then $\theta$,'' and the rule ``if $\ph_1, \ph_2,
\ldots, \ph_n$ and either $\theta$ or $\eta$ then $\psi$'' is
equivalent to the pair of rules ``if $\ph_1, \ph_2, \ldots, \ph_n$ and
$\theta$ then $\psi$'' and ``if $\ph_1, \ph_2, \ldots, \ph_n$ and
$\eta$ then $\psi$.''

A moment's reflection, however, shows that we should also allow
``contrapositive'' variants of our rules. For example, consider the
first Pasch axiom:
\begin{quote}
  if $b$ is between $a$ and $c$ and $a$ and $c$ are on the same side of
  $L$, then $a$ and $b$ are on the same side of $L$ 
\end{quote}
Certainly, if we know that $b$ is between $a$ and $c$ and that $a$ and
$c$ are on the same side of $L$, we should be allowed to infer that
$a$ and $b$ are on the same side of $L$. But suppose we know that $b$
is between $a$ and $c$ but that the conclusion fails, that is, $a$ and
$b$ are not on the same side of $L$. Drawing a picture or imagining
the situation in our mind's eye enables us to see, straightforwardly,
that the second hypothesis fails, that is, $a$ and $c$ are not on the
same side of $L$. In other words, we should include the rule
\begin{quote}
  if $b$ is between $a$ and $c$ and $a$ and $b$ are not on the same
  side of $L$ then $a$ and $c$ are not on the same side of
  $L$
\end{quote}
as a variant of the above. More generally, we should read the rule
``if $\ph_1, \ph_2, \ldots, \ph_n$ then $\psi$'' as the disjunction
\begin{quote}
  either not $\ph_1$, or not $\ph_2$, or \ldots, or not $\ph_n$, or
  $\psi$
\end{quote}
and infer any disjunct once we know that the others are false. This is
exactly the notion of direct consequence that we adopt: we take the
set of direct consequences of a set of diagrammatic assertions to be
the set obtained by closing the set under the inferences just
described.

Let us spell out the details more precisely. For simplicity, we
initially restrict our attention to propositional logic. A
\emph{clause} is simply a finite set of propositional literals; think
of each clause as representing the associated disjunction. Let $S$ be
a set of propositional clauses and let $\Gamma$ be a set of
propositional literals. Take negation as an operation mapping literals
to literals, that is, identify $\lnot \lnot p$ with $p$. We define the
\emph{set of direct consequences of $\Gamma$ under $S$} to be the
smallest set $\Gamma'$ of literals that includes $\Gamma$ and is
closed under the following rule: if $\{ \ph_1, \ldots, \ph_n \}$ is a
clause in $S$ and $\lnot \ph_1, \ldots, \lnot \ph_{n-1}$ are all in
$\Gamma'$, then $\ph_n$ is in $\Gamma'$. In other words, $\Gamma'$ is
obtained by starting with the literals in $\Gamma$ and applying the
rule above to add literals, one at a time, until no more literals can
be added. We adopt the understanding, however, that if $\Gamma'$
contains an atomic formula and its negation, then it contains every
literal; in other words, everything is a consequence of a
contradiction.

We now provide an alternative characterization of the set $\Gamma'$.
Consider a sequent calculus formulation of intuitionistic logic
\cite{buss:98e,troelstra:schwichtenberg:00}, with sequents of the form
$\Pi \Rightarrow \ph$, intended to denote that the set of hypotheses
in $\Pi$ entails $\ph$. Take the ``contrapositive variants'' of any
clause $\{ \ph_1, \ldots, \ph_n \}$ to be the sequents of the form $\{
\lnot \ph_1, \ldots, \lnot \ph_{n-1} \} \Rightarrow \ph_n$, again with
the understanding that if $A$ is atomic then $\lnot \lnot A$ is
replaced by $A$.

\begin{proposition}
\label{direct:equiv:prop}
Let $S$ be a set of clauses, and let $\Gamma, \theta$ be a set of
propositional literals. The following are equivalent:
\begin{enumerate}
\item $\theta$ is a direct consequence of $\Gamma$ under $S$.
\item There is an intuitionistic proof of the sequent $\Rightarrow
  \theta$ from initial sequents that are either contrapositive
  variants of the clauses in $S$ or of the form $\Rightarrow \psi$,
  where $\psi$ is a formula in $\Gamma$.
\end{enumerate}
\end{proposition}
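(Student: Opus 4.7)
The plan is to prove the two directions of the equivalence separately. Direction (1) implies (2) is a straightforward induction on the construction of the direct consequence closure $\Gamma'$; direction (2) implies (1) is more subtle and proceeds via cut elimination in the intuitionistic sequent calculus.

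For the forward direction, I proceed by induction on the stage at which $\theta$ enters $\Gamma'$. If $\theta \in \Gamma$, then $\Rightarrow \theta$ is already one of the initial sequents, and nothing further is required. Otherwise $\theta = \ph_n$ for some clause $\{\ph_1, \ldots, \ph_n\} \in S$ whose associated literals $\lnot \ph_1, \ldots, \lnot \ph_{n-1}$ are all previously established direct consequences of $\Gamma$. By the induction hypothesis each $\Rightarrow \lnot \ph_i$ has an intuitionistic proof, and combining these proofs with the contrapositive variant $\{\lnot \ph_1, \ldots, \lnot \ph_{n-1}\} \Rightarrow \ph_n$ through $n-1$ successive cuts produces the required intuitionistic proof of $\Rightarrow \ph_n$.

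For the converse direction, I apply cut elimination to the given intuitionistic proof of $\Rightarrow \theta$. Since every formula occurring in the initial sequents and in the end-sequent is a literal, the subformula property of cut-free intuitionistic proofs forces every formula in the resulting derivation to be a literal as well. This severely restricts the inference rules that can appear: besides the initial sequents and the structural rules acting on the antecedent, only the negation rules acting on literal formulas can be used, since no conjunction, disjunction, implication, or quantifier ever figures in the proof. I then prove, by induction on the size of the cut-free derivation, the strengthened statement that for any sequent $\Pi \Rightarrow \rho$ with $\Pi$ a finite set of literals and $\rho$ either a literal or $\bot$ which is cut-freely derivable from the initial sequents, $\rho$ is a direct consequence of $\Gamma \cup \Pi$ under $S$ (and, in the case $\rho = \bot$, the closure of $\Gamma \cup \Pi$ is inconsistent). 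The base cases for the initial sequents are immediate by the definition of direct consequence, and the inductive cases for the structural rules are handled by the monotonicity of the closure operation.

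The principal obstacle is the inductive case for the negation-introduction rule: here the cut-free proof constructs $\lnot \ph$ at the succedent from a subderivation in which $\ph$ is assumed and $\bot$ is derived, and one must verify that this pattern of reasoning does not produce anything beyond what the direct consequence closure already contains. The key observation is that, for each clause in $S$, we have every one of its $n$ contrapositive variants at our disposal, so the direct consequence closure treats all literals of a clause symmetrically and already absorbs the kind of contrapositive reasoning that the negation-introduction rule performs in the literal-only setting. Carefully spelling out this symmetry, and matching it against the cases that arise in the induction on cut-free derivations, is the core of the argument.
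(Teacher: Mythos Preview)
Your forward direction (1)$\Rightarrow$(2) matches the paper's. The backward direction has a real gap.

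A minor point first: with non-logical initial sequents (the contrapositive variants and the $\Gamma$-axioms) you cannot in general eliminate cuts entirely; the standard result for sequent calculi with extra axioms only restricts cuts to subformulas of the axioms, which here means literals. The paper accordingly works with proofs in which every cut formula is a literal, and its induction handles the literal-cut case explicitly. Your claim of a fully cut-free proof is not justified as stated.

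The serious gap is that your induction hypothesis is too weak for the right-negation rule. From a subderivation of $\Pi, \ph \Rightarrow \bot$ your hypothesis yields only that the closure of $\Gamma \cup \Pi \cup \{\ph\}$ under $S$ is inconsistent; to finish the $R\lnot$ case you need $\lnot\ph$ to lie in the closure of $\Gamma \cup \Pi$. But ``adding $\ph$ makes the closure inconsistent'' does \emph{not} imply ``$\lnot\ph$ is already in the closure'' for a unit-propagation style closure---this is exactly the kind of failed-literal inference that goes beyond unit propagation. The paper's remedy is to carry a strictly stronger two-part induction hypothesis: alongside ``if $\theta_1,\ldots,\theta_n \in \Gamma''$ then $\eta$ is in the closure of $\Gamma''$,'' it also maintains ``if $\eta = \bot$ and $\theta_1,\ldots,\theta_{n-1} \in \Gamma''$, then $\lnot\theta_n$ is in the closure of $\Gamma''$.'' Applying this second clause to the premise $\Pi,\ph \Rightarrow \bot$ of $R\lnot$, dropping $\ph$, yields $\lnot\ph$ directly; the clause must then itself be propagated through the cut and $L\lnot$ cases. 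Your gesture toward the ``symmetry of contrapositive variants'' points in the right direction---the availability of all contrapositive forms is what makes the second clause hold at the axiom leaves---but it is not a substitute for formulating and verifying the strengthened hypothesis through the inductive cases.
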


\begin{proof}
  The implication from 1 to 2 is straightforward, since adding to
  $\Gamma'$ the result of applying our rule of inference with one of
  the clauses in $S$ is equivalent to inferring the consequence of the
  implication given by a contrapositive variant of that clause. The
  fact that as soon as $\Gamma'$ contains an atomic formula and its
  negation we take every literal to be a direct consequence follows
  from the fact that $\bot$, and hence every formula, is an
  intuitionistic consequence of an atomic formula and its negation.

  Conversely, suppose there is an intuitionistic proof of $\Rightarrow
  \psi$ from the initial sequents described in 2. Then by a version of
  cut-elimination theorem for the intuitionistic sequent calculus with
  axioms and additional rules (\cite[Theorem 2.4.5]{buss:98e} or
  \cite[Section 4.5.1]{troelstra:schwichtenberg:00}), there is a proof
  in which every cut formula is a literal. Since there are no other
  logical connectives in the initial sequents or conclusion, the only
  other rules used are the rules for negation and the ``ex falso''
  rule $\Pi, \bot \Rightarrow \eta$.

  We can therefore obtain the desired conclusion by proving the
  following claim:
\begin{quote}
  Suppose $d$ is a proof of a sequent $\{ \theta_1, \ldots, \theta_n
  \} \Rightarrow \eta$ from the initial sequents described in 2, using
  only the negation rules, \emph{ex falso}, and the cut rule
  restricted to literals. Then for any $\Gamma'' \supseteq \Gamma$,
\begin{enumerate}
\item if $\theta_1, \ldots, \theta_n$ are in $\Gamma''$, then $\eta$
  is in the closure of $\Gamma''$ under $S$; and
\item if $\eta$ is $\bot$ and $\theta_1, \ldots, \theta_{n-1}$ are in
  $\Gamma''$, then $\lnot \theta_n$ is in the closure of $\Gamma''$
  under $S$.
\end{enumerate}
\end{quote}
  This can be proved by a straightforward induction on $d$. Suppose the
  the last inference of $d$ is the cut rule,
\begin{prooftree}
\AXN{\theta_1, \ldots, \theta_n \fCenter \alpha}
\AXN{\theta_1, \ldots, \theta_n, \alpha \fCenter \eta}
\BIN{\theta_1, \ldots, \theta_n \fCenter \eta}
\end{prooftree}
If $\eta$ is not $\bot$, applying the inductive hypothesis to the left
subproof yields that for any $\Gamma'' \supseteq \Gamma$, if
$\theta_1, \ldots, \theta_n$ are in $\Gamma''$, then $\alpha$ is in
the closure of $\Gamma''$ under $S$. Applying the inductive hypothesis
to the right subproof and $\Gamma'', \alpha$ yields that $\eta$ is in
the closure of $\Gamma'', \alpha$ under $S$, and hence in the closure
of $\Gamma''$ under $S$, as required. The case where $\eta$ is $\bot$
is similar.

Handling the other rules is straightforward. For example, if the last
inference of $d$ is a left negation introduction, it is of the
following form:
\begin{prooftree}
\AXN{\theta_1, \ldots, \theta_{n-1} \fCenter \alpha}
\UIN{\theta_1, \ldots, \theta_{n-1}, \lnot \alpha \fCenter \eta}
\end{prooftree}
In that case, the desired conclusions are obtained by applying the
inductive hypothesis to the immediate subproof.
\end{proof}

In the statement of the last proposition, instead of taking all
contrapositive variants of the clauses in $S$, one can equivalently
take any \emph{one} contrapositive variant of each clause in $S$, if
we also add the following rule of double-negation elimination for
atomic formulas:
\begin{prooftree}
\AXN{\Pi, \lnot A \fCenter \bot}
\UIN{\Pi \fCenter A}
\end{prooftree}  
This has the net effect of making $\lnot \lnot A$ equivalent to $A$.
But it is important to recognize that this is \emph{not} the same as
adding the law of the excluded middle, $A \lor \lnot A$, for atomic
formulas.  Indeed, this is exactly what is missing from the notion of
a direct consequence. For example, suppose $S$ has rules ``if $A$ and
$B$ then $C$'' and ``if $A$ and not $B$ then $C$.'' Then $C$ is
certainly a classical propositional consequence of $\{ A \}$ under
these rules, since $C$ follows from both $B$ and from $\lnot B$. But
it is not a \emph{direct} consequence. This distinction is what makes
the notion of a direct consequence well-suited to modeling the
diagrammatic inferences in the \emph{Elements}. Euclid \emph{does}
explicitly introduce case splits when they are needed, and so any
inference that requires considering different diagrammatic
configurations, in an essential way, should not count as ``reading off
from the diagram.'' These case splits make all the difference: the
next two propositions show that, in the propositional setting, they mark the
difference between the complexity classes P and NP.

\begin{proposition}
Let $\Gamma$ be a set of literals and let $S$ be a set of clauses. The
question ``is $\theta$ a direct consequence of $\Gamma$ under
$S$?'' can be decided in time polynomial in the size of $\Gamma$ and
$S$. 
\end{proposition}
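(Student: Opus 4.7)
The plan is to observe that the closure operation defining direct consequence is essentially \emph{unit propagation} on the clauses of $S$, and to bound the work done by a naive fixed-point computation.

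First I would fix notation: let $n$ be the number of distinct atomic formulas appearing in $\Gamma \cup S$, and let $m$ denote the total size (number of literal occurrences) of $S$. The set $\Gamma'$ we are constructing is a set of literals over these atoms, so it has cardinality at most $2n$. The algorithm is the obvious one suggested by the definition: initialize $\Gamma_0 := \Gamma$; at each stage, scan every clause $C = \{\varphi_1,\ldots,\varphi_n\} \in S$ and every distinguished position $i \leq n$, and check whether $\lnot \varphi_j \in \Gamma_k$ for all $j \neq i$; if so, add $\varphi_i$ to form $\Gamma_{k+1}$. Stop when no new literal is added, or as soon as some atom and its negation both appear in $\Gamma_k$, in which case output ``yes'' (every literal, including $\theta$, is a direct consequence). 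Finally, return whether $\theta \in \Gamma'$.

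For correctness, one argues by an easy induction that this procedure produces exactly the smallest set containing $\Gamma$ and closed under the stated rule, matching the definition verbatim (with the contradiction short-circuit handling the ``everything follows from a contradiction'' clause of the definition). For the complexity bound, each iteration that does not terminate strictly enlarges $\Gamma_k$, so there are at most $2n$ productive iterations. A single scan inspects each clause $C$ and each candidate distinguished literal, which takes $O(m)$ time using a table that records, for each literal $\ell$, whether $\ell \in \Gamma_k$. Hence the total running time is $O(n \cdot m)$, polynomial in the input size $|\Gamma| + |S|$.

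I do not expect any serious obstacle: the main conceptual point is simply that the definition of direct consequence, despite its disjunctive flavor (any one of the literals of a clause may be the ``conclusion''), reduces to unit propagation because we only fire a clause when all but one of its literals are known to be false. The only mild subtleties are (i) bookkeeping so that each clause need not be fully rescanned from scratch each round --- a standard watched-literal or counter-based implementation gives an even better bound, but is not needed here --- and (ii) the contradiction short-circuit, which is trivial to handle by checking after each insertion whether the newly added literal's negation is already present.
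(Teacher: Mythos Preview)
Your proposal is correct and follows essentially the same approach as the paper: iterate the closure rule starting from $\Gamma$, observe that each productive round adds at least one new literal so there are at most linearly many rounds, and note that each round is a polynomial-time scan of the clauses. Your identification of the procedure as unit propagation and your explicit $O(n \cdot m)$ bound are slightly more detailed than the paper's version, but the argument is the same.
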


\begin{proof}
  If the encoding of $\Gamma$ and $S$ have length $n$, they contain at
  most $n$ propositional variables. Starting with the literals in
  $\Gamma$, iteratively apply the closure rule using clauses in $S$,
  until $\theta$ is added, or the set becomes inconsistent, or no
  further rules can be applied. Each step of the iteration amounts to
  scanning through the clauses in $S$ and matching against literals
  already in $\Gamma'$ to see whether a new literal can be added, and
  can be carried out in time polynomial in $n$. At each step, at least
  one literal is added the set $\Gamma'$ of consequences, so the
  process terminates in at most $n + 1$ steps.
\end{proof}

\begin{proposition}
Suppose one augments intuitionistic logic with the following rule:
\begin{prooftree}
\AXN{\Pi, A \fCenter \eta}
\AXN{\Pi, \lnot A \fCenter \eta}
\BIN{\Pi \fCenter \eta}
\end{prooftree}
where $A$ is an atomic formula and $\Pi, \eta$ is a set of literals.
Then a sequent $\Rightarrow \theta$ is provable from the initial
sequents described in Proposition~\ref{direct:equiv:prop} if and only
if $\theta$ is a classical consequence of $\Gamma$ together with the
clauses in $S$. Hence, in the presence of such case splits, the
problem of determining whether a literal is a consequence of $S$ is
NP complete.
\end{proposition}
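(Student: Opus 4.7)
My plan is to prove the biconditional by splitting into soundness and completeness, and then to read off the complexity claim from the resulting characterization.

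For soundness (provability implies classical consequence), I would proceed by induction on derivations. Each initial sequent $\Rightarrow \psi$ for $\psi \in \Gamma$ and each contrapositive variant $\lnot \ph_1, \ldots, \lnot \ph_{n-1} \Rightarrow \ph_n$ of a clause $\{\ph_1, \ldots, \ph_n\} \in S$ is plainly a classical consequence of $\Gamma$ together with $S$; all of the intuitionistic rules are classically sound; and the added case-split rule is classically valid since it is simply reasoning by excluded middle on the atom $A$.

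For completeness, suppose $\theta$ is a classical consequence of $\Gamma$ together with $S$. Let $A_1, \ldots, A_k$ enumerate the atomic formulas appearing in $\Gamma$, $S$, or $\theta$. I would build a proof whose skeleton is an iterated case split on $A_1, A_2, \ldots, A_k$ in turn, producing $2^k$ leaves, each corresponding to a complete truth assignment $v$ to the $A_i$. At each leaf the context contains exactly one literal for each $A_i$. I then claim that each leaf is closable: (i) if $v$ falsifies some $\psi \in \Gamma$, then both $\psi$ and $\lnot \psi$ are available, so $\bot$, and hence $\theta$, is derivable via \emph{ex falso}; (ii) if $v$ falsifies some clause $\{\ph_1, \ldots, \ph_n\} \in S$, then $\lnot \ph_1, \ldots, \lnot \ph_n$ are all in context, and applying the contrapositive variant $\lnot \ph_1, \ldots, \lnot \ph_{n-1} \Rightarrow \ph_n$ produces $\ph_n$, which together with $\lnot \ph_n$ yields $\bot$ and hence $\theta$; (iii) otherwise $v$ satisfies $\Gamma$ together with $S$ and so, by the classical consequence assumption, it also satisfies $\theta$, in which case the literal $\theta$ itself is in the context and the leaf discharges via the identity axiom. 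Combining the $2^k$ leaves through the $k$ nested case splits yields a proof of $\Rightarrow \theta$ in the augmented system.

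For the complexity claim, I would observe that the biconditional just established identifies the problem ``is $\theta$ a consequence of $\Gamma$ together with $S$?'' with classical propositional entailment: $\theta$ is a consequence exactly when the CNF formula $\Gamma \cup S \cup \{\lnot \theta\}$ (treating literals as unit clauses) is unsatisfiable. This is coNP-complete, with hardness by reduction from SAT and membership of the complement in NP by guessing a satisfying assignment; the authors' phrasing ``NP complete'' reflects the standard polynomial-time equivalence of the problem with its complement for completeness purposes. The main obstacle, and where I would devote the most care, is the completeness direction: one must verify that the three leaf cases above genuinely exhaust the possibilities and that each leaf is discharged using only identity, \emph{ex falso}, and the stipulated contrapositive variants, so that the construction really lives inside the augmented system rather than silently appealing to further classical reasoning.
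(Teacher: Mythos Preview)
Your argument is correct and follows essentially the same route as the paper: soundness by classical validity of all the rules, completeness by an exhaustive case-split tree over the relevant atoms with each leaf closed either by deriving $\bot$ from $\Gamma \cup S$ or by finding $\theta$ already in context, and hardness via the reduction from SAT. One small correction on a side remark: your explanation that the phrase ``NP complete'' reflects a ``polynomial-time equivalence of the problem with its complement'' is not right (that would amount to NP $=$ coNP); the paper is simply being loose with terminology, and your identification of the consequence problem as coNP-complete is the accurate statement.
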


\begin{proof}
  Since the rule for case splits is classically valid, it is clear
  that if $\Rightarrow \theta$ is provable from the initial
  sequents~\ref{direct:equiv:prop}, it is a classical consequence
  of $\Gamma$ together with the clauses in $S$.

  Conversely, given $\Rightarrow \theta$, we can work backwards and
  apply case splits until at each node we have a sequent $\Pi
  \Rightarrow \theta$ such that for every propositional variable $p$
  occurring in $\Gamma$ and $S$, either $p$ or $\lnot p$ is in $\Pi$.
  If each such sequent is classically inconsistent with $\Gamma$ and
  the clauses in $S$, we obtain a proof of $\Rightarrow \theta$.
  Otherwise, at least one such $\Pi$ describes a truth assignment
  which is consistent with $\Gamma$ and $S$ but makes $\theta$ false,
  showing that $\theta$ is not a classical consequence of $\Gamma$
  together with the clauses in $S$.

  To prove the final claim in the lemma, let $S$ be any set of
  propositional clauses, and let $p$ be a new propositional
  variable. Then $S$ is satisfiable if and only if $p$ is not a
  classical consequence of $S$. The claim follows from the fact that
  the satisfiability of a set of propositional clauses is NP complete.
\end{proof}

We now turn to the first-order setting. Suppose $S$ is a set of
clauses, where now a clause is a finite set of first-order literals.
Interpret these as universal axioms; that is, a clause $\{ \ph_1,
\ldots, \ph_n \}$ represents the universal closure of the associated
disjunction. If $\Gamma$ is a set of literals, define the set
$\Gamma'$ of direct consequences of $\Gamma$ under $S$ as before, but
now using arbitrary substitution instances of the clauses in $S$.

Focusing on $\na{E}$ in particular, we take the direct consequences of
a set of diagrammatic assertions, $\Gamma$, to be the set of direct
consequences of $\Gamma$ under the set of rules given in
Section~\ref{diagram:section}. Note that the language of $\na{E}$ has
no function symbols. Since there are a fixed number of relation
symbols, given $n$ variables ranging over points, lines, and circles,
one can bound the number of literals involving these variables with a
polynomial in $n$. The preceding propositions then show that our
notion of direct consequence has the following desirable
properties.

\begin{theorem}
  Every direct consequence of a set of diagrammatic assertions is a
  first-order consequence of these assertions and the diagrammatic
  axioms.
\end{theorem}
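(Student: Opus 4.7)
The plan is to reduce the claim to the first-order analogue of Proposition \ref{direct:equiv:prop}, which the preceding text tells us extends from the propositional setting by allowing arbitrary substitution instances of the clauses in $S$. So if $\theta$ is a direct consequence of $\Gamma$ under $S$, there is an intuitionistic derivation of $\Rightarrow \theta$ whose initial sequents are either of the form $\Rightarrow \psi$ with $\psi \in \Gamma$, or are contrapositive variants $\{\lnot \varphi_1, \ldots, \lnot \varphi_{n-1}\} \Rightarrow \varphi_n$ of substitution instances of clauses $\{\varphi_1, \ldots, \varphi_n\}$ in $S$.

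From there, the argument has two routine steps. First, I would observe that each such contrapositive variant is a classical tautological consequence of the disjunction $\varphi_1 \lor \cdots \lor \varphi_n$, and is therefore a first-order consequence of the universal closure of that disjunction, which is exactly the axiom from $S$ that it came from. (The identification of $\lnot \lnot A$ with $A$ for atomic $A$, used in forming the variants, is also classically valid, so this step is unaffected by the convention.) Hence every initial sequent used in the derivation is a first-order consequence of $\Gamma \cup S$. Second, I would note that intuitionistic derivability entails classical derivability, and sequent derivations preserve first-order consequence from their axioms; so $\Rightarrow \theta$ is a first-order consequence of $\Gamma \cup S$, i.e., $\theta$ follows in first-order logic from $\Gamma$ together with the diagrammatic axioms.

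There is no real obstacle here; the theorem is essentially a soundness corollary of the characterization in Proposition \ref{direct:equiv:prop}, and the only thing to watch is bookkeeping around the contrapositive-variant convention and the fact that the first-order version involves substitution instances of the axioms rather than the axioms themselves, both of which are handled by the remarks already made in the paper.
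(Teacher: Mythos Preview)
Your proposal is correct. The paper gives no detailed proof of this theorem, merely noting that it follows from the preceding propositions; your argument via Proposition~\ref{direct:equiv:prop} is a sound way to fill in the details, though one could also argue a bit more directly by induction on the closure definition of direct consequence, since each application of the closure rule is manifestly classically valid.
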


\begin{theorem}
  Any literal that is a classical consequence of a set of diagrammatic
  assertions and diagrammatic axioms can proved from those
  diagrammatic assertions in $\na{E}$.
\end{theorem}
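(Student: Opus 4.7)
The plan is to reduce the statement to the propositional setting covered by the two preceding propositions, and then lift the resulting propositional proof to a derivation in $\na{E}$. Let $\Gamma$ denote the set of diagrammatic assertions in play at some stage of an $\na{E}$-proof, let $S$ be the set of diagrammatic axioms from Section~\ref{diagram:section}, and suppose the diagrammatic literal $\theta$ is a classical first-order consequence of $\Gamma$ together with $S$.

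First I would reduce to a finite, essentially propositional problem. By compactness, $\theta$ is a classical consequence of $\Gamma$ together with finitely many substitution instances of the clauses in $S$; call this finite set of ground clauses $S_0$. Because the language of $\na{E}$ has no function symbols, every term appearing in $\Gamma$, $S_0$, or $\theta$ is simply a variable, and so the entailment of $\theta$ from $\Gamma$ and $S_0$ can be read as a purely propositional problem over the finitely many atomic formulas built from those variables.

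Next I would apply the previous proposition in its classical form: there is a proof of $\Rightarrow \theta$ in the intuitionistic sequent calculus augmented with the atomic case-split rule, using the contrapositive variants of the clauses in $S_0$ and the initial sequents $\Rightarrow \psi$ for $\psi \in \Gamma$ as premises. I would then translate this proof into an $\na{E}$-derivation by induction on its structure. Maximal sub-proofs that use only the intuitionistic machinery correspond, by Proposition~\ref{direct:equiv:prop}, to direct consequences of the diagrammatic literals currently available, and hence can be packaged as single diagrammatic-inference steps in $\na{E}$. Each classical case-split node, splitting on an atomic formula $A$, is translated into a single application of $\na{E}$'s proof-by-cases rule, with the two children supplied by the inductive hypothesis applied to the $A$ and $\lnot A$ branches. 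Since no new geometric objects are introduced along the way, the existential prefix on each sequent remains inert, and the final derivation has the shape $\Gamma \Rightarrow \theta$ required by the theorem.

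The main obstacle is bookkeeping rather than a deep conceptual leap. The delicate points are: checking that the contrapositive-variant initial sequents used in the propositional proof are faithfully mirrored by the diagrammatic-inference rule as given by our notion of direct consequence; verifying that the atomic case splits lift coherently to $\na{E}$'s case-split rule, which is formulated on sequents carrying an existential prefix, so that one must confirm that the propositional instantiations in $S_0$ do not force the introduction of new variables outside the scope of the proof; and confirming that the reduction to the ground propositional problem is harmless because the free variables of $\Gamma$ and $\theta$ behave as implicit universal parameters. Once these details are pinned down, the induction itself is entirely routine.
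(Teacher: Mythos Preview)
Your proposal is correct and follows essentially the same route as the paper. The paper itself gives no standalone proof: it simply observes that, because the language has no function symbols and only finitely many relation symbols, the first-order problem collapses to a propositional one over the finitely many atoms built from the variables in $\Gamma$ and $\theta$, so that the preceding proposition (classical consequence equals intuitionistic-plus-atomic-case-splits) applies directly; your compactness/Herbrand reduction and inductive translation of case splits into $\na{E}$'s case-split rule just make this explicit.
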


\begin{theorem}
  Let $\Gamma$ be a set of diagrammatic assertions involving at most
  $n$ points, lines, and circles. Whether or not a particular literal
  is a direct diagrammatic consequence of $\Gamma$ can be
  determined in time polynomial in $n$.
\end{theorem}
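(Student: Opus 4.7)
The plan is to reduce to the propositional case handled by the earlier proposition. The language of $\na{E}$ has no function symbols and a fixed finite list of relation symbols of bounded arity. So if $\Gamma$ mentions at most $n$ geometric objects (variables), then the total number of atomic formulas — and hence literals — that can be formed from these $n$ variables is bounded by $O(n^c)$ for some constant $c$ (the maximum arity, which is at most $3$ for relations like $\mybetween$ and $\sameside$). In particular, $\Gamma'$ can grow by at most polynomially many literals before stabilizing.

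Next I would observe that only substitution instances of diagrammatic axioms into the $n$ variables appearing in $\Gamma$ need ever be considered. The reason is that each clause of Section~\ref{diagram:section} has the property that every variable appearing in it appears in some hypothesis-literal — there is no ``fresh'' variable that appears only in one place. Consequently, whenever our inference rule fires on a substitution instance because all but one of the literals are already refuted in $\Gamma'$, every variable occurring in the newly derived literal already occurs in $\Gamma'$, hence (by induction on the closure process) in $\Gamma$. Since each axiom has a bounded number $k$ of variables, the number of substitutions of axiom-variables into the $n$ variables of $\Gamma$ is $O(n^k)$ per axiom, and summing over the finite list of axioms gives a polynomial bound on the number of relevant substitution instances.

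With these two observations in hand, the algorithm is essentially the same forward-chaining procedure used in the propositional case: initialize $\Gamma' := \Gamma$; repeatedly scan through the polynomially many substitution instances of the diagrammatic axioms, looking for one in which all but one literal already has its negation in $\Gamma'$; when one is found, add the remaining literal to $\Gamma'$; halt when $\theta$ has been added, or $\Gamma'$ contains a literal and its negation (in which case everything is a direct consequence), or no new literal can be added. Each scan takes time polynomial in $n$ (testing membership of each candidate literal in $\Gamma'$), and since at least one literal is added per iteration and only $O(n^c)$ literals exist, there are at most $O(n^c)$ iterations. The total running time is therefore polynomial in $n$.

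The main subtlety — and the one step that really needs checking rather than being purely routine — is the ``no fresh variables in the conclusion'' property of the axioms, since it is what prevents the closure process from generating substitution instances over an ever-growing universe and keeps the state space polynomially bounded. Once that inspection is done, the propositional argument carries over essentially verbatim, with substitution instances of axioms playing the role that clauses played before.
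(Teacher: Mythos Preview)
Your approach is essentially the paper's: with no function symbols and finitely many relations of bounded arity, there are only polynomially many literals over the $n$ variables and polynomially many substitution instances of the fixed finite list of axioms, so the earlier propositional bound applies directly. The paper's own argument is just this brief observation.

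The one place that needs a correction is exactly the step you flagged. The property you claim---that in every diagrammatic clause each variable appears in at least two literals, so that the derived literal never introduces a fresh variable---fails for the reflexivity axiom $x = x$, which is a single-literal clause. The forward closure can therefore in principle add $v = v$ for a fresh $v$. That particular exception is harmless, but a cleaner justification that avoids axiom-by-axiom inspection is a substitution argument: given any derivation of $\theta$ from $\Gamma$ that uses auxiliary variables outside the original $n$, uniformly replace each such variable by some fixed variable among the $n$. Since $\Gamma$ and $\theta$ mention only the original variables they are unchanged, and every substituted step remains a valid instance of the same axiom clause, so the result is a derivation of $\theta$ using only the original $n$ variables. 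This shows that restricting to substitution instances over those $n$ variables loses nothing, without relying on any special structural feature of the axiom list.
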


Note that ``polynomial-time computable'' need not mean feasible in
practice. Since ``$\mybetween$'' is a ternary relation, with ten
points, for example, we have to keep track of a thousand potential
betweenness assertions. On the other hand, experiments described in
Section~\ref{implementation:section} suggest that even the full set of
quantifier-free consequences can be feasibly obtained for reasonable
diagrams, so that our system should be practically implementable as
well.

We should also provide an account of what it means to be a direct
metric consequence. It would be perhaps most faithful to Euclid to add
a finite list of variants extending the list of axioms given in
Section~\ref{metric:section}, allowing one to add equal segments to a
segment in either order, and so on. But recognizing $\seg{ab}$ and
$\seg{ba}$ as the same quantity, or $\seg{ab} + \seg{cd}$ and
$\seg{cd} + \seg{ab}$ as the same quantity, should not need explicit
justification; in general, a prover should be allowed to identify
terms up to associativity, commutativity, and symmetric
transformations without further comment. There are very simple
computational devices that make this easy to implement in practice
\cite{dershowitz:plaisted:01}, and it is the kind of thing we (like
Euclid) take for granted, and so we take these to be built into
$\na{E}$.

In fact, we would not be doing too much damage to Euclid if we allowed
\emph{any} metric consequence of previous metric facts to be inferred
in one step. This, too, has an easy computational implementation. As
noted above, the theory is just the universal fragment of the theory
of linearly ordered groups. Decision procedures for this theory have
been studied extensively, and at the level of complexity one finds in
Euclid's proofs, even the naive ``Fourier-Motzkin'' algorithm performs
quite well in practice. (See \cite{bockmayr:weispfenning:01} for an
overview of such methods.)

Finally, to handle the transfer axioms, we allow the prover to assert,
in one step, the conclusion of any single rule where the hypotheses
are all direct diagrammatic or metric consequences of the available
data, i.e.~the diagrammatic and metric assertions in $\Gamma, \Delta$.
Note that almost all these axioms can be described by clauses where
exactly one of the literals is a metric assertion. (The exception is
the third diagram-angle transfer axiom, which characterizes the notion
of a ``right angle'' by stating an equivalence between two metric
assertions in the context of some diagrammatic information. But this
could be replaced by the Euclidean theorem that if a line is cut by a
transversal, the adjacent angles add up to two right angles.)
Sometimes Euclid takes certain metric information to be so clear from
the diagram that he uses it without asserting it explicitly; these
include, for example, our diagram-angle axiom 4, which asserts that
different descriptions of the same angle have the same magnitude. In
cases like that, one could modify our definition of ``metric
consequence'' so that consequences of the diagram like these are added
to the ``store'' of available metric hypotheses automatically.

This concludes our presentation of $\na{E}$. The fact that there is
room to tinker with our notion of ``direct consequence'' by expanding
or contracting the allowable inferences should help clarify the nature
of our project. In order to show, in
Section~\ref{completeness:section}, that $\na{E}$ is sound and
complete with respect to the relevant ``ruler and compass'' semantics,
our one-step inferences have to be sound, and the full proof system
has to be complete. This gives us a lot of latitude in defining the
``one-step'' inferences. The fact that soundness and completeness do
so little to constrain our choice shows that we are trying to capture
something more fine-grained than the entailment relation for Euclidean
geometry.  Rather, we are trying to understand Euclidean \emph{proof},
which requires an understanding of the sorts of inferences that are
taken to be basic in the \emph{Elements}. So, where Euclid draws an
immediate conclusion from the data available in a proof, it should be
possible to carry out that inference in one step, or at most a few
steps, in our formal system.  On the other hand, in cases where Euclid
invokes a chain of steps to reach a conclusion, our system should
\emph{not} sanction that inference as ``direct.'' The extent to which
our system meets these constraints is the subject of the next section.

Ziegler \cite{ziegler} has shown that the notion of validity for
ruler-and-compass semantics is undecidable. (His proof shows that the
set of $\forall\exists\forall$ consequences of any finitely
axiomatized fragment of the theory of real closed fields is
undecidable. It is, however, still an open question whether the set of
$\forall\exists$ consequences, which correspond to the geometric
assertions that can be expressed in $\na{E}$, is decidable.) It is
therefore interesting to note that, in principle, one can expand our
notion of ``direct consequence'' dramatically and maintain
decidability:

\begin{theorem}
  The question as to whether a given literal is a first-order
  consequence of a finite set of literals and the set of all our
  diagrammatic, metric, and transfer axioms is decidable.
\end{theorem}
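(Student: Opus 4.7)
The plan is to reduce the question to a combined satisfiability problem that becomes decidable by coupling propositional decision with the decidable quantifier-free theory of ordered abelian groups. Let $T$ denote the set of all diagrammatic, metric, and transfer axioms, $\Gamma$ the given finite set of literals, and $\theta$ the literal in question. I first observe that every axiom in $T$ is universal, and that $\Gamma \cup \{\lnot\theta\}$ is a finite set of quantifier-free formulas over the finitely many free variables appearing in it (which we now treat as constants). So ``$T \cup \Gamma \models \theta$'' is equivalent to the unsatisfiability of $T \cup \Gamma \cup \{\lnot\theta\}$, and since $T$ is universal, a standard substructure argument shows that this finite set is satisfiable iff it has a model whose sorts are generated by the constants appearing in $\Gamma \cup \{\theta\}$, together with at most one fresh witness per sort to handle vacuity in axioms whose variables are unrepresented.

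I would then analyze that generated structure. In the sorts for points, lines, and circles the language carries no function symbols, so the generated substructure in each such sort is just the finite set of constants from $\Gamma \cup \{\theta\}$; only finitely many ground atomic diagrammatic assertions arise. In the magnitude sorts the generating operations are the point-indexed functions $\segment$, $\myangle$, $\myarea$ (applied to the finite stock of point constants) together with $+$, $0$, and $\rightangle$, yielding a finitely generated set of basic magnitude terms closed under $+$. Substituting into the axioms, ground instances of the diagrammatic axioms become finitely many propositional clauses over the finite collection of diagrammatic atoms; ground instances of the transfer axioms become finitely many implications of the form ``conjunction of diagrammatic atoms $\limplies$ metric atom'' over our basic magnitudes; and the metric axioms together with the metric literals of $\Gamma \cup \{\lnot\theta\}$ give a constraint system in the universal theory of $\langle \RR^+, 0, +, < \rangle$.

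To decide the combined system I would apply a Nelson---Oppen style procedure: enumerate the finitely many truth assignments to the ground diagrammatic atoms; discard any assignment that falsifies a diagrammatic clause or a diagrammatic literal of $\Gamma \cup \{\lnot\theta\}$; and for each surviving assignment collect the induced metric constraints (the triggered consequents of the ground transfer implications together with the metric literals of $\Gamma \cup \{\lnot\theta\}$) and check their satisfiability in the quantifier-free theory of ordered abelian groups, which is well-known to be decidable, e.g.\ by Fourier---Motzkin elimination. If every assignment yields an unsatisfiable metric problem, then $\theta$ is a first-order consequence of $T \cup \Gamma$; otherwise the combined witness produces a model in which $T \cup \Gamma$ holds but $\theta$ fails. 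The main obstacle will be pinning down the substructure/Herbrand reduction carefully: one must verify that the closure of the axioms under instantiation over the constants of $\Gamma \cup \{\theta\}$ plus a single fresh witness per sort already accounts for all relevant entailments, so that no further ground instances can be needed to refute satisfiability. Once that is in place, the decidability conclusion follows by combining propositional decision with the decision procedure for ordered abelian groups.
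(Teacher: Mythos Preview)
Your approach is essentially the same as the paper's: reduce to satisfiability, case-split exhaustively on the finitely many ground diagrammatic atoms over the constants in $\Gamma\cup\{\theta\}$, and for each surviving assignment feed the residual metric constraints to a decision procedure for linear arithmetic (the universal theory of ordered abelian groups). The paper's proof is terser---it simply observes that modulo a complete diagrammatic assignment every ground axiom instance becomes a quantifier-free formula over the metric sorts---but the structure is identical.

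One small correction: your description of the transfer axioms as ``conjunction of diagrammatic atoms $\limplies$ metric atom'' is not quite right. Several transfer axioms (e.g.\ the parallel postulate, or the biconditionals characterizing $\on$ and $\inside$ for circles in terms of radii) have metric hypotheses and diagrammatic conclusions. This does not damage your argument, since after fixing a complete diagrammatic assignment each ground transfer clause still reduces to a disjunction of metric literals (or to $\top$ or $\bot$); but the constraints you collect must include those arising contrapositively, not just ``triggered consequents.'' Also, your worry about fresh witnesses and the Herbrand reduction is more caution than the problem requires: since the diagrammatic language has no function symbols and the metric part is handled wholesale by the linear-arithmetic decision procedure, instantiation over the constants already present suffices.
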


\begin{proof}
  The problem is equivalent to determining whether a finite set
  $\Gamma$ of literals is consistent with the diagrammatic, metric,
  and transfer axioms. Write $\Gamma = \Pi \cup \Theta$ where $\Pi$
  consists of the diagrammatic literals and $\Theta$ consists of the
  metric literals. By splitting on cases, we can assume without loss
  of generality that for every diagrammatic atomic formula $\ph$
  involving the variables occurring in $\Gamma$, either $\ph$ or
  $\lnot \ph$ is in $\Pi$. There are, moreover, only finitely many
  substitution instances of the axioms in question with the variables
  occurring in $\Gamma$. Modulo $\Pi$, all these axioms are equivalent
  to quantifier-free formulas over the metric sorts. We can then use a
  decision procedure for linear arithmetic to decide whether the
  resulting set of formulas, together with $\Theta$, is satisfiable.
\end{proof}

This means that if decidability, soundness, and completeness for
ruler-and-compass semantics were the only constraints, we could take
proofs in $\na{E}$ to be nothing more than a sequence of construction
steps, followed by ``Q.E.D.'' (or ``Q.E.F.''). Due to the case splits,
however, this naive algorithm runs in exponential time, and will be
infeasible in practice.

\section{Comparison with the \emph{Elements}}
\label{comparison:section}

In this section, we argue that $\na{E}$ provides an adequate modeling
of the proofs in Books I--IV of the \emph{Elements}, according to the
criteria presented in Section~\ref{characterizing:section}. In
Section~\ref{formal:language:section} we focus on the language of the
\emph{Elements}, and in Section~\ref{examples:section} we present some
examples to illustrate how Euclid's proofs are represented in
$\na{E}$. In Section~\ref{departures:section}, we explore some of the
ways in which proofs in $\na{E}$ differ from Euclid's, and in
Section~\ref{postulates:section} we compare our axiomatic basis to
his. Finally, Section~\ref{technical:section} provides a few more
examples of proofs, some of a technical nature, that will be needed in
our completeness proof in Section~\ref{completeness:section}.

\subsection{Language}
\label{formal:language:section}

We begin with a discussion of the language of the
\emph{Elements}. Since we have chosen a fairly minimal language for
$\na{E}$, we need to fix some conventions for interpreting the less
regimented and more expansive language in Euclid. For example, in the
\emph{Elements}, Euclid takes lines to be line segments, although
postulate 2 (``to produce a finite straight line continuously in a
straight line'') allows any segment to be extended
indefinitely. Distinguishing between finite segments and their
extensions to lines makes it clear that at any given point in a proof,
the diagrammatic information is limited to a bounded portion of the
plane. But, otherwise, little is lost by taking entire lines to be
basic objects of the formal system. So where Euclid writes, for
example, ``let $a$ and $b$ be points, and extend segment $ab$ to
$c$,'' we would write ``let $a$ and $b$ be distinct points, let $L$ be
the line through $a$ and $b$, and let $c$ be a point on $L$ extending
the segment from $a$ to $b$.'' Insofar as there is a fairly
straightforward translation between Euclid's terminology and ours,
we take such differences to be relatively minor.

Our basic diagrammatic terms include words like ``on,'' ``between,''
``inside,'' and ``same side.'' It is worth noting that such words
rarely occur explicitly in the \emph{Elements}. Diagrammatic
assertions are sometimes implicitly present in the result of a
construction; in the example of the last paragraph, we use ``$b$ is
between $a$ and $c$'' to represent one of the outcomes of the
diagrammatic construction. Euclid also sometimes uses the physical
diagram to convey a diagrammatic assertion. For example, in the first
proof in Section~\ref{examples:one:section}, the diagram shows that
point $d$ is on $ab$. Diagrammatic information is also implicit in
some of Euclid's more complicated locutions; for example, we need to
analyze the Euclidean assertion ``$abc$ is a triangle'' in terms of
our more basic primitives. But, overall, it is remarkable how
\emph{little} diagrammatic information needs to be asserted in the
text. One striking exception occurs in conveying the diagrammatic
notion of being parallel (which we model with the diagrammatic
predicate ``does not intersect''): there is no way to represent the
\emph{non}intersection of two lines in a diagram, and so Euclid uses
the term ``parallel'' explicitly in Propositions 27--47 of Book I to
make the assertion.

Modeling Euclid's limited use of explicit diagrammatic assertions has
been a central goal in the design of $\na{E}$. Although one is allowed
to enter diagrammatic assertions like ``$a$ is between $b$ and $c$''
and ``$a$ and $b$ are on the same side of $L$'' in proofs in $\na{E}$,
the point is that often one does not need to. For example, if the fact
that $b$ is between $a$ and $c$ is a direct consequence of
diagrammatic assertions in the hypotheses of the theorem and previous
construction steps, then, using a transfer axiom, one can simply
assert that $\seg{ab} + \seg{bc} = \seg{ac}$, without further
justification. Thus our choice of diagrammatic primitives was
designed, primarily, to function internally, and keep track of the
information that is required to license construction steps and
explicit metric inferences.

(We remind you that, in contrast to Tarski's and Hilbert's axiomatizations
of geometry, we use $\mybetween(a,b,c)$ to denote that $b$ is
\emph{strictly} between $a$ and $c$. This choice makes our
translation, in Section~\ref{completeness:section}, to a formal system
based on Tarski's axioms slightly more complicated. On the other hand,
it does seem to correspond more closely to Euclidean practice; see the
discussion in Section~\ref{nondegeneracy:section}. Interestingly, as
noted in Section~\ref{implementation:section} below, it also seems to
provide better performance in implementations.)

\newcommand{\f}{\mathit{f}}

Having discussed our choice of diagrammatic primitives, we comment
briefly on our modeling of metric assertions. In the Heath translation
of Euclid, one finds phrases like ``the base $ab$ is equal to the base
$de$,'' ``angle $abc$ is greater than angle $de\f$,'' and ``angles
$abc$, $cbd$ are equal to two right angles.'' We model these in our
formal system with the metric assertions $\seg{ab} = \seg{de}$,
$\angle abc > \angle de\f$, and $\angle abc + \angle cbd = \rightangle
+ \rightangle$. In reasoning about such quantities, Euclid uses basic
properties of an ordered group. For example, in the middle of the text of
Proposition I.13, we find:
\begin{quote}
  \ldots since the angle $dba$ is equal to the two angles $dbe, eba$,
  let the angle $abc$ be added to each; therefore the angles $dba,
  abc$ are equal to the three angles $dbe, eba, abc$. But the angles
  $cbe, ebd$ were proved equal to the same three angles; and things
  which are equal to the same thing are equal to one another;
  therefore the angles $cbe, ebd$ are also equal to the angles $dba,
  abc$. \cite[p.~275]{euclid}
\end{quote}
In our system, this sequence of assertions would be represented as
follows:
\begin{quote}
$\angle dba = \angle dbe + \angle eba$ \\
$\angle dba + \angle abc = \angle dbe + \angle eba + \angle abc$ \\
$\angle cbe + \angle ebd = \angle dbe + \angle eba + \angle abc$ \\
$\angle cbe + \angle ebd = \angle dba + \angle abc$
\end{quote}
In the example, the first assertion is a metric consequence of
diagrammatic information, namely that the point $e$ is in the interior
of the angle $dba$. The third assertion is echoed from earlier in the
proof, and the other two are obtained using axioms of
equality. Even though Euclid does not use a symbol for addition or the
word ``sum,'' it is clear from the text that his usage of magnitudes
``taken together'' is modeled well by the modern notions.

Other locutions found in Euclid can be modeled as ``definitional
extensions'' of $\na{E}$. For example, consider the phrase ``let $abc$
be a triangle.'' Assuming we take this to mean a nondegenerate
triangle, we parse this as saying that $a$, $b$, and $c$ are points, and
there are lines $L$, $M$, and $N$, such that $a$ and $b$ are on $L$
but $c$ is not, $b$ and $c$ are on $M$ but $a$ is not, and $c$ and $a$
are on $N$ but $b$ is not. Furthermore, the Euclidean phrase ``let
$ab$ be produced to $d$'' involves picking a point $d$ on $L$
extending the segment from $a$ to $b$, and so on. Adequate modeling of
Euclidean talk of triangles thus involves introducing mild forms of
``syntactic sugar'' to $\na{E}$.

When it comes to areas, we have only introduced a primitive for the
area of a triangle. Books I to IV also deal with areas of
parallelograms (including squares and rectangles) and, in the proof of
Proposition I.35, a trapezoid. One could introduce a new primitive to
denote the area of a convex quadrilateral (convexity can be defined in
the language of $\na{E}$), with appropriate axioms. Alternatively, one
can define the area of a convex quadrilateral $abcd$ to be the sum of
the areas of triangle $abc$ and $acd$, and then introduce the
requisite properties as ``derived rules.'' Extending $\na{E}$ to
handle the area of arbitrary convex polygons (that is, convex polygons
with an arbitrary number of sides) would require a more dramatic
extension, but this notion never arises in the \emph{Elements}.

One can prove in $\na{E}$ that one can pick an arbitrary point in a
triangle, say, or in a rectangle, but these facts require proof, even
though they are diagrammatically obvious. To our knowledge, however,
Euclid never does this. To model subsequent developments in
geometry, one would probably need to extend $\na{E}$ with a uniform
treatment of convex figures.

There are a number of concepts found in later books of the
\emph{Elements} that we have not incorporated into $\na{E}$.  For
example, Book V introduces the notion of multiples and ratios;
propositions in Book VI refer to arbitrary polygons; and Book VII,
which introduces elementary number theory, refers to arbitrary
(finite) collections of numbers. It would be interesting to extend
$\na{E}$ to model the Euclidean treatment of such concepts as well.

In our formulation of $\na{E}$, one is allowed to carry out arguments
by case splits on an atomic formula. Case splits in Euclid can be
slightly more expressive; for example, knowing that angles $abc$ and
$abd$ do not coincide, Euclid may consider the two cases $abc < abd$
and $abc > abd$. We would model this by first splitting on the
assertion $\angle abc < \angle abd$; then in the case $\angle abc
\not< \angle abd$, we would employ a second case split on the
predicate $\angle abc = \angle abd$, the positive instance which has
already been ruled out. We maintain that all case arguments occurring
in the first four books of the \emph{Elements} can be obtained in this
way, using a sequence of atomic splits to obtain an exhaustive list of
possibilities (e.g.~if $a$ is a point not on a line $L$, then another
point $b$ is either on the same side of $L$ as $a$, on $L$, or on the
opposite side of $L$), some of which are ruled out immediately
(implying $\bot$, and hence the desired conclusion right away). Once
again, mild forms of ``syntactic sugar'' would allow one to express
these case splits more compactly, resulting in proofs in $\na{E}$ that
more closely model the ones in Euclid.

When different diagrammatic configurations are possible, Euclid will
sometimes prove only one case. Often this case is truly ``without loss
of generality,'' which is to say, the other case (or cases) are
entirely symmetric. In $\na{E}$, strictly speaking, we would have to
repeat the proof; but one could introduce a syntactic term,
``similarly,'' to denote such a repetition. However, as Heath points
out repeatedly, Euclid often proves only the most difficult case of a
proposition and omits the others, even though they may require a
different argument; indeed, much of Proclus' commentary is devoted to
supplying proofs of the additional cases (see, for example, the notes
to Propositions 2, 7, 25, and 35 in \cite[Book I]{euclid}).  Of
course, in cases like this $\na{E}$ requires the full argument.  There
is no reasonable syntactic account of the phrase ``left to reader,''
and we do not purport to provide one.

\subsection{Examples of proofs in $\na{E}$}
\label{examples:section}

In this section, we provide some examples of proofs in our formal
system $\na{E}$, assuming the kinds of ``syntactic sugar'' described
in the last section. We include diagrams to render the proofs
intelligible, but we emphasize that they play no role in the formal
system. To improve readability, we use both the words ``Have'' and
``Hence'' to introduce assertions, generally using ``Have'' to
introduce new metric assertions that are inferred from the diagram,
and``Hence'' to introduce assertions that follow from previous metric
assertions. But these words play no role in the logical system; all
that matters are the actual assertions that follow. For the sake of
intelligibility, we also sometimes add comments, in brackets. Once
again, these play no role in the formal proof. Since the point of this
exercise is to demonstrate that proofs in $\na{E}$ are faithful to the
text of the \emph{Elements}, we recommend comparing our versions with
Euclid's.

Proposition 1 of Book I requires one, ``on a given straight line,
to construct an equilateral triangle.''

\newtheorem*{Prop1}{Proposition I.1}

\begin{Prop1}
\label{prop_I.1}
\ \\
Assume $a$ and $b$ are distinct points.\\
Construct point $c$ such that $\seg{ab} = \seg{bc}$ and $\seg{bc} =
\seg{ca}$.
\end{Prop1}

\begin{center}
\psfrag{c}{$c$}\psfrag{z}{$\alpha$}\psfrag{a}{$a$}\psfrag{b}{$b$}\psfrag{y}{$\beta$}
\includegraphics[height=2cm]{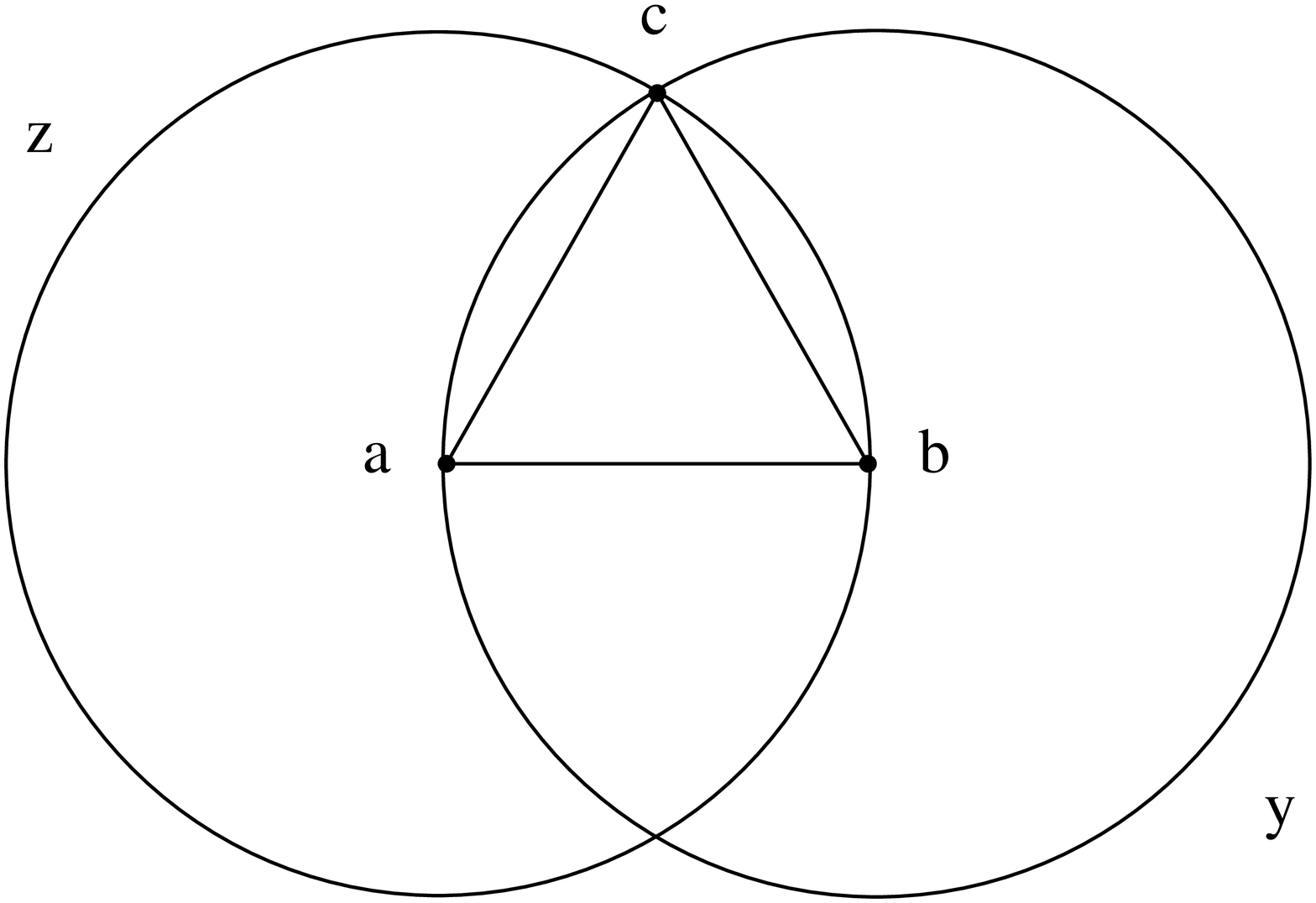}
\end{center}

\begin{proof}
Let $\alpha$ be the circle with center $a$ passing through $b$.\\
Let $\beta$ be the circle with center $b$ passing through $a$.\\
Let $c$ be a point on the intersection of $\alpha$ and $\beta$.\\
Have $\seg{ab} = \seg{ac}$ [since they are radii of $\alpha$].\\
Have $\seg{ba} = \seg{bc}$ [since they are radii of $\beta$].\\
Hence $\seg{ab} = \seg{bc}$ and $\seg{bc} = \seg{ca}$.\\
Q.E.F.
\end{proof}

The hypotheses tell us only that $a$ and $b$ are distinct points, but
this is enough to license the construction of $\alpha$ and $\beta$, by
rule 2 of the construction rules for lines and circles. Rule 5 of
diagram rules for intersections gives us the diagrammatic fact that
$\alpha$ and $\beta$ intersect. Rule 6 of the construction rules for
intersection then allows us to pick a point of intersection. Rule 3 of
the diagram-segment transfer axioms then allows us to conclude that
the given segments are equal, since they are radii of the two circles.
Using metric inferences (the symmetry of line segments and
transitivity of equality) gives us that $ab = bc = ca$.

Our proof does not establish, per se, that $c$ is distinct from $a$
and $b$, and this \emph{is} an assumption that Euclid uses freely when
applying the theorem. Fortunately, this is an easy metric consequence.

\newtheorem*{AuxE1}{Auxiliary to Proposition I.1}

\begin{AuxE1}
\ \\
  Assume $a$ and $b$ are distinct points, $\seg{ab} = \seg{bc}$, and
  $\seg{bc} = \seg{ca}$.\\
  Then $c \neq a$ and $c \neq b$.
\end{AuxE1}

\begin{proof}
Suppose $c = a$.\\
\hspace*{10pt} Hence $a = b$.\\
\hspace*{10pt} Contradiction.\\
Hence $c \neq a$.\\
Suppose $c = b$.\\
\hspace*{10pt} Hence $a = b$.\\
\hspace*{10pt} Contradiction.\\
Hence $c \neq b$.\\
Q.E.D.
\end{proof}

To show that $c$ is distinct from $a$, we suppose, to the contrary,
that $c = a$. Then direct metric inferences give us $\seg{ac} = 0$,
$\seg{ab} = 0$, and $a = b$, which is a contradiction. (We use the word
``Contradiction'' for ``Hence False.'') The fact that $c$ and $b$ are
distinct is proved in the same way.

A more faithful rendering of the proposition might assume ``Let $a$
and $b$ be distinct points on a line, $L$,'' and then also construct
the remaining lines $M$ and $N$ of the triangle. If one uses
Proposition I.1 as we initially stated it, one can simply construct
$M$ and $N$ afterwards. Euclid also, however, sometimes needs the fact
that $c$ is not on the line determined by $a$ and $b$. Once again, by
$\na{E}$'s lights, this requires a short argument.

\begin{AuxE1}
\label{prop_I.1_aux}
\ \\
Assume $a$ and $b$ are distinct points, $a$ is on $L$, $b$ is on $L$,
and $\seg{ab} = \seg{bc}$ and $\seg{bc} = \seg{ca}$. 
Then $c$ is not on $L$. 
\end{AuxE1}

\begin{proof}
  \ \\
  Suppose $c$ is on $L$.\\  
  \hspace*{10pt} Suppose $a$ is between $c$ and $b$.\\ 
  \hspace*{20pt} Hence $\seg{ca} < \seg{bc}$. Contradiction.\\
  \hspace*{10pt} Suppose $c = a$.\\
  \hspace*{20pt} Hence $a = b$. Contradiction.\\
  \hspace*{10pt} Suppose $c$ is between $a$ and $b$.\\ 
  \hspace*{20pt} Hence $\seg{ca} < \seg{ab}$. Contradiction.\\
  \hspace*{10pt} Suppose $c = b$.\\
  \hspace*{20pt} Hence $a = b$. Contradiction.\\
  \hspace*{10pt} Suppose $b$ is between $a$ and $c$.\\
  \hspace*{20pt} Hence $\seg{ab} < \seg{bc}$. Contradiction.\\
  Contradiction.\\
  Q.E.D.
\end{proof}

If $a$ and $b$ are distinct points on a line, Euclid often splits
implicitly or explicitly on cases depending on the position of a point
$c$ relative to $a$ and $b$. Strictly speaking, the proof above could
be expressed as a sequence of four nested case splits on atomic
formulas. As noted in the previous section, we can take the proof
above to rely on notational conventions, for readability.

When it is easy to rule out some cases, Euclid often does not say
anything at all, where our rules may require a line or two. The fact
that Euclid doesn't say anything to justify the nondegeneracy of the
triangle constructed in Proposition I.1, where $\na{E}$ requires some
(easy but) explicit metric considerations, is a more dramatic
difference, and is discussed in Section~\ref{departures:section}.
There, in fact, we note that in the proof of Proposition I.9, Euclid
seems to need a slight strengthening of our Proposition I.1, which
asserts that $c$ can be chosen on either side of the $L$ through $a$
and $b$. This is easily obtained using rule 8 instead of rule 6 of the
construction rules for intersections; one only needs to take the
trouble to make the stronger assertion.

Proposition 2 in Book I of the \emph{Elements} is surprisingly
complicated given that it occurs so early. It is a construction,
requiring one ``to place at a given point a straight line equal to a
given straight line,'' that is, to copy a segment to a given point.
This time, we leave it to you to check that the assertions are
justified by our rules and our notion of direct inference,
providing some hints in the bracketed comments. To simplify the
exposition, we appeal to a version of Proposition I.1 with the
additional distinctness claim.

\newtheorem*{Prop2}{Proposition I.2}

\begin{Prop2}
        \label{prop_I.2}
\ \\
Assume $L$ is a line, $b$ and $c$ are distinct points on $L$, and
$a$ is a point distinct from $b$ and $c$.\\
Construct point $f$ such that $\seg{af} = \seg{bc}$.
\end{Prop2}

\begin{center}
\psfrag{c}{$c$}\psfrag{d}{$d$}\psfrag{a}{$a$}\psfrag{b}{$b$}\psfrag{M}{$M$}\psfrag{N}{$N$}\psfrag{g}{$g$}\psfrag{f}{$f$}\psfrag{z}{$\alpha$}\psfrag{y}{$\beta$}
\includegraphics[height=3cm]{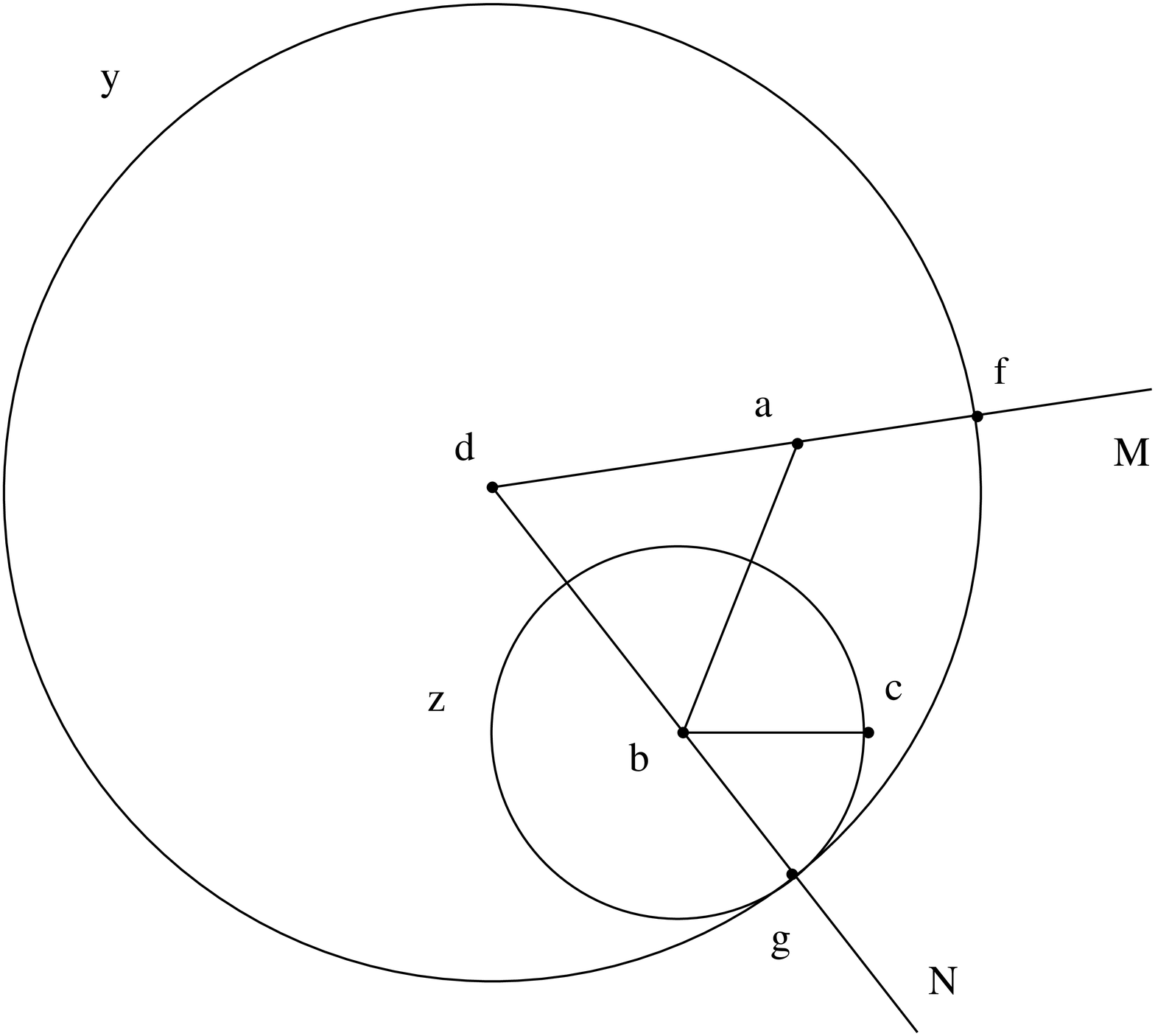}
\end{center}

\begin{proof}
  By Proposition I.1 applied to $a$ and $b$, let $d$ be a point such
  that $d$ is distinct from $a$ and $b$ and $\seg{ab} = \seg{bd}$ and 
    $\seg{bd} = \seg{da}$.\\
  Let $M$ be the line through $d$ and $a$.\\
  Let $N$ be the line through $d$ and $b$.\\
  Let $\alpha$ be the circle with center $b$ passing through $c$.\\
  Let $g$ be the point of intersection of $N$ and $\alpha$
  extending the segment from $d$ to $b$.\\
  Have $\seg{dg} = \seg{db} + \seg{bg}$.\\
  Hence $\seg{dg} = \seg{da} + \seg{bg}$ [since $\seg{da} = \seg{db}$].\\
  Hence $\seg{da} < \seg{dg}$.\\
  Let $\beta$ be the circle with center $d$ passing through $g$.\\
  Hence $a$ is inside $\beta$ [since $d$ is the center and
  $\seg{da} < \seg{dg}$].\\
  Let $f$ be the intersection of $\beta$ and $M$ extending the segment
  from $d$ to $a$.\\
  Have $\seg{df} = \seg{da} + \seg{af}$.\\
  Have $\seg{df} = \seg{dg}$ [since they are both radii of $\beta$].\\
  Hence $\seg{da} + \seg{af} = \seg{da} + \seg{bg}$.\\
  Hence $\seg{af} = \seg{bg}$.\\
  Have $\seg{bg} = \seg{bc}$ [since they are both radii of $\alpha$]. \\
  Hence $\seg{af} = \seg{bc}$.\\
  Q.E.F.
\end{proof}

Notice that the last construction step requires
knowing that $a$ is inside $\beta$. We obtain this, in our proof,
using simple metric considerations. We discuss this fact in the next
section.

Let us consider one more example. You may wish to compare the
following rendering of Proposition I.10 to the one given in
Section~\ref{examples:one:section}. Once again, to simplify the
exposition, we appeal to a version of Proposition I.1 with the
additional noncollinearity claim. The proof also appeals to
Proposition I.9, which asserts that an angle $acb$ can be bisected. We
take this to be the assertion that there is a point $e$ such that
$\angle ace = \angle bce$; with the further property that if $M$ is
the line through $c$ and $a$, and $N$ is the line through $c$ and $b$,
then $e$ and $b$ are on the same side of $M$, and $e$ and $a$ are on
the same side of $N$.  The last requirement could be expressed more
naturally with the words ``$e$ is inside the angle $acb$,'' though
that locution does not make $M$ and $N$ explicit. This requirement
rules out choices of $e$ on the other side of $c$ which satisfy the
same metric conditions.

\begin{Prop10}
        \label{prop_I.10}
        \ \\
        Assume $a$ and $b$ are distinct points on a line $L$.\\
        Construct a point $d$ such that $d$ is between $a$ and $b$ and
        $\seg{ad} = \seg{db}$.
\end{Prop10}

\begin{center}
\psfrag{a}{$a$}\psfrag{b}{$b$}
\psfrag{c}{$c$}\psfrag{d}{$d$}\psfrag{e}{$e$}
\psfrag{L}{$L$}\psfrag{M}{$M$}\psfrag{N}{$N$}\psfrag{K}{$K$}
\includegraphics[height=3cm]{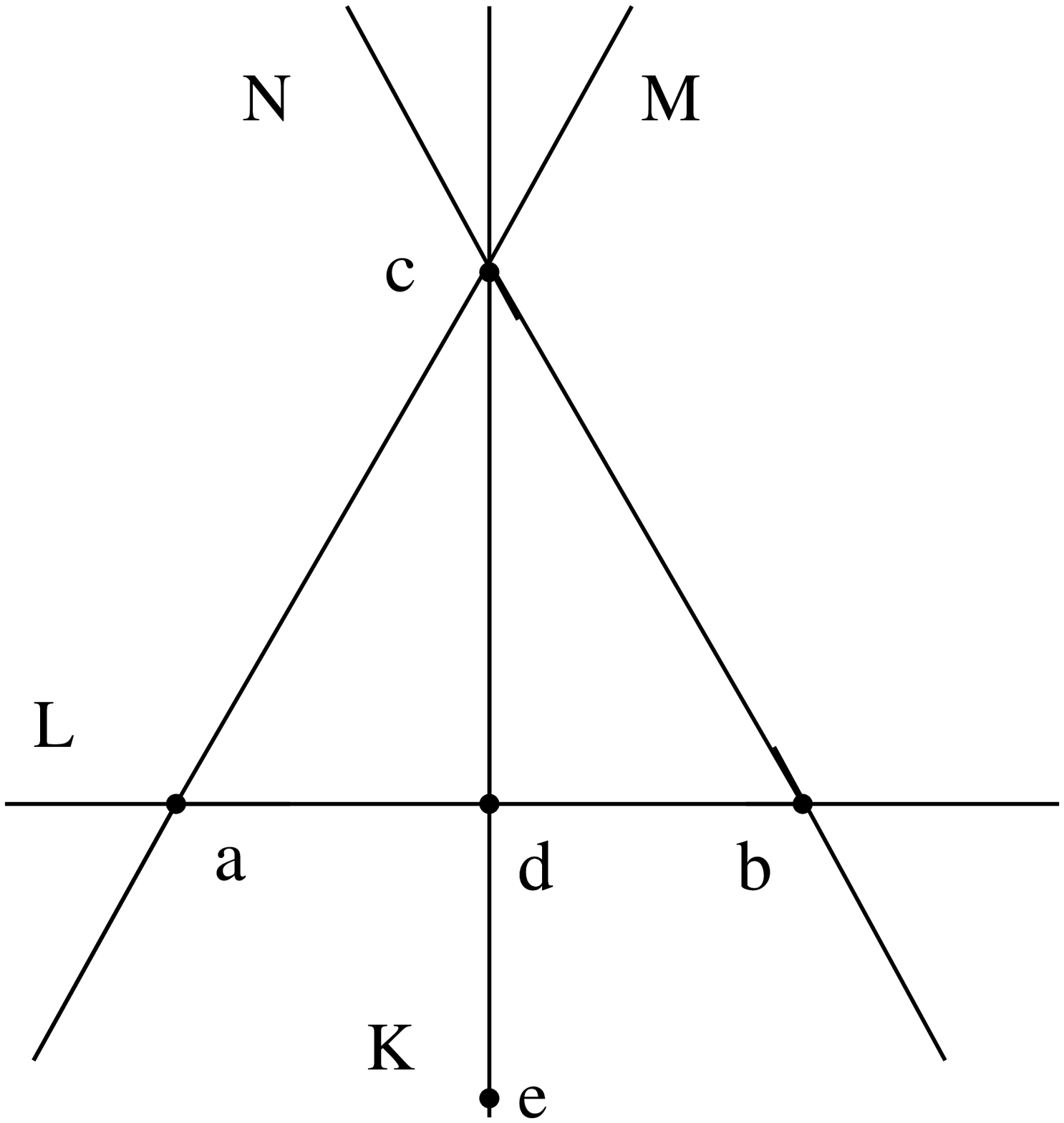}
\end{center}

\begin{proof}
  By Proposition I.1 applied to $a$ and $b$, let $c$ be a point such
  that $\seg{ab} = \seg{bc}$ and $\seg{bc} = \seg{ca}$ and $c$ is not
  on $L$.\\
  Let $M$ be the line through $c$ and $a$.\\
  Let $N$ be the line through $c$ and $b$.\\
  By Proposition I.9 applied to $a$, $c$, $b$, $M$, and $N$, let
    $e$ be a point such that $\angle ace = \angle bce$, $b$ and
    $e$ are on the same side of $M$, and $a$ and $e$ are on the same
    side of $N$.\\
  Let $K$ be the line through $c$ and $e$.\\
  Let $d$ be the intersection of $K$ and $L$.\\
  Have $\angle ace = \angle acd$.\\
  Have $\angle bce = \angle bcd$.\\
  By Proposition I.4 applied to $a$, $c$, $d$, $b$, $c$, and $d$ have
    $\seg{ad} = \seg{bd}$.\\
  Q.E.F.
\end{proof}

As noted in Section~\ref{examples:one:section},
when applying Proposition I.9, Euclid immediately takes $d$ to be the
point of intersection; we need to bisect the angle and then choose the
intersection explicitly. A direct diagrammatic inference yields the
fact that the two lines intersect: the triple incidence axioms imply
that points $a$ and $b$ are on opposite sides of $K$, which serves as
the hypothesis to intersection rule 1. We also need to note that the
angles $acd$ and $bcd$ are then the same as angles $ace$ and $bce$,
which is justified by metric rule 6. The fact
that $d$ is between $a$ and $b$ is again the result of a direct
diagrammatic inference, using Pasch inference 4.

There are some cases where the extent to which formal proofs in
$\na{E}$ match Euclid's is particularly impressive. For example,
Proposition 1 of Book III is ``to find the center of a given circle.''
This may seem strange, since Euclid's definitions seem to suggest that
every circle comes ``equipped'' with its center;\footnote{We are
  grateful to Henry Mendell for pointing this out.} but the
proposition makes it clear that we can be ``given'' a circle on its
own. The fact that we use a relation symbol rather than a function
symbol to pick out the center of a circle makes our formalization of
Proposition III.1 as $\ex a. \mycenter(a,\gamma)$ perfectly natural,
and the proof is essentially Euclid's.

For another example, Proposition 2 of Book III shows that circles are
convex --- more precisely, that the chord of a circle lies inside the
circle. This, too, is somewhat surprising, since that fact seems to be
as obvious as anything one is allowed to ``read off'' from a
diagram. But in $\na{E}$, one needs a proof using metric
considerations, as in Euclid. Thus $\na{E}$ can help ``explain'' some
puzzling features of the \emph{Elements}.

\subsection{Departures from the \emph{Elements}}
\label{departures:section}

In this section, we discuss some instances where proofs in the
\emph{Elements} do not accord as well with the rules of $\na{E}$.
Perhaps unsurprisingly, the most common type of departure involves
cases where Euclid's arguments are not detailed enough, by the
standards of $\na{E}$. Among these cases, two situations are typical:
first, Euclid is sometimes content to consider only one case when
$\na{E}$ demands a case analysis, and, second, Euclid sometimes reads
directly from the diagram a geometric relation which in $\na{E}$ must
be licensed by a transfer rule. We will consider examples of each, in
turn.

\begin{figure}
\begin{center}
\psfrag{e}{$e$}\psfrag{d}{$d$}\psfrag{a}{$a$}\psfrag{f}{$f$}
\includegraphics[height=2.2cm]{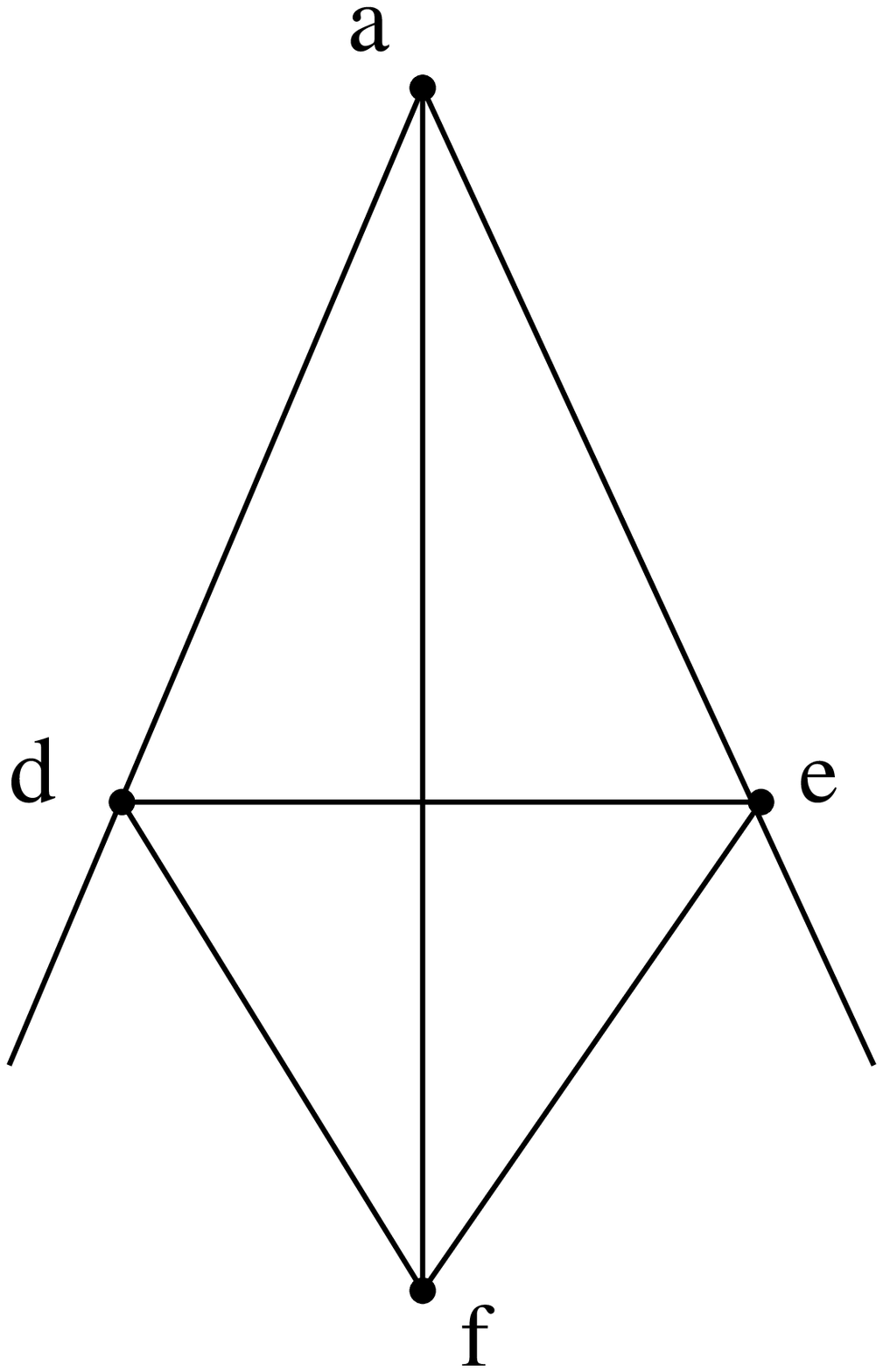} \hspace{2cm} \includegraphics[height=2.2cm]{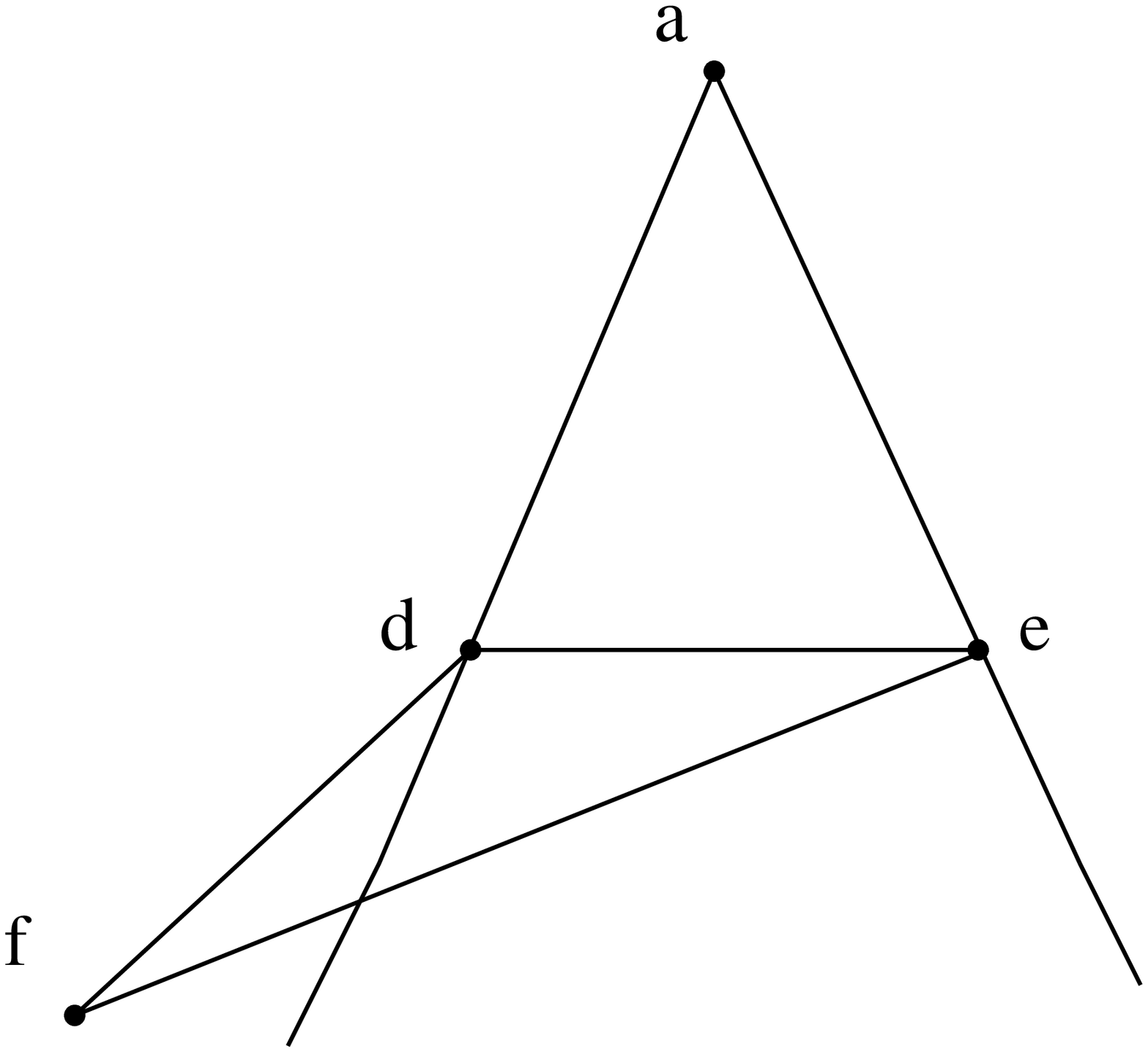} 
\end{center}
\caption{Two cases for Proposition I.9 considered in $\na{E}$. }
\label{prop:nine:figure}
\end{figure}

As pointed out in Section~\ref{proofs:section}, Euclid occasionally
reasons by cases to establish a proposition. When Euclid carries out
such a case analysis, $\na{E}$ typically provides a natural account of
the proof. But when $\na{E}$ demands a case analysis, Euclid does not
always provide one. For an example, consider Euclid's proof of
Proposition 9 in book I.  The proposition is a problem which demands
the construction of an angle bisector (see
Figure~\ref{prop:nine:figure}).  After constructing equal segments
$ad$ and $ae$ on the two sides of the given angle (with vertex $a$),
Euclid joins $d$ and $e$ and constructs on the segment the equilateral
triangle $dfe$.  The vertex $f$ of the triangle is then joined with
the vertex $a$ of the angle, and it is then argued that this segment
bisects the angle.  Euclid takes it as given that the point $f$ falls
within the angle. In $\na{E}$, however, one cannot. Though one may
stipulate that $f$ falls on the side of the segment $de$ opposite the
point $a$, one cannot assume anything about $a$'s position with
respect to the sides of the angle.  One must consider the cases where
$f$ falls on or outside the angle, and show that they are
impossible.\footnote{Vaughan Pratt has pointed out to us the
  contrapositive of Proposition 7 shows that if $ad$ is equal to $ae$,
  $df$ is equal to $ef$, and $d$ and $e$ are distinct, then $d$ and
  $e$ cannot lie on the same side of $af$. This immediately rules out
  two of the cases. But Euclid typically carries out an explicit
  reductio when he needs the contrapositive form of a prior
  proposition. Thus, if that is the proof one has in mind, $\na{E}$
  requires one to do the case split and apply Proposition 7
  explicitly.}

\begin{figure}
\begin{center}
\vspace*{0.5cm}
\includegraphics[height=1.7cm]{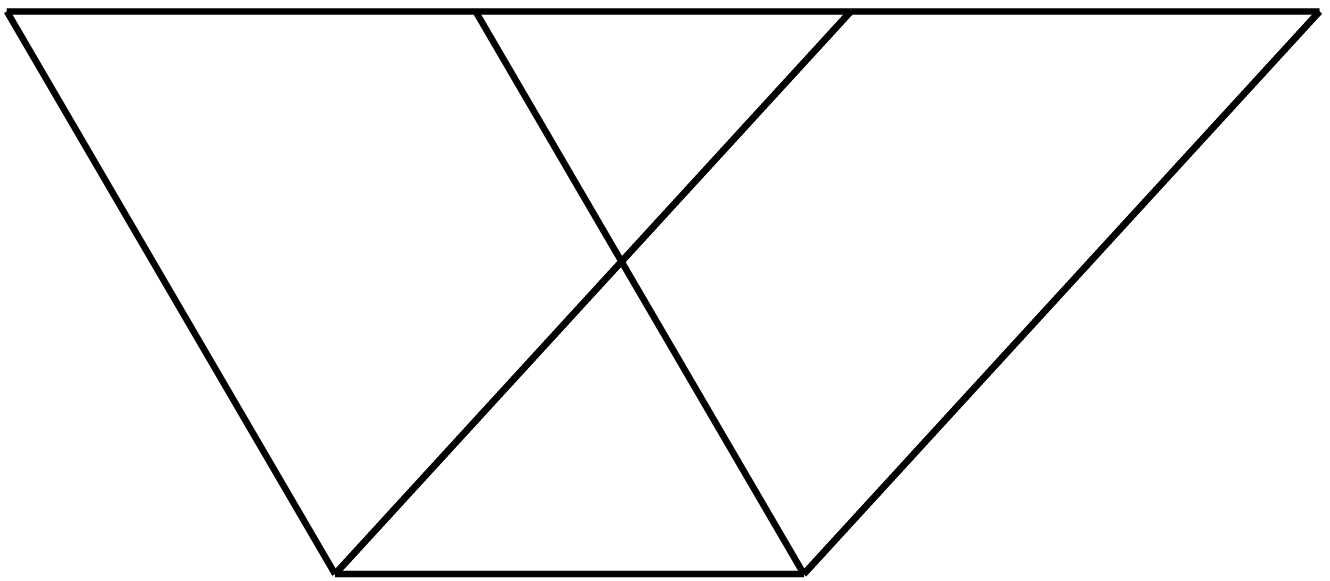}\hspace{1cm}\includegraphics[height=1.7cm]{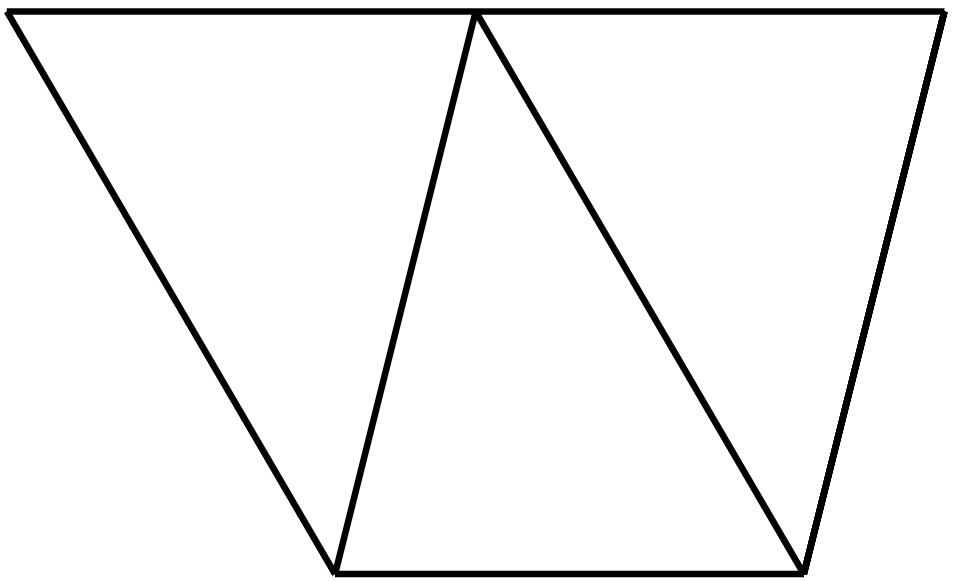}\hspace{1cm}\includegraphics[height=1.7cm]{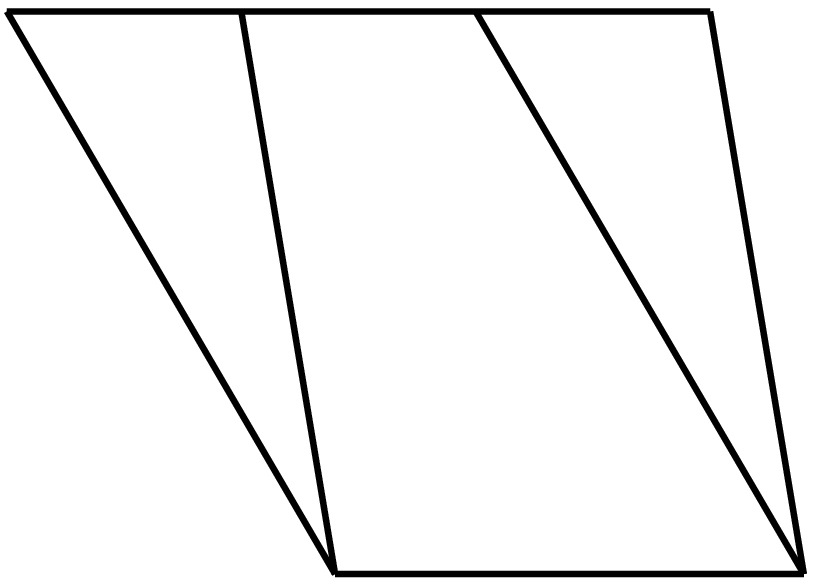} 
\end{center}
\caption{The three cases for Proposition I.35. }
\label{prop:thirty:five:figure}
\end{figure}

Another place where Euclid falls short of meeting $\na{E}$'s standards
for case analysis is Proposition I.35. Whereas with Proposition I.9
the need for a case analysis arises within the construction, with
Proposition I.35 one must start the proof with a case analysis (see
Figure~\ref{prop:thirty:five:figure}).  Euclid's statement of the
proposition is too general for the proof which follows. The
proposition underlies the familiar formula that the area of a
parallelogram is the product of its base and height. It asserts,
specifically, that two parallelograms which have the same base and are
bounded by the same parallel lines have the same area. The proof in
the \emph{Elements}, however, establishes a weaker result, in which
the parallelograms satisfy another condition: the nonintersection of
the sides opposite the common base of the parallelograms. Euclid
groups together into one case the different ways the sides opposite
the base can relate to one another positionally. But the containment
relations which license Euclid's steps in his proof do not generalize
to the other cases, which really require separate proofs.

Proclus, in fact, commented on Euclid's cavalier attitude toward cases
in Propositions I.9 and I.35, and furnished proofs for some of the
cases Euclid neglected. Thus $\na{E}$ is better understood as a
codification of the more critical attitude towards cases found in
Proclus's commentary. It is an interesting question as to why Euclid
is less rigorous in cases like these. One possible explanation is
given by Heath's observation that Euclid only worries about the most
difficult case. Another, which would apply to I.9 but not I.35, is
that the norms governing the physical construction of diagrams
automatically rules out certain possibilities for
Euclid.\footnote{Such norms would enforce what Manders terms
  \emph{diagram discipline}. The idea is as follows. Though physical
  rulers and compasses cannot produce perfectly straight lines and
  circles, a geometer trained in diagram discipline can be trusted to
  produce approximately straight lines and circles in his diagrams.
  For $f$ to lie on or outside the angle $dae$ in I.9, however, one or
  more of the circles used in the construction of $f$ would have to be
  dramatically non-circular.  Euclid would thus be justified in
  disregarding the case as a possibility. See \cite[section 3.1,
  p.~131]{manders:08}, and also the discussion of case branching in
  \cite[Section 1.4, p.~95]{manders:08a}.}

As with $\na{E}$'s rules for case analysis, its transfer
rules can be understood as the articulation of standards observed
intermittently in the \emph{Elements}.  In some constructions, the
possibility of a certain step depends on metric facts assumed of the
configuration. On such occasions, $\na{E}$ requires that a
metric-to-diagram rule be invoked.  Euclid sometimes recognizes the
need for such justifications, and sometimes does not.
 
One place where he does not is in Proposition 2 of Book I. In terms of
the $\na{E}$ proof given in Section~\ref{examples:section}, Euclid
does not provide any argument that the point $a$ \emph{has} to lie
within the circle $\beta$.  The diagrammatic information in the proof
regarding $a$ with respect to $\beta$, however, does not alone imply
it.  The metric fact that $da < dg$ must be added to the proof for the
position of $a$ inside $\beta$ to be forced.  The $\na{E}$ proof of
Proposition 2 thus contains a few lines not present in Euclid's proof.
 
Euclid does explicitly state one metric-to-diagram rule: the famous
parallel postulate.  The postulate allows Euclid to speak of an
intersection point between two lines---a diagrammatic piece of
data---given metric data about a configuration in which the lines are
embedded.  Accordingly, in Propositions I.44 and II.10 Euclid invokes
it to justify the introduction of certain intersection points.
Strangely, however, a similar justification is needed for intersection
points appearing in Euclid's proofs of Propositions I.42 and I.45, but
Euclid does not provide it.  He simply takes the intersection points
to exist without mentioning the parallel postulate.  The reasons for
this inconsistency are not immediately apparent.  The arguments which
are lacking in I.42 and I.45 are more complicated than those included
in I.44 and II.10.  Perhaps Euclid did not want to complicate his
exposition, or perhaps it was just an oversight. In any case, in
$\na{E}$, one must invoke the parallel postulate in the proofs of all
four propositions.

We close this section with a discussion of another interesting
difference between $\na{E}$ and Euclid. This time, it is an instance
where, by $\na{E}$'s lights, Euclid does too much. At issue are the
identity conditions of circles.  Euclid's definition reads as follows:
\begin{quote} 
  A \emph{circle} is a plane figure contained by one line such that
  all the straight lines falling upon it from one point among those
  lying within the figure equal one another; and the point is called
  the \emph{center} of the circle. \cite[pp.~153--154]{euclid}
\end{quote} 
In $\na{E}$ this definition translates into diagram-segment transfer
Rules 2, 3, and 4.  The function of the Rule 2 is to fix the
construction of a circle from a given length as unique.  In fixing it
as a rule in $\na{E}$, we take it to express Euclid's definition
directly.  Euclid, however, feels that it is at least conceivable that
two distinct circles with equal radii be constructed from the same
center, for in Proposition III.5 he proves that such a configuration
is impossible. From this result Rule 2 then follows immediately.
 
Thus, with Proposition III.5 Euclid requires a proof for something
which one can assume without proof in $\na{E}$.  There is nothing,
however, about the general structure of $\na{E}$ which forces this
difference; we could have replaced our Rule 2 with a rule that
licenses the key diagrammatic inference in Euclid's proof of III.5.
Such a rule, however, would be complicated, and rather than assume it
we have decided to treat circles as uniquely defined by a center and a
length. Instead, our Rule 2 conforms better to the modern conception
of a circle as the set of points which lie a fixed distance from a
given center.

\subsection{Euclid's postulates and common notions}
\label{postulates:section}

Since the \emph{Elements} is presented as an axiomatic development, it
it is worth considering Euclid's postulates and common notions, to see
how they line up with the fundamental rules of $\na{E}$. In the Heath
translation \cite[p.~154--155]{euclid}, the postulates are as follows:
\begin{enumerate}
\item To draw a straight line from any point to any point.
\item To produce a finite straight line continuously in a straight
  line.
\item To describe a circle with any centre and distance.
\item That all right angles are equal to one another.
\item {}[The parallel postulate; see Section~\ref{metric:section}.]
\end{enumerate}
Postulates 1 and 3 are the construction rules of $\na{E}$ for
lines and circles. Postulate 2 does not have a direct translation
in our system, given that we take all our lines to be ``indefinitely
extended''; but since Euclid will use this, say, to extend a segment
$ab$ to a point $c$, it essentially corresponds to construction 4
for points. Our remaining construction rules let us choose
``arbitrary points'' or label points of intersection. Euclid doesn't
say anything more about this; he just does it. As noted in
Section~\ref{transfer:section}, Euclid's Postulate 4 essentially
corresponds to our diagram-angle transfer axiom 3. Similarly,
Postulate 5 is our diagram-angle transfer axiom 5.

Euclid's common notions are as follows \cite[p.~155]{euclid}:
\begin{enumerate}
\item Things which are equal to the same thing are also equal to one
  another.
\item If equals be added to equals, the remainders are equal.
\item If equals be subtracted from equals, the remainders are equal. 
\item Things which coincide with one another are equal to one another.
\item The whole is greater than the part.
\end{enumerate}
These, for the most part, govern magnitudes; in our formulation, they
are therefore subsumed by the laws that govern the metric sorts,
together with the transfer axioms that relate the diagrammatic notions
of ``adding,'' ``subtracting,'' and ``being a part of'' to the
operations on magnitudes. For example, common notions 1 and 2 are
equality rules, and common notion 3 is the cancellation axiom, modulo
what it means to combine magnitudes in diagrammatic terms. Our first
diagram-segment transfer axiom explains what it means to add
adjacent segments; our second diagram-angle transfer axiom explains
what it means to add adjacent angles; our second diagram-area
transfer axiom explains what it means to combine the areas of
adjacent triangles. In each case, one can take the diagrammatic
configurations representing the component magnitudes to be the
``parts'' of the diagram configurations representing the sum. In that
case, the last common notion, 5, corresponds to the fact that
nontrivial segments, angles, and areas are positive, as given by the
corresponding transfer axioms.

Thus, Euclid's postulates correspond to some of our construction rules
and transfer inferences, and the common notions correspond to metric
inferences and other transfer inferences. The remainder of our
construction rules, and \emph{all} our diagram inferences, are then
subsumed under what Euclid takes to be implicit in the definitions and
the meanings of the undefined terms. It is, perhaps, regrettable that
there is not a cleaner mapping from our axioms to Euclid's. But, as
the discussion above indicates, even a simple principle like ``the
whole is greater than the part'' assumes an understanding of how
wholes and parts can be recognized in a diagram, and it is this
implicit understanding that we have tried to spell out with the rules
of $\na{E}$.

\subsection{Additional proofs}
\label{technical:section}

In this section, we provide three additional theorems of $\na{E}$,
which are needed for the completeness proof in the next section.  The
first is Euclid's Proposition I.12. Here, the phrase ``$M$ is
perpendicular to $L$'' masks implicit references to points $p, d, a$
such that $p$ is on $M$, $d$ is on both $M$ and $L$, $a$ is on $L$,
and angle $pda$ is a right angle.

\newtheorem*{Prop12}{Proposition I.12}

\begin{Prop12}
        \label{prop_I.12}
\ \\
        Assume point $p$ is not on line $L$.\\
        Construct a line $M$ through $p$ which is perpendicular to $L$.
\end{Prop12}

\begin{center}
\psfrag{p}{$p$}\psfrag{d}{$d$}\psfrag{a}{$a$}\psfrag{b}{$b$}\psfrag{q}{$q$}\psfrag{A}{$\alpha$}\psfrag{M}{$M$}\psfrag{L}{$L$}
\includegraphics[height=3cm]{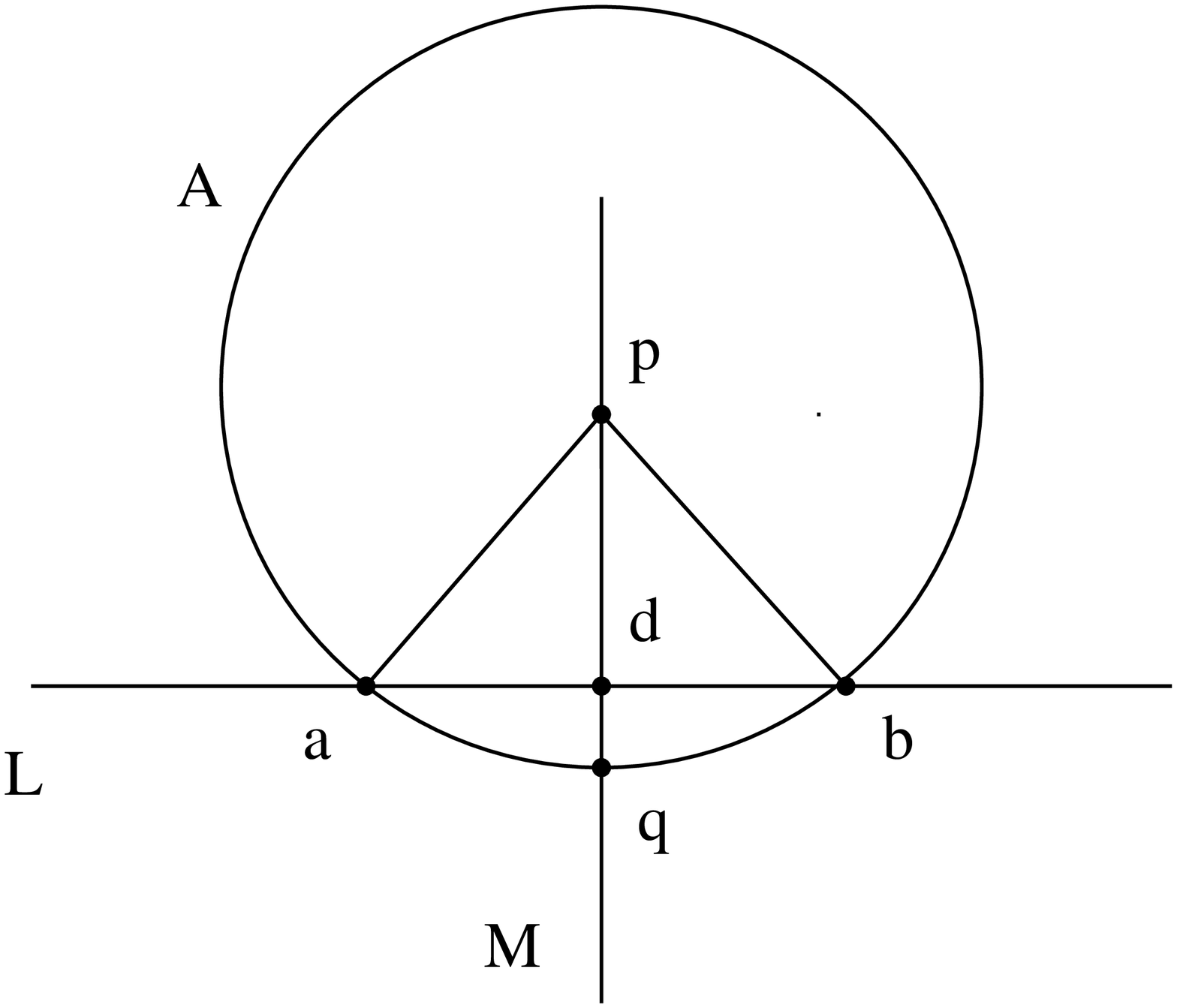}
\end{center}

\begin{proof}
  Let $q$ be a point on the opposite side of $L$ from $p$.\\
  Let $\alpha$ be the circle through $q$ with center $p$.\\
  Let $a$ and $b$ be the points of intersection of $L$ and
  $\alpha$.\\
  By Proposition I.10, let $d$ bisect segment $ab$.\\
  Let $M$ be the line through $p$ and $d$.\\
  By Proposition I.8 applied
  to triangles $pda$ and $pdb$, we
  have $\angle pda = \angle pdb$.\\
  Hence $\angle pda$ is a right angle.\\
  Q.E.F.
\end{proof}

The proof is almost identical to Euclid's. Notice that it is the
fourth diagram intersection rule that licenses the assertion that $L$
and $\alpha$ intersect.

The next two propositions are of a purely technical nature.  The first
shows how a construction in $\na{E}$ can depend on a case split (see
footnote~\ref{case:footnote}). Once again, we have taken some
liberties with the wording. Reference to the ``line through $p$ and
$s$,'' for example, masks a reference to a variable for a line on
which $p$ and $s$ both lie.

\newtheorem*{Tech1}{Technical Proposition 1}
\newtheorem*{Tech2}{Technical Proposition 2}

\begin{Tech1}
\label{same_to_points}
\ \\
Assume $p\ne q$ are on the same side of line $L$.\\
Construct points $r,s,t$ such that
\begin{enumerate}
\item $s,t$ are on $L$,
\item $r$ is the intersection of the line through $p$ and $s$ and the
  line through $q$ and $t$.
\end{enumerate}
\end{Tech1}

\begin{center}
\psfrag{p}{$p$}\psfrag{q}{$q$}\psfrag{r}{$r$}\psfrag{s}{$s = f$}
\psfrag{t}{$t = e$}\psfrag{L}{$L$}\psfrag{M}{$M$}\psfrag{N}{$N$}
\psfrag{O}{$O$}\psfrag{P}{$P$}
\includegraphics[height=3.5cm]{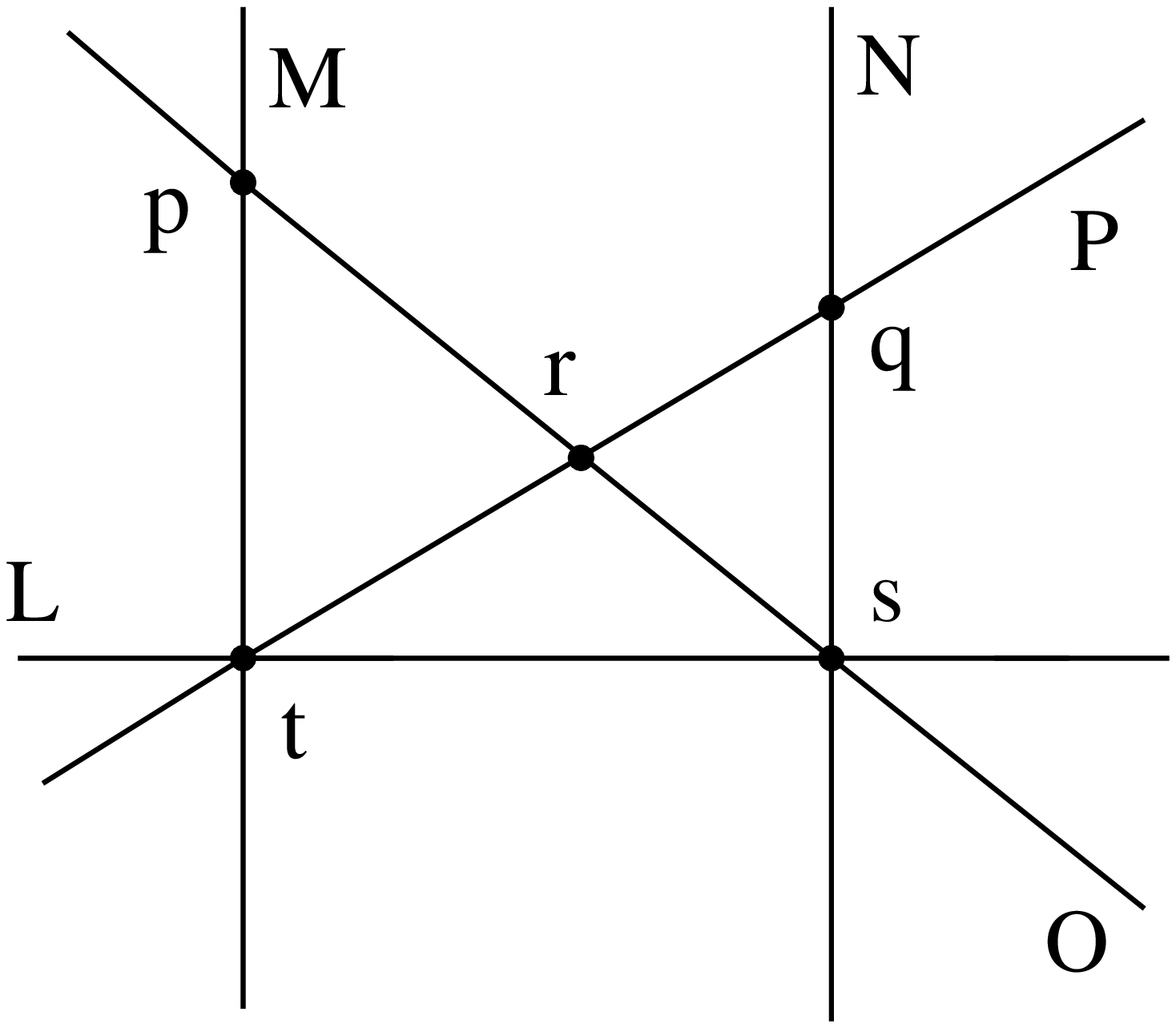}
\end{center}

\begin{proof}
  By Proposition I.12, let $M$ be a line through $p$ perpendicular
  to $L$, intersecting $L$ at $e$.\\
  By Proposition I.12, let $N$ be a line through $q$ perpendicular
  to $L$, intersecting $L$ at $f$.\\
  Suppose $e \ne f$.\\
  \hspace*{10pt} Hence $M$ and $N$ are parallel.\\
  \hspace*{10pt} Let $s = f$.\\
  \hspace*{10pt} Let $t = e$.\\
  \hspace*{10pt} Let $O$ be the line through $p$ and $s$.\\
  \hspace*{10pt} Let $P$ be the line through $q$ and $t$.\\ 
  \hspace*{10pt} Let $r$ be the intersection of $O$ and $P$.\\
  \hspace*{10pt} Then $r, s, t$ satisfy 1 and 2.\\
  Suppose $e = f$.\\
  \hspace*{10pt} Let $s$ be a point on $L$ distinct from $e$.\\
  \hspace*{10pt} Let $t$ be a point on $L$ extending the segment from
  $s$ to $e$.\\
  \hspace*{10pt} Let $O$ be the line through $p$ and $s$.\\
  \hspace*{10pt} Let $P$ be the line through $q$ and $t$.\\ 
  \hspace*{10pt} Let $r$ be the intersection of $O$ and $P$.\\
  \hspace*{10pt} Then $r, s, t$ satisfy 1 and 2.\\
  Q.E.F.
\end{proof}

In the first case, a diagram inference tells us that $p$ and $t$ are
on the same side of $M$ (since otherwise $N$ and $M$ would intersect).
A triple-incidence rule, applied to $L$, $M$, and $N$ then tells us
that $q$ and $t$ are on opposite sides of $O$, which licenses the fact
that $O$ and $P$ intersect. The second case actually requires a case
distinction on the position of $p$ and $q$ along the perpendicular, at
which point, the Pasch rules provide enough information to license the
fact that $O$ and $P$ intersect.

\begin{Tech2}
\label{points_to_same}
\ \\
Assume line $L$ and points $p,q,r,s,t$ satisfy
the conclusions of the previous proposition.\\
Then $p$ and $q$ are on the same side of $L$.
\end{Tech2}

In fact, this is a direct diagrammatic inference, using the Pasch
rules. 

\section{Completeness}
\label{completeness:section}

In this section, we sketch a proof that $\na{E}$ is complete for a modern
semantics appropriate to the \emph{Elements}. This semantics is
presented in Section~\ref{semantics:section}, and the completeness
proof is presented in
Sections~\ref{tarski:section}--\ref{last:section}. 

\subsection{The semantics of ruler-and-compass constructions}
\label{semantics:section}

Thanks to Descartes, Euclid's points, lines, and circles can be
interpreted, in modern terms, as points, lines, and circles of the
Euclidean plane, $\RR \times \RR$. It is straightforward to show that
all the constructions and inference rules of $\na{E}$ are valid for
this semantics. $\na{E}$ is not, however, complete for this semantics:
all of Euclid's constructions, and hence all constructions of
$\na{E}$, can be carried out with a ruler and compass, and Galois
theory tells us that no ruler-and-compass construction can trisect a
sixty degree angle \cite[p.~240]{hungerford:74}. In particular,
$\na{E}$ cannot prove that there exists an equilateral triangle and a
trisection of one of its angles. The negation of this statement is a
universal statement, and so can also be expressed in $\na{E}$. This
shows that there is an existential statement that can neither be
proved nor refuted in $\na{E}$, showing that $\na{E}$ is not
syntactically complete, either.

Fortunately, there is a better semantics for the \emph{Elements}. An
ordered field is said to be \emph{Euclidean} if every nonnegative
element has a square root. Taking square roots essentially allows one
to construct the intersection of a line and a circle, and
conversely. Say that a sequent of $\na{E}$ is \emph{valid for ruler
  and compass constructions} if its universal closure is true in every
plane $F \times F$, where $F$ is a Euclidean field, under the usual
cartesian interpretation of the primitives of $\na{E}$. Our goal in
this section is to outline a proof of the following:

\begin{theorem}
\label{completeness:theorem}
  A sequent $\Gamma \Rightarrow \ex{\vec x.}
  \Delta$ is valid for ruler-and-compass constructions if and only if
  it is provable in $\na{E}$.
\end{theorem}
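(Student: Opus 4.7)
The plan is to prove the two directions separately, with soundness being a routine verification and completeness being the substantial work. For soundness, I would go through the construction rules, diagrammatic axioms, metric axioms, transfer axioms, and superposition rules one by one, and check that each is valid when the primitives are given their standard Cartesian interpretation over an arbitrary Euclidean field $F$. Most rules are trivially sound over any ordered field; the rules that genuinely require the Euclidean-field hypothesis are the intersection rules for line/circle and circle/circle, which rest on the existence of square roots, and the two superposition rules, whose validity reduces to the existence of the appropriate rigid motion (a fact available in any Pythagorean ordered field). Because $\na{E}$'s notion of ``direct consequence'' was already shown to be sound for first-order consequence of the axioms, no further work is needed there.

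For completeness, the strategy is to reduce to the known completeness theorem for Tarski's elementary Euclidean geometry, which is complete for the class of Cartesian planes over Euclidean fields. The plan is threefold: (a) define a translation from $\na{E}$-sequents into first-order sentences in Tarski's language, representing each line by an ordered pair of distinct points, each circle by a center together with a point on it, and unfolding our sorted relations and functions into Tarski's $B$ and $\equiv$; (b) show that a sequent $\Gamma \Rightarrow \ex{\vec x.} \Delta$ is valid for ruler-and-compass constructions if and only if its translation is a semantic, hence syntactic, consequence of Tarski's axioms for Euclidean geometry; and (c) show that whenever the translation is provable in Tarski's system, the original sequent is provable in $\na{E}$. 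Steps (a) and (b) are essentially bookkeeping, modulo the careful handling of the mismatch between our strict $\mybetween$ and Tarski's non-strict betweenness, which requires inserting distinctness hypotheses throughout and is the reason Section~\ref{tarski:section} is devoted to setting up the translation.

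The hard part will be step (c): transferring a Tarski-proof back into the very restricted format of $\na{E}$, whose theorems have the shape $\Gamma \Rightarrow \ex{\vec x.} \Delta$ with $\Gamma,\Delta$ conjunctions of literals, whose construction rules insist on explicit ruler-and-compass steps, and whose direct diagrammatic inferences deliberately exclude non-constructive case analysis, whereas Tarski's proofs use unrestricted classical first-order logic and quantifier alternations over ``ideal'' points that need not be constructible. My plan is to simulate, inside $\na{E}$, each of Tarski's axioms as a derived rule, using the propositions of Books I through IV whose formalizations we have sketched in Section~\ref{examples:section} and Section~\ref{technical:section}: Tarski's segment-construction and five-segment axioms are matched by our Propositions I.2, I.3, and I.4; the Pasch-style axiom is matched by our Pasch inferences together with the triple-incidence and same-side rules; and the dimension axioms follow from basic diagrammatic reasoning. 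Case splits in Tarski's classical derivation are simulated by $\na{E}$'s case rule on atomic formulas, after establishing the appropriate trichotomies (betweenness trichotomy, same-side/on/opposite-side trichotomy), which are themselves derivable in $\na{E}$.

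The secondary obstacles are the following. First, the ruler-and-compass sufficiency needs genuine geometric work: to simulate a Tarski-proof that uses an arbitrary point on a line or a witness to an existential, we have to provide an actual construction in $\na{E}$, and the technical propositions in Section~\ref{technical:section} were designed with exactly this purpose. Second, the translation must respect the fact that $\na{E}$'s lines are ``indefinitely extended'' while points introduced during a construction may implicitly name a line by two of its points; the proof of (c) proceeds by induction on the Tarski-derivation and keeps track of a store of $\na{E}$-objects realizing the points, lines, and circles invoked so far. Finally, I need to check that restricting diagrammatic inferences to \emph{direct} (rather than full first-order) consequences costs no completeness, which follows because $\na{E}$ freely admits case splits on atomic formulas, and Proposition~\ref{direct:equiv:prop} tells us that the gap between direct and classical consequence is exactly closed by such case splits.
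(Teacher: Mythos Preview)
Your overall architecture matches the paper's: soundness by inspection, completeness by translating into Tarski's system, invoking Tarski's completeness, and then pulling the proof back into $\na{E}$.  You also correctly identify step~(c) as the crux.  But your plan for~(c) has a real gap.

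You propose to proceed ``by induction on the Tarski-derivation,'' simulating each Tarskian axiom as a derived rule of $\na{E}$ and handling classical case analysis via $\na{E}$'s atomic case splits.  The problem is that an arbitrary first-order derivation in Tarski's system has intermediate sequents containing formulas of unrestricted logical complexity: nested quantifier alternations, disjunctions, implications, cuts on compound formulas.  $\na{E}$, by design, can only express sequents of the form $\Gamma \Rightarrow \ex{\vec x.}\Delta$ with $\Gamma,\Delta$ sets of literals; there is simply no $\na{E}$-object to assign to a generic intermediate sequent, so the induction hypothesis has nothing to say.  Simulating the \emph{axioms} is not enough; you must also control the \emph{logical rules}, and atomic case splits in $\na{E}$ only recover classical reasoning over literals (this is exactly what Proposition~\ref{direct:equiv:prop} says), not over arbitrary first-order formulas.

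The paper closes this gap with a specific proof-theoretic device you omit: it first recasts Tarski's axioms as \emph{geometric rule schemes} (adding fresh predicates $\overline{B}$, $\ne$, $\not\equiv$ so that negated atoms become atomic), and then invokes Negri's cut-elimination theorem for geometric theories.  The payoff is that any provable regular sequent has a cut-free proof in which \emph{every} intermediate sequent is itself regular, hence of the shape $\na{E}$ can express.  Only then does the induction on the derivation go through, and Lemma~\ref{rho:translation:lemma} carries it out rule by rule.  Without this normalization step (or some equivalent, e.g.\ a Herbrand-style argument yielding a disjunction of literal instances), your step~(c) does not get off the ground.
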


Once again, the ``if'' direction, asserting that $\na{E}$ is sound for
ruler-and-compass constructions, is straightforward. We will therefore
focus on establishing completeness. A direct proof would involve
assuming that a given sequent is not provable in $\na{E}$, and then
constructing a Euclidean field in which that sequent is false. But
given $\na{E}$'s restricted logic, the details would be tricky, and
our job will be much easier if we build on previous work.
Tarski~\cite{tarski-whatis} gave a sound and complete axiomatization
not only of the full Euclidean plane, but also of the fragment that is
valid for ruler-and-compass constructions. It is therefore sufficient
to show that $\na{E}$ is complete with respect to Tarski's
axiomatization of the latter.

There are, however, obstacles to this approach. For one thing,
Tarski's axiomatization of geometry uses only one sort, namely points,
and two primitives, for betweenness and equidistance, as described
below. So interpreting statements of $\na{E}$ in Tarski's system and
vice-versa involves a change of language. A more serious obstacle is
that Tarski uses full first-order logic, in contrast to the very
meager fragment that is allowed in $\na{E}$. So knowing that a
statement is provable in Tarski's system is not \emph{a priori}
helpful, since there will generally be no line-by-line interpretation
of this proof in $\na{E}$.

Below, however, we will show that with a modicum of tinkering,
Tarski's axioms can be expressed in a restricted form, namely, as a
system of \emph{geometric} rules. We will then invoke a cut
elimination theorem, due to Sara Negri, that shows that if a sequent
of suitably restricted complexity is provable in the system, there is
a proof in which every intermediate sequent is also of restricted
complexity. This will allow us to translate proofs in Tarski's system
to proofs in $\na{E}$.

\newcommand{\Eu}{\na{E}}
\newcommand{\Tarski}{\na{T}}

More precisely, we will craft a slight variant, $\na{T}$, of Tarski's
system, which is sound and complete for ruler-and-compass
constructions, and enjoys some nice proof-theoretic properties. We
will define a translation $\pi$ from sequents of $\na{E}$ to sequents
of $\na{T}$, and a re-translation $\rho$ in the other direction.
Ultimately, we will show that the systems and translations involved
have the following properties:
\begin{enumerate}
\item If $\Gamma\fCenter \ex{\vec{x}.} \Delta$ is valid for ruler and
  compass constructions, then $\Tarski$ proves $\pi(\Gamma\fCenter
  \ex{\vec{x}.} \Delta)$.
\item If $T$ proves $\pi(\Gamma\fCenter \ex{\vec{x}.} \Delta)$, then
  $\Eu$ proves $\rho(\pi(\Gamma\fCenter \ex{\vec{x}.} \Delta))$.
\item If $\Eu$ proves $\rho(\pi(\Gamma\fCenter \ex{\vec{x}.}
  \Delta))$, then $\Eu$ proves $\Gamma\fCenter \ex{\vec{x}.} \Delta$.
\end{enumerate}
This yields the desired completeness result. Since many of the details
are straightforward, we will be somewhat sketchy; additional
information can be found in Dean's MS thesis \cite{dean:08}.

In fact, we will not interpret the area (``$\area$'') function of
$\na{E}$ or the functions and relations on the area sort; so we only
establish completeness for theorems that do not involve areas. Defining an
adequate notion of area in Tarski's system requires a fair amount of
work, although by now the mechanisms for doing so are well understood
(see, for example, Hilbert \cite[Chapter IV]{hilbert:99}). We are
confident that the methods described here extend straightforwardly to
cover areas as well, but spelling out the details would require more
effort.

\subsection{Tarski's system}
\label{tarski:section}

Tarski's axiomatization of the ruler-and-compass fragment of Euclidean
geometry employs the language, $\mathcal{L}$, whose only nonlogical
predicates are a ternary predicate, $B$, where $B(abc)$ is intended to
denote that $a$, $b$, and $c$ are collinear and $b$ is between $a$ and
$c$; and a four-place relation, $\equiv$, where $ab \equiv cd$ is
intended to denote that segment $ab$ is congruent to segment $cd$. (In
contrast to the ``$\mybetween$'' predicate of $\na{E}$, Tarski's $B$
denotes nonstrict betweenness.) The axioms consist of (the universal
closures of) the following (see, e.g. \cite{tarski-givant}):
\begin{enumerate}
\item Equidistance axiom (\Esymm): $ab \equiv ba$
\item Equidistance axiom (\Etrans): $(ab\equiv pq) \land
  (ab\equiv rs) \limplies (pq\equiv rs)$
\item Equidistance axiom (\Eid): $(ab\equiv cc) \limplies a=b$
\item Betweenness axiom (\Betw): $B(abd) \land B(bcd) \limplies B(abc)$
\item Segment Construction Axiom (\Seg): $\ex x (B(qax) \land
  (ax\equiv bc))$
\item Five-Segment Axiom (\Five):
\[
\begin{split} [\neg(a= b) \wedge B(abc) \wedge B(pqr) \wedge (ab\equiv
  pq) \wedge (bc\equiv qr) \wedge \\
(ad\equiv ps) \wedge (bd\equiv qs)]
  \rightarrow (cd\equiv rs)
\end{split}
\]
\item Pasch Axiom (\Pasch): $B(apc) \wedge B(qcb) \rightarrow
                \ex x (B(axq) \wedge B(bpx))$
\item Lower 2-Dimension Axiom (\LD): $\ex {a,b,c} [\neg B(abc) \wedge
  \neg B(bca) \wedge \neg B(cab)]$
\item Upper 2-Dimension Axiom (\UD): $\neg(a=b) \wedge
  \bigwedge_{i=1}^3 x_ia\equiv x_ib \rightarrow (B(x_1x_2x_3)\vee
  B(x_2x_3x_1)\vee B(x_3x_1x_2))$
\item Parallel Postulate (\Euc): $B(adt) \wedge B(bdc) \wedge
  \neg(a=d) \rightarrow \ex {x,y} (B(abx) \wedge B(acy) \wedge B(ytx))$
\item Intersection Axiom (\Int): $(ax\equiv ax') \wedge (az\equiv az')
  \wedge B(axz) \wedge B(xyz) \rightarrow \ex {y'} ((ay\equiv ay')
  \wedge B(x'y'z'))$
\end{enumerate}
Intuitively, the last axiom says that any line through a point lying
inside a circle intersects the circle. Tarski showed that when one
replaces this axiom with the Continuity Axiom Scheme,
\[
\ex a \fa {x,y} (\varphi(x)\wedge\psi(y) \rightarrow B(axy))
\rightarrow \ex b \fa {x,y} (\varphi(x)\wedge\psi(y) \rightarrow
B(xby))
\]
the result is complete for the semantics of the full Euclidean
plane. But he also showed that axioms 1--11 are complete for
ruler-and-compass constructions, and it is this result that is
important for our purposes.\footnote{\label{ziegler_footnote}Note that
  the system for ruler-and-compass constructions is finitely
  axiomatized, in contrast to the stronger system with the Continuity
  Axiom Scheme.  Ziegler \cite{ziegler} proved that any finitely
  axiomatizable theory of fields that has among its models an
  algebraically closed field, a real closed field or a field of
  $p$-adic numbers, is an undecidable theory.  It is clear from the
  present result that the formal system for ruler-and-compass
  constructions has a real closed field among its models (since a real
  closed field is, \emph{a fortiori}, Euclidean). Thus the system is
  undecidable.}

\begin{theorem}[Tarski]
\label{tarski:completeness:theorem}
If $\ph$ is valid for ruler-and-compass constructions, then $\ph$ is a
first-order consequence of the axioms above.
\end{theorem}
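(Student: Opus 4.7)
My plan is to derive Tarski's completeness theorem from the standard representation theorem for models of axioms 1--11: every such model is isomorphic to the Cartesian plane $F \times F$ over a Euclidean ordered field $F$. Granting this, the theorem follows easily. Suppose $\varphi$ is valid for ruler-and-compass constructions but fails to be a first-order consequence of axioms 1--11. Then by the first-order completeness theorem, there is a model $\mathcal{M}$ of the axioms in which $\varphi$ fails; by the representation theorem, $\mathcal{M} \cong F \times F$ for some Euclidean field $F$, contradicting the assumed validity of $\varphi$.

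To establish the representation theorem, I would fix a model $\mathcal{M}$ and, using \LD, pick three non-collinear points $O, E_1, E_2$. Let $L_1, L_2$ be the lines through $O, E_1$ and through $O, E_2$ respectively. On $L_1$, I would define addition by segment transport (licensed by \Seg) and multiplication by the similar-triangle construction, projecting through parallel lines whose existence is supplied by \Euc. Verification of the field axioms is mechanical but lengthy: commutativity and associativity of addition follow from the betweenness axioms together with \Seg, while the distributive and multiplicative laws require repeated use of \Five\ and \Pasch\ to transfer congruences across the auxiliary parallels.

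Next, I would define the order on $F$ via betweenness with respect to $O$, and check that it respects the field operations. The crucial point is Euclideanness: every positive $a \in F$ must have a square root. This is exactly where \Int\ enters. Given $a > 0$, I would construct the standard mean-proportional configuration --- a segment of length $1 + a$ with a semicircle erected over it --- and use \Int\ to extract the intersection of the perpendicular to the base with the semicircle, whose height realizes $\sqrt{a}$. Finally, using \UD\ to pin down two-dimensionality, I would coordinatize arbitrary points of $\mathcal{M}$ by perpendicular projection onto $L_1$ and $L_2$, and verify that $B$ and $\equiv$ translate to their standard analytic interpretations in $F \times F$.

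The hardest step will be the Euclidean property: \Int\ is stated in a rather cryptic point-configuration form, and it takes some care to rearrange the desired mean-proportional configuration so that its hypotheses fit exactly. The rest follows the classical pattern of coordinatization developed by Hilbert for his \emph{Grundlagen}, adapted to Tarski's more minimalist primitives; the main bookkeeping task is tracking which field-theoretic identities correspond to which combinations of \Pasch, \Five, and \Euc\ in the geometric model.
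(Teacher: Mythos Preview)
The paper does not actually prove this theorem: it is stated with attribution to Tarski and cited from \cite{tarski-whatis}, and then simply used as a black box in the completeness argument for $\na{E}$. So there is no ``paper's own proof'' to compare against.

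That said, your outline is essentially the classical route by which Tarski and his school (Szmielew, Schwabh\"auser) established such results: one shows a representation theorem --- every model of axioms 1--11 is isomorphic to $F \times F$ over a Euclidean ordered field --- and then appeals to G\"odel completeness. Your identification of the roles of the individual axioms is accurate: \Seg\ and \Five\ drive segment arithmetic, \Euc\ supplies the parallels needed for multiplication, \Int\ yields square roots via the mean-proportional construction, and \UD\ pins the dimension at two. One caution: the phrase ``mechanical but lengthy'' considerably undersells the work involved. The full coordinatization from Tarski's primitives, carried out rigorously, occupies a large portion of Schwabh\"auser--Szmielew--Tarski's \emph{Metamathematische Methoden in der Geometrie}; in particular, deriving the field laws (especially associativity and commutativity of multiplication) from \Five\ and \Pasch\ requires a substantial chain of intermediate lemmas about congruence and order that are far from routine. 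Your sketch is correct in spirit, but a complete execution is a monograph-scale undertaking rather than a few paragraphs.
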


We will now fashion a variant of this system with better
proof-theoretic properties.  A theory is called \emph{geometric} if
all of its axioms are sentences of the following form:
\[ 
(\star) \quad\quad \fa{\vec{x}}\left[\bigwedge_{i=1}^m B_i(\vec{x})
  \rightarrow \bigvee_{j=1}^n \left(\exists \vec{y}_j
    \bigwedge_{k=1}^{\ell_j}
    A_{j,k}(\vec{x},\vec{y}_j)\right)\right],
\]
where the $A$'s and $B$'s are atomic formulas (including $\top$ and
$\bot$), and each of $\vec{x}$, $\vec{y}$ or the antecedent of the
conditional could be empty.  Formulas of the form $(\star)$ are called
\emph{geometric}. Those geometric formulas with only a single disjunct
in the consequent (i.e. geometric formulas in which $\vee$ does not
appear) are called \emph{regular}. Note that, on our modeling,
Euclid's propositions are almost of this latter form, the difference
being that arbitrary literals (negated atomic formulas as well as
atomic formulas) are allowed in the antecedent and consequent.

Sara Negri \cite{negri-main}, building on earlier joint work with Jan
von Plato \cite{negri-plato-main}, has established a cut-elimination
theorem for geometric theories that we can put to use in our
completeness proof.  Suppose we have a geometric theory formulated in
a standard two-sided sequent calculus (see, for example
\cite{buss:98e,troelstra:schwichtenberg:00}). Then the theory can be
recast equivalently by replacing each of its geometric axioms like the
one above with a corresponding inference rule, called a
\emph{geometric rule scheme} (GRS):
\begin{center}
\begin{prooftree}
  \AXN{\vec{A}_{1,\cdot}(\vec{x},\vec{y}_1),\Pi \fCenter \Theta}
  \AXM{\cdots}
  \AXN{\vec{A}_{n,\cdot}(\vec{x},\vec{y}_n),\Pi \fCenter
    \Theta}
  \TIN{\vec{B}(\vec{x}),\Pi \fCenter \Theta}
\end{prooftree}
\end{center}
Here we assume that the variables among the $\vec{y}_j$'s do not
appear free in $\vec{B}$, $\Pi$ or $\Theta$.\footnote{If one
  represents sequents using sequences or multisets of formulas, as
  Negri does, the rules must be presented with the $\vec{B}(\vec{x})$
  repeated in the premises in order for Negri to prove the
  admissibility of the structural rules of contraction and weakening,
  along with cut-elimination. Taking $\Pi$ and $\Theta$ to be sets is
  notationally simpler and suffices for our purposes.}  Negri's
principal result is the following theorem, whose corollary we will
apply later.
\begin{theorem}
\label{negri:theorem}
Any sequent provable in a sequent calculus with geometric rule schemes
has a cut-free proof.
\end{theorem}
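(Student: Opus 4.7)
The plan is to adapt the standard Gentzen-style cut-elimination argument to accommodate the additional geometric rule schemes, taking advantage of the fact that these rules only traffic in atomic formulas. I would begin by setting up the underlying sequent calculus in a contraction-free, G3-style format (following Troelstra and Schwichtenberg), in which the usual structural rules of weakening and contraction are height-preserving admissible. The first substantive lemma to establish is that these admissibility results continue to hold once the GRSs are added; this is essentially routine because a GRS application only alters the antecedent by replacing one set of atomic formulas with finitely many sets of atomic formulas (plus fresh eigenvariables), so weakening passes through trivially and contraction can be pushed to the premises.

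With structural admissibility in hand, I would prove cut-elimination by the usual double induction on the pair $(\lvert A \rvert, h_1 + h_2)$, where $\lvert A \rvert$ is the complexity of the cut formula $A$ and $h_1, h_2$ are the heights of the cut's two premises, ordered lexicographically. The logical and structural cases are standard. The new cases to verify are those in which one or both of the premises of the cut end with a GRS. When the cut formula occurs only in the side-contexts $\Pi$ or $\Theta$ of a GRS, the cut permutes upward past the GRS: one performs the cut separately against each premise of the GRS and then re-applies the GRS, with the eigenvariable condition on $\vec{y}_j$ maintained by a standard renaming argument since those variables are fresh by hypothesis and the cut formula does not mention them.

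The only delicate case is when the cut formula is one of the atomic formulas $B_i(\vec{x})$ principal in a GRS application on the right-hand side of the cut. Since the GRSs introduce atoms only on the left, such an $A = B_i(\vec{x})$ on the right of a sequent must ultimately trace back either to an initial sequent of the form $A, \Pi \fCenter A, \Theta$ or to the principal (left-introduced) formula of another GRS whose consequent atoms appear on the right via weakening. In the first sub-case the cut collapses immediately to a smaller derivation; in the second, one permutes the cut upward on the right-hand derivation until it meets such an initial sequent, after which the reduction is immediate. Throughout, the key structural property being exploited is that GRSs have only atomic principal and active formulas, so the induction on $\lvert A \rvert$ never needs to decrease across a GRS application — the height induction suffices.

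I expect the main obstacle to be the bookkeeping around eigenvariables in the $\exists$-generated atoms $\vec{y}_j$ when cuts are permuted past a GRS: one must verify that the fresh-variable side condition can always be restored by $\alpha$-renaming, and that this renaming does not break the induction measure. A secondary subtlety is that, because disjunctions in the original geometric axioms become branching in the GRSs, cuts on formulas in $\Theta$ duplicate across branches, so one must confirm that the height bound on the resulting derivation remains finite; this is handled by noting that each branch independently satisfies the inductive hypothesis and then reassembling with a single GRS application at the bottom.
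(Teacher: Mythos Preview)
The paper does not prove this theorem; it is stated as a result of Negri \cite{negri-main} (building on Negri and von Plato \cite{negri-plato-main}) and is simply invoked without proof. So there is no in-paper argument to compare against. Your sketch is, however, a faithful outline of how Negri's proof actually proceeds: a G3-style base calculus with height-preserving admissibility of weakening and contraction, followed by a double induction on cut-formula complexity and the sum of the heights of the two premises, with the GRSs handled by upward permutation of cuts exploiting the atomicity of the principal and active formulas.

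One point in your ``delicate case'' deserves sharpening. When the cut formula $A$ is one of the atoms $B_i(\vec{x})$ principal in a GRS at the root of the right premise, the reduction is not really a matter of tracing $A$ back through the \emph{left} derivation to an initial sequent. Rather, the permutation works directly: in Negri's formulation the $\vec{B}(\vec{x})$ are repeated in the premises of the GRS (the paper's footnote notes this; with sets as contexts the repetition is implicit), so one cuts the left premise $\Gamma \Rightarrow \Delta, B_i$ against each GRS premise, obtaining new premises in which all of $B_1,\ldots,B_m$ still occur on the left, and then reapplies the same GRS. Each of those cuts has strictly smaller height on the right, so the inner induction applies. Your alternative description---chasing $B_i$ on the right of the left premise back to an initial sequent---is the reduction for the \emph{other} configuration, where the atomic cut formula is principal on the right (which, for atoms in a G3 calculus, can only happen at an axiom). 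Both sub-cases are needed, but they should not be conflated. With that clarification your plan matches Negri's argument.
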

Since the cut rule is the only rule that removes formulas, this shows
that if a sequent $\Pi \Rightarrow \Theta$ is provable in such a
system, there is a proof that mentions only subformulas of formulas in
$\Pi$ and $\Theta$, and possibly some other \emph{atomic} formulas.

Say a sequent $\Pi \Rightarrow \Theta$ is \emph{geometric} if $\Pi$ is
a set of atomic formulas and $\Theta$ is a finite set of existentially
quantified conjunctions of atomic formulas. In other words, a
geometric sequent is a representation of a geometric formula where the
implication is replaced by the sequent arrow and the outer universal
quantifiers are left implicit. Say a geometric sequent is
\emph{regular} if $\Theta$ consists of at most one formula.
Theorem~\ref{negri:theorem} implies that if we are working in a
sequent calculus with geometric rule schemes, then any provable
geometric sequent has a proof in which every sequent is geometric;
and, similarly, any provable regular sequent has a proof in which
every sequent is regular.

Tarski's axiomatization for the ruler-and-compass constructions is
\emph{nearly} geometric.  The only stumbling block is that in
$(\star)$ the conjunctions are required to be conjunctions of atomic
formulas, not literals.  Thus, for instance, the lower
2-dimensional axiom
\[ 
\ex {a,b,c} (\neg B(abc) \wedge \neg B(bca) \wedge \neg B(cab)) 
\]
is not geometric.  We remedy this situation by introducing explicit
predicates for the negations of $=$ and $B$ and $\equiv$; that is, we
expand our language to one called $\mathcal{L}(\Tarski)$ by adding
predicates $\ne$ and $\overline{B}$ and $\not\equiv$; and we add the
(geometric) axioms
\begin{itemize}
\item $\fa {x,y} ((x=y)\vee(x\ne y))$
\item $\fa {x,y} ((x=y)\wedge(x\ne y) \rightarrow {\perp})$
\end{itemize}
as well as analogous ones for $B,\overline{B}$ and
$\equiv,\not\equiv$. We will call these ``negativity axioms'' below.
Also, we replace any negated instances of $=$ or $B$ (there are no
such negated instances of $\equiv$) from Tarski's original
axiomatization with the new corresponding predicate, thus obtaining a
geometrically axiomatized theory.  

Notice that there is an obvious translation from the language
$\mathcal L(T)$ of $\na{T}$ to the language of Tarski's
system, which maps, e.g., occurrences of $\overline{B}(xyz)$ to $\neg
B(xyz)$, and so on. This translation preserves provability, since the
negativity axioms imply that the new predicates behave like
negations. We now go further and put the nonlogical axioms of
$\Tarski$ into the form of geometric rule schemes.  First of all, the
negativity axioms look like this:
\begin{center}
\fontsize{10}{10}\selectfont
\begin{prooftree}
  \AXN{(x=y),\Pi \fCenter \Theta}
  \AXN{(x\ne y),\Pi \fCenter \Theta}
  \RL{\NNeg}
  \BIN{\Pi \fCenter \Theta}
\end{prooftree}

\begin{prooftree}
  \AXN{{\perp},\Pi \fCenter \Theta}
  \RL{\NNeg}
  \UIN{(x=y),(x\ne y),\Pi \fCenter \Theta}
\end{prooftree}
\end{center}
and similarly for the other predicates. The remaining rules are as follows
(and note that variables appearing in parentheses next to the rule names
are those which are not allowed to appear free in the conclusion):
\begin{center}
\begin{prooftree}
  \AXN{ab\equiv ba,\Pi \fCenter \Theta}
  \RL{\Esymm}
  \UIN{\Pi \fCenter \Theta}
\end{prooftree}

\begin{prooftree}
  \AXN{(pq\equiv rs),\Pi \fCenter
    \Theta}
  \RL{\Etrans}
  \UIN{(ab\equiv pq),(ab\equiv rs),\Pi \fCenter \Theta}
\end{prooftree}

\begin{prooftree}
  \AXN{(a=b),\Pi \fCenter \Theta}
  \RL{\Eid}
  \UIN{(ab\equiv cc),\Pi \fCenter \Theta}
\end{prooftree}

\begin{prooftree}
  \AXN{B(abc),\Pi \fCenter \Theta}
  \RL{\Betw}
  \UIN{B(abd),B(bcd),\Pi \fCenter \Theta}
\end{prooftree}

\begin{prooftree}
  \AXN{B(qax),(ax\equiv bc),\Pi \fCenter \Theta}
  \RL{\Seg(x)}
  \UIN{\Pi \fCenter \Theta}
\end{prooftree}

\begin{prooftree}
  \AXN{(cd\equiv rs),\Pi \fCenter \Theta}
  \RL{\Five}
  \UIN{a\ne b,B(abc),B(pqr),(ab\equiv pq),(bc\equiv qr),(ad\equiv
    ps),(bd\equiv qs),\Pi \fCenter \Theta}
\end{prooftree}

\begin{prooftree}
  \AXN{B(axq),B(bpx),\Pi \fCenter \Theta}
  \RL{\Pasch(x)}
  \UIN{B(apc),B(qcb),\Pi \fCenter \Theta}
\end{prooftree}

\begin{prooftree}
  \AXN{\overline{B}(abc),\overline{B}(bca),\overline{B}(cab),\Pi \fCenter \Theta}
  \RL{\LD(a,b,c)}
  \UIN{\Pi \fCenter \Theta}
\end{prooftree}

\begin{prooftree}
\AXN{B(x_1x_2x_3),\Pi \fCenter \Theta}
\AXN{B(x_2x_3x_1),\Pi \fCenter \Theta}
\AXN{B(x_3x_1x_2),\Pi \fCenter \Theta}
\RL{\UD}
\TIN{a\ne b,(x_1a\equiv x_1b),(x_2a\equiv x_2b),(x_3a\equiv x_3b),\Pi \fCenter \Theta}
\end{prooftree}

\begin{prooftree}
\AXN{B(abx),B(acy),B(ytx),\Pi \fCenter \Theta}
\RL{\Euc(x,y)}
\UIN{B(adt),B(bdc),a\ne d,\Pi \fCenter \Theta}
\end{prooftree}

\begin{prooftree}
\AXN{(ay\equiv ay'),B(x'y'z'),\Pi \fCenter \Theta}
\RL{\Int(y')}
\UIN{(ax\equiv ax'),(az\equiv az'),B(axz),B(xyz),\Pi \fCenter
  \Theta}
\end{prooftree}

\end{center}

Since the resulting system is just a reworking of Tarski's
axiomatization, combining Theorem~\ref{tarski:completeness:theorem}
with Negri's Theorem~\ref{negri:theorem} yields the following:

\begin{lemma}
\label{tarski:variant:complete}
Let $\Pi \Rightarrow \Theta$ be a geometric sequent in the language
of $\na{T}$ that is valid for ruler-and-compass constructions. Then
$\Pi \Rightarrow \Theta$ has a cut-free proof in $\na{T}$.
\end{lemma}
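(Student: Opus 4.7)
The plan is to chain together the three ingredients assembled in the preceding discussion: Tarski's completeness theorem for ruler-and-compass constructions, the equivalence of $\na{T}$ with Tarski's original axiomatization, and Negri's cut-elimination theorem for geometric rule schemes. Given a geometric sequent $\Pi \Rightarrow \Theta$ in the language of $\na{T}$ that is valid for ruler-and-compass constructions, first translate it into the language of Tarski's original system by replacing each occurrence of $\overline{B}$ with $\lnot B$ and each $\not\equiv$ with $\lnot \equiv$ and similarly for $\neq$, obtaining a formula $\varphi$ whose universal closure is still valid for ruler-and-compass constructions. By Theorem~\ref{tarski:completeness:theorem}, $\varphi$ is a first-order consequence of Tarski's axioms.

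Next, I would lift this derivation back into $\na{T}$. The negativity axioms were designed precisely so that the added predicates $\neq, \overline{B}, \not\equiv$ behave logically like the negations of their positive counterparts. Thus one obtains a (not necessarily cut-free) derivation of $\Pi \Rightarrow \Theta$ in $\na{T}$ from the geometric rule schemes listed above. Here one must verify that each of Tarski's original axioms, when recast using the new predicates, is indeed equivalent in $\na{T}$ to the corresponding geometric rule scheme; this is routine since each rule scheme is obtained by the standard left-introduction transformation of an axiom in the form $(\star)$ with atomic (rather than literal) hypotheses and conclusions, which is now legitimate because the negative predicates are atomic.

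At this point I would invoke Negri's Theorem~\ref{negri:theorem} to convert the derivation into a cut-free proof in $\na{T}$. This yields the conclusion as stated. The one subtle point to monitor is that the translation step must be proof-theoretically well-behaved: a proof of $\varphi$ from Tarski's axioms in ordinary first-order logic involves unrestricted use of classical logical rules, so strictly speaking one should first re-prove the translated sequent inside $\na{T}$'s sequent calculus (with cut) before appealing to cut-elimination. This is unproblematic, since a sequent calculus with cut is complete for classical first-order logic and the negativity axioms supply the law of excluded middle for the atomic formulas that matter.

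The main obstacle, as I see it, is bookkeeping rather than mathematical depth: ensuring that the back-and-forth translation between $\mathcal{L}$ and $\mathcal{L}(\na{T})$ commutes with provability, so that a proof of the translated sequent in $\na{T}$ really does witness a proof of the original $\Pi \Rightarrow \Theta$. Once this is in place, combining Tarski's completeness result with Negri's cut-elimination theorem delivers the lemma; the further preservation of regularity under cut-free proofs, which will be needed in the subsequent translation $\rho$ back to $\na{E}$, then follows by inspecting the subformula property guaranteed by cut-elimination.
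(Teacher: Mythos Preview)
Your proposal is correct and follows essentially the same approach as the paper, which simply remarks that since $\na{T}$ is a reworking of Tarski's axiomatization (with the negativity axioms making the new predicates behave like negations), combining Tarski's completeness theorem with Negri's cut-elimination theorem yields the lemma. You have spelled out in more detail the translation back and forth between $\mathcal{L}$ and $\mathcal{L}(\na{T})$ that the paper leaves implicit, but the structure of the argument is identical.
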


\subsection{Translating $\na{E}$ to $\na{T}$}
\label{translating:section}
Our goal now is to provide a translation $\pi$ that maps any sequent
$\Gamma \fCenter \ex{\vec x.} \Delta$ of $\na{E}$ to a geometric (in
fact, regular) sequent $\Pi \fCenter \Theta$ of $\na{T}$, with
the following properties:
\begin{itemize}
\item The translation preserves ruler-and-compass semantics, so that if
  $\Gamma \fCenter \ex{\vec x.} \Delta$ is valid for ruler-and-compass
  constructions, so is $\Pi \fCenter \Theta$.
\item Conversely, the existence of a cut-free proof of $\Pi
  \fCenter \Theta$ in $\na{T}$ implies the existence of a
  proof of $\Gamma \fCenter \ex{\vec x.} \Delta$ in $\na{E}$. 
\end{itemize}
In this section we will define the translation and show that it
satisfies the first property. The second property is then established
in Section~\ref{last:section} below.

In carrying out the translation, we will represent each line $L$ of
$\na{E}$ by distinct points $\mathfrak{c}_1^L,\mathfrak{c}_2^L$ that
are assumed to lie on $L$. Similarly, we will represent each circle
$\gamma$ of $\na{E}$ by its center, $\mathfrak{c}_1^\gamma$, and a point,
$\mathfrak{c}_2^\gamma$, that is assumed to lie on $\gamma$. More
precisely, given any sequent $\Gamma \Rightarrow \ex{\vec x.} \Delta$
of $\na{E}$, we will choose fresh variables
$\mathfrak{c}_1^L,\mathfrak{c}_2^L$ for each line variable $L$
occurring in the sequent, and fresh variables
$\mathfrak{c}_1^\gamma,\mathfrak{c}_2^\gamma$ for each circle variable
$\gamma$. Let $\hat \Delta$ consist of the assumptions
\[
\{ \mathfrak{c}_1^L \neq \mathfrak{c}_2^L, \on(\mathfrak{c}_1^L,L), 
\on(\mathfrak{c}_2^L,L) \}
\]
for each line variable $L$ among $\vec
x$, and the assumptions 
\[
\{ \mycenter(\mathfrak{c}_1^\gamma,\gamma), \on(\mathfrak{c}_2^\gamma,
\gamma) \}
\]
for each circle variable $\gamma$ among $\vec x$. (Note that, in
$\na{E}$, $\mathfrak{c}_1^\gamma \neq \mathfrak{c}_2^\gamma$ is a
consequence of the latter set of assertions.) Let $\hat \Gamma$
consist of the assumptions corresponding to the remaining line and
circle variables in the sequent. Then clearly $\Gamma \fCenter \ex
{\vec x.} \Delta$ is provable in $\na{E}$ if and only if $\Gamma, \hat
\Gamma \fCenter \ex {\vec x, \vec{\mathfrak{c}}.} \Delta, \hat \Delta$
is; and one is valid if and only if the other is valid as well.  When
we translate $\Gamma \fCenter \ex {\vec x.} \Delta$ to the language of
$\na{T}$, we will use these new variables, and the translations will
make sense as long as we assume $\mathfrak{c}_1^L \neq
\mathfrak{c}_2^L$ and $\mathfrak{c}_1^\gamma \neq
\mathfrak{c}_2^\gamma$ for the relevant constants. When we translate
back, we will add the assumptions in $\hat \Gamma, \hat \Delta$, which
will make it possible for $\na{E}$ to show that the result is
equivalent to the original sequent.

To define $\pi$, first, for each $\Eu$-literal $A$ we will define a
corresponding $\mathcal{L}(\Tarski)$-formula $\overline{\pi}(A)$ of
the following form:
\[ \exists \vec{z} \left(\bigwedge_k M_k(\vec{z})\right) \] where the
$M_k$'s are atomic. (Formulas of this form are sometimes referred to
as \emph{positive primitive} formulas.) We will occasionally abuse
notation below and write $\overline{\pi}(A)$ for the conjunction
$\bigwedge_k M_k(\vec{z})$ without the existential quantifiers out
front.  Furthermore, if we have a set of literals $A_1,\dots,A_m$ and
\[ \overline{\pi}(A_i) = \exists \vec{z}_i \left(\bigwedge_{k=1}^{n_i}
  M_{i,k}(\vec{z}_i)\right)
\]
for each $i$, we will sometimes write $\overline{\pi}(A_1,\dots,A_m)$
to refer to
\[ \exists \vec{z}_1,\dots,\vec{z}_m \bigwedge_{i=1}^m
\bigwedge_{k=1}^{n_i} M_{i,k}(\vec{z}_i). \] We do so for the sake of
perspicuity and simple readability.  When making such abuses, we will
call attention to the fact that we are doing so, and no confusion
should arise.

\showdiagram{
\begin{figure}

\begin{center}
\psfrag{a}{$a$}\psfrag{b}{$b$}\psfrag{N}{$N$}\psfrag{p}{$p$}\psfrag{e}{$\mathfrak{c}_1^N$}\psfrag{d}{$\mathfrak{c}_2^N$}

\includegraphics[height=2.5cm]{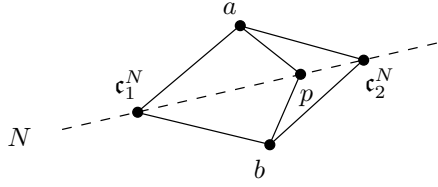}
\caption{The translation of $\on(p,N)$}
\label{rho:figure}
\end{center}

\end{figure}
}

In each case, our translation provides a natural way of expressing the
corresponding literal of $\na{E}$ as a formula of the desired form,
though some thought (and a diagram) is often needed to make sense of
it. For example, the translation of $\on(p,N)$ is illustrated by
Figure~\ref{rho:figure}. For the diagrammatic assertions, the clauses
of the translation are as follows.
\begin{itemize}
\item $\OOn(p,N) \,\,\mapsto\,\, \exists a,b
  (\underbrace{a\ne b \,\wedge\, \mathfrak{c}_1^Na\equiv\mathfrak{c}_1^Nb \,\wedge\,
    \mathfrak{c}_2^Na\equiv\mathfrak{c}_2^Nb \,\wedge\, pa\equiv
    pb}_{=: \,\, \zeta(\mathfrak{c}_1^N, \mathfrak{c}_2^N,p,a,b)})$.

\item $\neg\OOn(p,N) \,\,\mapsto\,\,
  \underbrace{\overline{B}(\mathfrak{c}_1^N \mathfrak{c}_2^N p) \wedge
    \overline{B}(\mathfrak{c}_1^N p \mathfrak{c}_2^N) \wedge
    \overline{B}(p \mathfrak{c}_1^N \mathfrak{c}_2^N)}_{=: \,\,
    \chi(\mathfrak{c}_1^N, \mathfrak{c}_2^N,p)}$.

\item $\SSide(p,q,N) \,\,\mapsto\,\,$ {\fontsize{10}{10}\selectfont
    \[ \exists r,s,t,a,b
    (\zeta(\mathfrak{c}_1^N,\mathfrak{c}_2^N,s,a,b) \wedge
    \zeta(\mathfrak{c}_1^N,\mathfrak{c}_2^N,t,a,b) \wedge
    \chi(\mathfrak{c}_1^N,\mathfrak{c}_2^N,r) \wedge B(prs) \wedge
    B(qrt)). \] }

\item $\neg\SSide(p,q,N) \,\,\mapsto\,\, \exists r,a,b
  (\zeta(\mathfrak{c}_1^N,\mathfrak{c}_2^N,r,a,b) \wedge B(prq))$.

\item $\Between(p,q,r) \,\,\mapsto\,\, B(pqr) \wedge p\ne q \wedge
  q\ne r \wedge p\ne r$.

\item $\neg\Between(p,q,r) \,\,\mapsto\,\,$
  {\fontsize{10}{10}\selectfont
    \[ \exists a,b,f,g,h,x,y,z \left[
      \begin{array}{l}
        \chi(a,b,q) \wedge a\ne p \wedge a\ne q \wedge a\ne r \wedge b\ne p \wedge b\ne q \wedge b\ne r \wedge \\
        B(apx) \wedge B(aqy) \wedge B(arz) \wedge p\ne x \wedge q\ne y \wedge r\ne z \wedge \\
        B(bpf) \wedge B(bqg) \wedge B(brh) \wedge p\ne f \wedge q\ne g \wedge r\ne h \wedge \\
        \overline{B}(xyz) \wedge \overline{B}(fgh)
      \end{array}
    \right]
    \]}

\item $\OnCirc(p,\gamma) \,\,\mapsto\,\, \mathfrak{c}_1^{\gamma} p
  \equiv \mathfrak{c}_1^{\gamma} \mathfrak{c}_2^{\gamma}$.

\item $\neg\OnCirc(p,\gamma) \,\,\mapsto\,\, \mathfrak{c}_1^{\gamma} p
  \not\equiv \mathfrak{c}_1^{\gamma} \mathfrak{c}_2^{\gamma}$.

\item $\Inside(p,\gamma) \,\,\mapsto\,\, \ex x
  (B(\mathfrak{c}_1^{\gamma} p x) \,\wedge\, p\ne x \,\wedge\,
  (\mathfrak{c}_1^{\gamma} x \equiv \mathfrak{c}_1^{\gamma}
  \mathfrak{c}_2^{\gamma}))$.

\item $\neg\Inside(p,\gamma) \,\,\mapsto\,\, \ex x
  (B(\mathfrak{c}_1^{\gamma} x p) \wedge (\mathfrak{c}_1^{\gamma} x
  \equiv \mathfrak{c}_1^{\gamma} \mathfrak{c}_2^{\gamma}))$.

\end{itemize}
These can be used to define equality and disequality for lines and
circles:
\begin{itemize}

\item $L = M \,\,\mapsto\,\, \on(\mathfrak{c}_1^L,M) \land
  \on(\mathfrak{c}_2^L,M)$.

\item $L \neq M \,\,\mapsto\,\, \ex x (\on(x,L) \land \lnot \on(x,M))$.

\item $\gamma = \delta \,\,\mapsto\,\, \mathfrak{c}_1^\gamma = 
  \mathfrak{c}_1^\delta \land \mathfrak{c}_1^{\gamma}
  \mathfrak{c}_2^{\gamma} \equiv \mathfrak{c}_1^{\delta}
  \mathfrak{c}_2^{\delta}$. 

\item $\gamma \neq \delta \,\,\mapsto\,\, \ex x (\on(x,\gamma) \land 
    \lnot \on(x,\delta))$.

\end{itemize}
More precisely, the translation involves expanding the
$\overline{\pi}$ images of the literals on the right-hand side, and
bringing the existential quantifiers to the front.

We have not yet indicated the $\overline{\pi}$-images for literals
involving the $\Intersect$ predicate.  The positive literals are
straightforwardly expressed in terms of literals that have already been
translated:
\begin{itemize}

\item $\Intersect(L,M) \,\,\mapsto\,\, L \neq M  \land \ex x (\on(x,L)
  \land \on(x,M))$.

\item $\Intersect(L,\gamma) \,\,\mapsto\,\, \ex {x,y} (x \neq y \land 
  \on(x,L) \land \on(x,\gamma) \land \on(y,L) \land \on(y,\gamma))$.

\item $\Intersect(\gamma,\delta) \,\,\mapsto\,\, \gamma \neq \delta
  \land \ex {x,y} (x \neq y \land \on(x,\gamma) \land \on(x,\delta)
  \land \on(y,\gamma) \land \on(y,\delta))$.

\end{itemize}
The negative literals, which assert nonintersection, require something
more roundabout.  For instance, we express the fact that $\alpha$ and
$\beta$ do not intersect by saying that the line segment from the
center of $\alpha$ to the center of $\beta$ encounters a point on
$\alpha$ strictly before a point on $\beta$:
\[
\neg\Intersect(\alpha,\beta) \,\,\mapsto\,\, \exists p,a,b \left[
  \begin{array}{l}
    \mathfrak{c}_1^\alpha\mathfrak{c}_2^\alpha \equiv \mathfrak{c}_1^\alpha a \,\wedge\,
    \mathfrak{c}_1^\beta\mathfrak{c}_2^\beta \equiv \mathfrak{c}_1^\beta b \,\wedge\,
    a\ne b \,\wedge\, \\
    B(\mathfrak{c}_1^\alpha ap) \,\wedge\, B(\mathfrak{c}_1^\beta bp) \,\wedge\, B(apb)
  \end{array}\right]
\]
Appropriate positive primitive $\overline{\pi}$-images for the
literals $\neg\Intersect(L,\alpha)$ and $\neg\Intersect(L,M)$ can be
found using $\overline{\pi}$-images from above, as well as the
translation for $\angle xyz = \rightangle$ which is given below. For
instance, to say that $\neg\Intersect(L,\alpha)$, we assert the
existence of points $a,b,c$, where $a$ is on $\alpha$, $b\ne c$ are on
$L$, $a$ is strictly between $\mathfrak{c}_1^\alpha$ and $b$, and
$\angle abc=\rightangle$.  Similarly, $\neg\Intersect(L,M)$ can be
expressed by asserting the existence of $a,b,c,d$, where $a\ne b$ are
on $L$, $c\ne d$ are on $M$, and the angles $\angle abc$ and $\angle
bcd$ are right angles.

The last type of literal to treat is that of metric assertions about
segments and angles.  Those for segments are more straightforward. Any
term of the segment sort will be of the form $\overline{p_1
  q_1}+\cdots+\overline{p_k q_k}$ (we can ignore occurrences of $0$;
the translation below also makes sense for ``empty sums''). Two such
sums are equal if the segments can be laid side by side along a line
so that the starting and ending points are the same. So, under our
translation,
\[ \overline{p_1 q_1}+\cdots+\overline{p_k q_k} =
	\overline{u_1 v_1}+\cdots+\overline{u_m v_m} \]
maps to
\[ \exists a_0\dots a_k,b_0\dots b_m \left[\begin{array}{l}
    B(a_0 a_1 a_2),B(a_1 a_2 a_3),\dots,B(a_{k-2} a_{k-1} a_k), \\
    B(b_0 b_1 b_2),B(b_1 b_2 b_3),\dots,B(b_{k-2} b_{k-1} b_k), \\
    (p_1 q_1\equiv a_0 a_1),(p_2 q_2 \equiv a_1 a_2),\dots, (p_k q_k \equiv a_{k-1}a_k), \\
    (u_1 v_1\equiv b_0 b_1),(u_2 v_2 \equiv b_1 b_2),\dots,(u_m v_m \equiv b_{m-1} b_m), \\
    a_0=b_0, a_k=b_m
	\end{array}\right]
\]
The translations of the other segment literals are obtained from this
one with minor changes to the last part.  Namely, the corresponding
translations are obtained by making the following indicated changes to the
last line above:
\begin{eqnarray*}
  \sum_i \overline{p_i q_i} \ne \sum_j \overline{u_j v_j} & \mapsto & a_0=b_0, a_k\ne b_m \\
  \sum_i \overline{p_i q_i} < \sum_j \overline{u_j v_j} & \mapsto & a_0=b_0, a_k\ne b_m, B(b_0,a_k,b_m) \\
  \sum_i \overline{p_i q_i} \not < \sum_j \overline{u_j v_j} & \mapsto & a_0=b_0, B(a_0 b_m a_k)
\end{eqnarray*}

For the angle literals, a little care is needed. First, note that we
can define equality and inequalities of angles as follows:
\begin{itemize}
\item $\angle xyz = \angle x'y'z' \,\,\mapsto\,\,$
  {\fontsize{8}{8}\selectfont
    \[ \exists u,v,u',v'(\underbrace{B(xuy) \wedge B(yvz) \wedge B(x'u'y') \wedge
    B(y'v'z') \wedge (uy \equiv u'y') \wedge (yv \equiv y'v')}_{=: \xi(x,y,z,x',y',z',u,v,u',v')}
    \wedge (uv\equiv u'v')).
    \]
  }

\item $\neg(\angle xyz = \angle x'y'z') \,\,\mapsto\,\,$
  {\fontsize{8}{8}\selectfont
    \[ \exists u,v,u',v' (\xi(x,y,z,x',y',z',u,v,u',v') \wedge
    (uv\not\equiv u'v')).
    \]
  }

\item $\angle xyz < \angle x'y'z' \,\,\mapsto\,\,$
  {\fontsize{8}{8}\selectfont
    \[ \exists u,v,u',v',a' (\xi(x,y,z,x',y',z',u,v,u',v') \wedge
    a'\ne v' \wedge B(u'a'v') \wedge
    (uv \equiv u'a'))
    \]
  }

\item $\neg (\angle xyz < \angle x'y'z') \,\,\mapsto\,\,$
  {\fontsize{8}{8}\selectfont
    \[ \exists u,v,u',v',a (\xi(x,y,z,x',y',z',u,v,u',v') \wedge
    B(uav) \wedge (ua \equiv u'v'))
    \]
  }
\end{itemize}
We can also say that an angle is a right angle:
\begin{itemize}
\item $\angle xyz = \rightangle \,\,\mapsto\,\,$
  {\fontsize{8}{8}\selectfont
    \[ \exists p,u,v,u',v' (x\ne y \wedge y\ne z \wedge p\ne y \wedge
    B(pyz) \wedge \xi(x,y,z,x,y,p,u,v,u',v') \wedge
    (uv \equiv u'v'))
    \]
  }
\end{itemize}
  At issue is how to compare sums of angles. Suppose we have two
  sums $\sum s_i$, $\sum t_i$ of angle terms.  In analogy to the
  segment case, we would like to take the various angles in two given
  sums, reconstruct them by ``stacking them up'' via a series of
  points around respective fixed vertices, and then compare the sums
  by measuring the resulting angles formed by the initial and final
  points.  The reason this can fail is that such a measure does not
  compare the sums themselves, but rather whether
\begin{multline*}
\min\left\{\sum s_i (\mbox{mod }2\pi), 2\pi-(\sum s_i (\mbox{mod
  }2\pi))\right\}
        = \\
\min\left\{\sum t_i (\mbox{mod }2\pi), 2\pi-(\sum t_i (\mbox{mod
  }2\pi))\right\},
\end{multline*}
so that unequal sums might be identified with one another.
(See Figure \ref{indiscernible:figure} for instance.)
\begin{figure}
\begin{center}
\psfrag{a}{$a_1$}\psfrag{b}{$a_2$}\psfrag{c}{$a_3$}\psfrag{d}{$a_4$}
\psfrag{e}{$s_1$}\psfrag{f}{$s_2$}\psfrag{g}{$s_3$}\psfrag{h}{$b$}
\psfrag{i}{$c_1$}\psfrag{j}{$c_2$}\psfrag{k}{$c_3$}\psfrag{l}{$c_4$}
\psfrag{m}{$c_5$}
\psfrag{n}{$t_1$}\psfrag{o}{$t_2$}\psfrag{p}{$t_3$}\psfrag{q}{$t_4$}
\psfrag{r}{$d$}
\includegraphics[height=2.5cm]{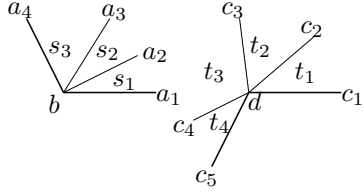}
\caption{Here $\angle a_1ba_4 = \angle c_1dc_5$, but $\sum s_i=2\pi/3$ while
        $\sum t_i=4\pi/3$.}
\label{indiscernible:figure}
\end{center}
\end{figure}

To remedy this, we do not stack the original angles. Instead, if
comparing a $k$-fold sum and an $m$-fold sum, we let $n = \max(k,m)$
and compare $n$-fold bisections of the summand angles.  The
point is that the resulting angles are guaranteed to be no greater
than the greatest of the original angles:
\[ \frac{1}{2^n}\sum_{i=1}^n \angle x_i y_i z_i \leq
	\frac{1}{2^n}(n \max_i\{\angle x_i y_i z_i\}) \leq
	\max\{\angle x_i y_i z_i\}.
\]
Thus our choice of taking $\max(k,m)$-fold bisections means that our
modified stacks all fit within one of the original angles from one of
the sums, and $\na{E}$'s setup guarantees that the term denotes an
angle less than or equal to $\pi$.  Thus we \emph{can} make the kind of
straightforward comparison of these shrunken stacks that we would
like.

Given that longwinded explanation, we will not spell out the
translation of the angle literals in detail, and will only briefly
indicate how one of them proceeds; the others result from minor
modifications of it, as with other groups of literals above.  First we
want an auxiliary $\na{T}$-formula which says ``$\angle p'q'r' =
(1/2^n)\angle pqr$,'' i.e. that the former is an $n$-fold bisection of
the latter. The following works:
\[ \exists a,b,a',b',u_1,\dots,u_n
	\left[\begin{array}{l}
		B(qap),B(qbr),B(q'a'p'),B(q'b'r'), \\
		B(a u_1 u_2),B(u_1 u_2 u_3),\dots,B(u_{n-2} u_{n-1} u_n),B(u_{n-1} u_n b), \\
		(\angle a'q'b' = \angle u_1qu_2),(\angle a'q'b'=\angle u_2qu_3),\dots,(\angle a'q'b'=\angle u_nqb), \\
		\angle aqu_1 = \angle u_1qb
	\end{array}\right]
\]
The translation of the literal
\[ \sum_{i=1}^k \angle x_i y_i z_i = \sum_{j=1}^m p_i q_i r_i \] would
then use the preceding formula, along with the formula $\xi$ from the
translations of the diagrammatic angle literals above, in order to
construct a positive primitive formula asserting the existence of two
stackings of $\max(k,m)$-fold bisections of the original angles which,
when compared in a similar fashion as the segment metric assertions
were, are seen to be equal.  The details are tedious to spell out, but
straightforward.

We now extend $\overline{\pi}$ to a translation $\pi :
\mathcal{L}(\Eu) \rightarrow \mathcal{L}(\Tarski)$ that maps every
sequent $\Gamma \fCenter \ex{\vec{x}.}\Delta$ of $\na{E}$ to a regular
  sequent of $\na{T}$. Suppose $\Gamma \fCenter \ex{\vec{x}.} \Delta$
  is of the form
\[ 
A_1,\dots,A_k \fCenter \ex{\vec{x}.} B_1,\dots,B_m, 
\]
where we have
\[ 
\overline{\pi}(A_i)=\exists\vec{z}_i\left(\bigwedge_{q=1}^{n_i}
  M_{i,q}\right), \,\,\,\,\,\,\,\,
\overline{\pi}(B_j)=\exists\vec{y}_j\left(\bigwedge_{r=1}^{p_j}
  N_{j,r}\right).
\]
Let $\Delta'$ consist of the assumption $\mathfrak{c}_1^L \neq
\mathfrak{c}_2^L$ for each line variable $L$ among $\vec x$, and the
assumption $\mathfrak{c}_1^\gamma \neq \mathfrak{c}_2^\gamma$ for each
circle variable $\gamma$ among $\vec x$. Let $\Gamma'$ consist of the
corresponding assumptions for the remaining line and circle variables
in the sequent. We define the image of this sequent, under $\pi$, to
be the regular sequent
\[ 
\Gamma', M_{1,1}, \ldots, M_{1,q_1}, \ldots M_{k,1}, \ldots, M_{k,q_k}
\fCenter \ex {\vec{x},\vec{y}_1,\dots,\vec{y}_m, \vec{\mathfrak c}}
\bigwedge \Delta' \land \bigwedge_{i=1}^m\left(\bigwedge_{r=1}^{p_i}
  N_{i,r}\right).\footnote{So, with our abuse of notation mentioned
  above, we could render this simply as
        \[ \Gamma',\overline{\pi}(A_1),\dots,\overline{\pi}(A_k) \fCenter
                \exists \vec{x},\vec{y}_1,\dots,\vec{y}_m \bigwedge
                \Delta' \wedge \bigwedge_{i=1}^m
                \overline{\pi}(B_i).
        \]}
\]
The following lemma captures all that we need to know about $\pi$.
\begin{lemma}
  \label{semantic_lemma}
  $\Gamma\fCenter \ex{\vec{x}.}\Delta$ is valid for ruler-and-compass
  constructions if and only if $\pi(\Gamma\fCenter
  \ex{\vec{x}.}\Delta)$ is.
\end{lemma}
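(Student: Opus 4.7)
The plan is to prove the equivalence by verifying, literal by literal, that each clause $\overline\pi(A)$ of the translation is \emph{semantically equivalent} to $A$ in every Cartesian plane $F\times F$ over a Euclidean field, under the stipulated representation of lines $L$ by pairs $(\mathfrak c_1^L,\mathfrak c_2^L)$ of distinct points on $L$ and of circles $\gamma$ by the center/perimeter pair $(\mathfrak c_1^\gamma,\mathfrak c_2^\gamma)$. Once we have such pointwise equivalences, the sequent-level claim follows: the conjuncts $\Gamma'$ and $\Delta'$ added by $\pi$ precisely express that the chosen $\mathfrak c$-variables are admissible representatives for each line or circle of $\Gamma\fCenter\exists\vec x.\,\Delta$, so quantifying over lines and circles in $F\times F$ is interchangeable with quantifying over the corresponding distinct-point pairs satisfying $\Gamma'\cup\Delta'$. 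Nothing about $\pi$ touches the underlying field, so validity for ruler-and-compass semantics (i.e.\ truth in every Euclidean $F\times F$) transfers in both directions.

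The bulk of the work is the case analysis. For the diagrammatic literals one checks, for instance, that $\on(p,N)$ holds iff there exist points $a,b$ symmetric across the line through $\mathfrak c_1^N$ and $\mathfrak c_2^N$ with $p$ equidistant from both --- this is the standard ``perpendicular bisector'' characterization of lines using only $\equiv$, and is valid over any Euclidean plane. Similarly $\neg\on(p,N)$ is equivalent to non-collinearity with $\mathfrak c_1^N,\mathfrak c_2^N$, which is exactly $\chi$. The same-side, between, on-circle, inside, and (non)intersection clauses are verified analogously: each clause writes a geometric condition using only $B$ and $\equiv$ whose semantic content under the Cartesian reading matches the $\na{E}$-literal. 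For equality and inequality of lines and circles, the conditions reduce to already-verified clauses via the $\mathfrak c$-representation, modulo the (trivially valid) fact that two points determine a line and that a circle is determined by its center and radius. The nonintersection translations (line/line, line/circle, circle/circle) deserve a moment's care because they must express the \emph{absence} of a transversal intersection as a positive primitive formula; one checks that the auxiliary witnessing points one introduces (for example, the point $p$ past both circles on the line joining the centers in the circle/circle case) exist exactly when the circles fail to meet transversally in $F\times F$.

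For the metric literals on segments, the laying-side-by-side translation clearly matches the additive structure of lengths in $F\times F$, and the $<$-variants use the fact that $\seg{xy}<\seg{uv}$ iff one can locate a point strictly between the endpoints of the second segment at distance $\seg{xy}$ from one end, which is again a positive primitive condition. The angle translations are verified using the $\xi$-auxiliary (Euclid's standard two-point ``copy of an angle'' construction via equidistance), together with the bisection trick. The claim to verify is that for any $k,m$, two sums $\sum_{i\le k}\angle x_iy_iz_i$ and $\sum_{j\le m}\angle x'_jy'_jz'_j$ of angles at most $\pi$ are equal (resp.\ less than) iff the corresponding stacks of $2^n$-fold bisections, where $n=\max(k,m)$, are equal (resp.\ less than) as individual angles, and this holds because each bisected stack is confined to an angle of magnitude at most $\max\{\angle x_iy_iz_i\}\le\pi$, where equality and order of stacked angles coincide with equality and order of their sums.

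The main obstacle, as already foreshadowed, is the angle case. The subtlety is that $\na{E}$'s angles are magnitudes in $[0,2\cdot\rightangle]$ while ``stacking'' angles around a vertex in a Tarski model naturally identifies angles that differ by $2\pi$ or by reflection through a line, so a naive translation would conflate distinct angle sums (as in the indiscernibility figure). The bisection device neatly resolves this, but proving the equivalence requires showing both (i) that the bisected stacks always stay within a single half-plane, so Tarski's angle comparison reflects ordinary magnitude, and (ii) that bisection is monotone and respects sums --- both of which are standard facts about Euclidean fields but need to be deployed carefully. All remaining literal cases follow the same templates and present no further difficulty; the cumulative verification of the clauses of $\overline\pi$ yields the sequent-level equivalence asserted by the lemma.
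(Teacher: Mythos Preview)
Your proposal is correct and follows essentially the same approach as the paper: reduce to verifying, for each $\na{E}$-literal $A$, that $A$ and $\overline{\pi}(A)$ are semantically equivalent in every plane $F\times F$ over a Euclidean field (under the $\mathfrak{c}$-representation of lines and circles), and then observe that the sequent-level equivalence follows from the literal-level equivalences together with the routine correspondence between line/circle quantification and quantification over representing point-pairs. The paper's own account is terser---it merely remarks that the literal-by-literal check is straightforward, points to the Technical Propositions of Section~\ref{technical:section} (plus soundness of $\na{E}$) as handling the $\sameside$ case, and defers the remaining details to \cite{dean:08}---but your more explicit discussion of the diagrammatic, segment, and angle cases (including the bisection subtlety) is exactly what that deferred verification amounts to.
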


Once we have crafted $\pi$ appropriately, the lemma is quite
straightforward to prove, given a precise articulation of the
cartesian interpretation of $\mathcal{L}(\na{E})$ and
$\mathcal{L}(\na{T})$ in the plane built on any Euclidean field.
Given the definition of $\pi$ in terms of $\overline{\pi}$, it
suffices to prove the result for sequents consisting of a single
literal; you can check that, for instance, the Technical Propositions
in Section~\ref{technical:section} prove the $\Rightarrow
\sameside(p,q,L)$ case (given the soundness of $\na{E}$).  Further
details can be found in \cite{dean:08}.

\subsection{Interpreting $\na{T}$ in $\na{E}$}
\label{last:section}

By Lemma~\ref{semantic_lemma}, we know that if a sequent $\Gamma
\fCenter \ex {\vec{x}.} \Delta$ in the language of $\na{E}$ is valid
for ruler-and-compass constructions, then so is $\pi(\Gamma \fCenter
\ex {\vec{x}.} \Delta)$. By Lemma~\ref{tarski:variant:complete}, this
implies that $\pi(\Gamma \fCenter \ex {\vec{x}.}  \Delta)$ has a
cut-free proof in $\na{T}$. All that remains is to define a mapping
$\rho$ from regular sequents in the language of $\na{T}$ to sequents
in the language of $\na{E}$, and show the following:
\begin{itemize}
\item If there is a cut-free proof of $\pi(\Gamma \fCenter \ex
  {\vec{x}.} \Delta)$ in $\na{T}$, then there is a proof of
  $\rho(\pi(\Gamma \fCenter \ex {\vec{x}.} \Delta))$ in $\na{E}$.
\item If there is a proof of $\rho(\pi(\Gamma \fCenter \ex {\vec{x}.}
  \Delta))$ in $\na{E}$, there is a proof of $\Gamma \fCenter \ex
  {\vec{x}.} \Delta$ in $\na{E}$.
\end{itemize}

Once again, we first define a translation $\overline{\rho}$ for individual
atomic formulas, and then extend the map to sequents. (And we will make
the same abuse of notation
below regarding $\rho$ as was noted for $\pi$.)
The atomic formulas are mapped as follows:
{\fontsize{10}{10}\selectfont
\begin{center}
\begin{tabular}{lcl}
  $B(pqr)$ & $\mapsto$ & $(\exists L,a,b).[a\ne b,a\ne p,a\ne q,a\ne r,b\ne p,b\ne q,b\ne r,$ \\
  & & $\OOn(a,L),\OOn(b,L),\OOn(p,L),\OOn(q,L),\OOn(r,L),\Between(a,q,b),$ \\
  & & $\neg\Between(a,q,p),\neg\Between(p,a,q),\neg\Between(q,b,r),$ \\
  & & $\neg\Between(r,q,b)]$ \\
  $\overline{B}(pqr)$ & $\mapsto$ & $\neg\Between(p,q,r),p\ne q,q\ne r$ \\
  $p=q$ & $\mapsto$ & $p=q$ \\
  $p\ne q$ & $\mapsto$ & $\neg(p=q)$ \\
  $xy\equiv vu$ & $\mapsto$ & $\overline{xy}=\overline{vu}$ \\
  $xy\not\equiv vu$ & $\mapsto$ & $\overline{xy}\ne\overline{vu}$
\end{tabular}

\end{center}
}
Why the first two are appropriate should be clear upon reflection (remembering that $\Between(p,q,r)$ is meant to be strict,
while $B(pqr)$ is not), and the others are obvious.

We now extend the map to sequents 
\[
P_1(\vec x), \ldots, P_n(\vec x) \fCenter 
\ex {\vec{y}} \left(\bigwedge_{j=1}^l Q_j(\vec{x},\vec{y})\right).
\]
Assuming each $P_i(\vec x)$ is mapped to $\ex {\vec z_i.} M_i(\vec x,
\vec z_i)$ by $\overline{\rho}$, where each $M_i$ is a set of literals,
and assuming each
$Q_j(\vec x, \vec y)$ is mapped to $\ex {\vec w_i.} N_j(\vec x, \vec
y, \vec z_j)$, the sequent above is mapped to the sequent
\[
M_1(\vec x, \vec z_1), \ldots, M_k(\vec x, \vec z_k) \fCenter
\ex {\vec y, \vec w_1, \ldots, \vec w_l.} N_1(\vec x, \vec y, \vec
z_1), \ldots, N_l(\vec x, \vec y, \vec z_l)
\]
of $\na{E}$.\footnote{Again, with abuse of notation this is just
        \[ \overline{\rho}(P_1),\dots,\overline{\rho}(P_n) \fCenter
                \exists \vec{y},\vec{w}_1,\dots,\vec{w}_l.
                \overline{\rho}(Q_1),\dots,\overline{\rho}(Q_l).
        \]}

We now proceed to establish the two properties indicated
above. The next lemma establishes the first.

\begin{lemma}
\label{rho:translation:lemma}
If there is a cut-free proof of the regular sequent
\[
P_1(\vec x), \ldots, P_n(\vec x) \fCenter 
\ex{\vec{y}} \left(\bigwedge_j Q_j(\vec{x},\vec{y})\right)
\]
in $\na{T}$, then there is a cut-free proof of its $\rho$ translation,
\[
M_1(\vec x, \vec z_1), \ldots, M_k(\vec x, \vec z_k) \fCenter
\ex {\vec y, \vec w_1, \ldots, \vec w_l.} N_1(\vec x, \vec y, \vec
z_1), \ldots, N_l(\vec x, \vec y, \vec z_l),
\]
in $\na{E}$.
\end{lemma}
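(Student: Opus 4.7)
The plan is to proceed by induction on the cut-free derivation in $\na{T}$. Since the proof is cut-free and the endsequent is regular, Negri's subformula property ensures that every intermediate sequent appearing in the derivation is itself regular; that is, it consists of atomic formulas on the left and at most one existentially-quantified conjunction of atomic formulas on the right. So we only need to handle initial sequents, the quantifier and conjunction rules needed to manipulate regular sequents, the two negativity rules, and the geometric rule schemes (GRS) corresponding to Tarski's axioms. The $\rho$-translation commutes straightforwardly with the logical structure: a left-occurring atom $P(\vec x)$ contributes its $\overline\rho$-image (a block of $\na{E}$-literals witnessed by new existential variables, added to $\Gamma$); and an existentially-quantified right-hand conjunction becomes an existentially-quantified conjunction of $\na{E}$-literals, with the Skolem-like witnesses of $\overline\rho$ absorbed into the outer existential block. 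So axioms and the purely logical steps translate directly to identity and quantifier manipulations in $\na{E}$.

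The substantive work, and the inductive invariant that needs maintaining, is then to show that for each GRS of $\na{T}$, the $\rho$-translation of its conclusion is derivable in $\na{E}$ from the $\rho$-translations of its premises. First I would dispatch the easy cases. The equidistance axioms \Esymm, \Etrans, \Eid, the congruence/equality rules, and the negativity rules \NNeg\ all translate to statements about the abelian-group structure on segments, which are handled by the metric inference rules in Section~\ref{metric:section} together with simple case splits on equality of points. The betweenness axiom \Betw\ becomes a direct diagrammatic consequence of the between axioms once one unpacks the $\overline\rho$-image of $B(pqr)$ (which builds in the collinearity by exhibiting a witnessing line) and pays attention to the fact that $\na{E}$'s $\mybetween$ is strict while Tarski's $B$ is not; the needed case splits on coincidences among $p,q,r$ are available.

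Next I would verify the genuinely geometric rules. \Seg\ is a direct formalization of the construction rule ``extend the segment from $q$ to $a$ by a length congruent to $bc$,'' which can be proved in $\na{E}$ by invoking the circle with center $a$ and radius $\seg{bc}$ (itself obtained via Proposition~I.2, already proved in Section~\ref{examples:section}), intersecting it with the line through $q$ and $a$, and picking the point that extends $qa$. \Pasch\ is proved in $\na{E}$ from the Pasch axioms of Section~\ref{diagram:section} combined with the intersection rules; the strict/non-strict mismatch again requires a small case split. The Five-Segment axiom \Five\ is essentially Euclid's SAS, which is available via the superposition rule of Section~\ref{superposition:section} together with some bookkeeping about adjacent angles. \Euc\ is the parallel postulate, which corresponds directly to our diagram-angle transfer axiom~5. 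The 2-dimensional axioms \LD\ and \UD\ require picking three noncollinear points (available by the construction rules plus a short argument) and arguing about the three betweenness cases along the perpendicular bisector of a chord, respectively. Finally, \Int\ is the line--circle intersection principle, which reduces to intersection rule~3 of Section~\ref{diagram:section}, together with axioms 2 and 3 of diagram-segment transfer to identify the intersection point as the one on the prescribed circle.

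The main obstacle, I expect, will be the $\overline\rho$-translation bookkeeping rather than any one geometric step. Because $\overline\rho$ on a Tarski atom like $B(pqr)$ existentially introduces a line and auxiliary points, translating a GRS premise-by-premise yields sequents whose left-hand side is cluttered with many such witnesses, and one must verify that the $\na{E}$-derivation of each axiom tolerates arbitrary choices of these witnesses and introduces its own when proving the conclusion. A related nuisance is the strict/non-strict betweenness discrepancy: whenever the Tarski rule would allow a degenerate instance (e.g.\ two of $p,q,r$ coinciding in $B(pqr)$), the $\na{E}$-derivation must begin with a case split on equalities of points and dispatch the degenerate sub-cases separately. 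To keep the induction clean I would strengthen the inductive hypothesis slightly: the $\rho$-translation preserves provability up to arbitrary weakening of the left context by $\na{E}$-literals that are consistent with a diagram, so that the extra witness variables introduced by $\overline\rho$ on premises can be carried through without interfering with the rule being simulated. With this strengthened hypothesis in hand, each GRS is handled by a bounded, essentially local $\na{E}$-derivation, and the global induction goes through.
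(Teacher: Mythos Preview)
Your approach is essentially the same as the paper's: induct on the cut-free $\na{T}$-derivation and simulate each rule in $\na{E}$. Your case-by-case treatment of the GRS's is in fact more detailed than the paper's, which dispatches most of them with ``none of which are interesting or enlightening.''

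The one place where you diverge is in the bookkeeping. You anticipate trouble from the witness variables that $\overline\rho$ introduces and propose strengthening the inductive hypothesis to allow arbitrary weakening by consistent literals. The paper avoids this entirely with a cleaner reduction: for each regular GRS with side formulas $B_1,\ldots,B_m$ in the conclusion and $A_1,\ldots,A_n$ added in the premise, it suffices (by the inductive hypothesis plus $\na{E}$'s theorem-application rule) to prove the single $\na{E}$-sequent
\[
\overline{\rho}(B_1),\ldots,\overline{\rho}(B_m) \fCenter \exists\vec x.\ \overline{\rho}(A_1),\ldots,\overline{\rho}(A_n).
\]
Since theorem application in $\na{E}$ already handles renaming and context, the witness variables from $\overline\rho$ cause no difficulty and no strengthened hypothesis is needed. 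Your version would work, but the paper's observation is simpler and worth adopting.
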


\begin{proof}
        We proceed by induction on the proof in $\na{T}$.  We need to
        show that every inference of $\na{T}$ is mirrored by a proof
        in $\na{E}$.  The logical axioms and the logical rules which can
        appear in a cut-free proof of a regular sequent are already
        incorporated into the machinery of $\na{E}$:
        \begin{itemize}
                \item (Left/right conjunction rules).  We note that we do
                not have the symbol $\wedge$ in the language of $\na{E}$;
                instances of it get unpacked via the translation $\rho$.
                The left rules becomes vacuous, and the right
                rule is easily checked to be a derived rule of $\na{E}$
                (as an instance of theorem application).

                \item (Right exists rule). Similarly, uses of this
                  rule disappear in the translation.

                \item (Left falsum rules).  The effects of
                these rules are subsumed under $\na{E}$'s notion of direct
                consequence.

                \item (Negativity axioms).  Similarly straightforward.
        \end{itemize}

        We are left with the remaining GRS's from
        Section~\ref{tarski:section}.  With one exception, these are of the
        form
        \begin{center}
        \begin{prooftree}
                \AXN{A_1,\dots,A_n,\Pi \fCenter \Theta}
                \UIN{B_1,\dots,B_m,\Pi \fCenter \Theta}
        \end{prooftree}
        \end{center}
        which is to say, they correspond to the Tarskian axioms which are
        regular.  In these cases, it suffices by the induction hypothesis
        to show that $\na{E}$ proves
        \[ \overline{\rho}(B_1),\dots,\overline{\rho}(B_m) \fCenter
                \exists \vec{x}.
                \overline{\rho}(A_1),\dots,\overline{\rho}(A_n).
        \]
        Note that we are using the abuse of notation described in the
        last section. Checking the details of this for the various
        regular GRS's is pretty painless.  For instance:
        \begin{itemize}
                \item $(\Esymm,\Etrans,\Eid)$.  Given the trivial nature of
                $\overline{\rho}$ for $\equiv$ statements, it is easy to see
                that these cases are handled by our metric rules.

                \item $(\LD)$.  Let $a$ be a point.  Construct a point $b\ne a$.
                Construct line $L$ through $a,b$.  Construct a point $c$ that
                is not on $L$.  Each of $\Between(a,b,c)$ or $\Between(b,a,c)$
                or $\Between(a,c,b)$ leads to $\OOn(c,L)$, hence a
                contradiction.  Thus in $\na{E}$ we can conclude
                $\neg\Between$ for each.  One can check the definitions of
                $\LD$ and $\overline{\rho}$ to see that we have done what is
                needed.

              \item $(\Seg)$.  The Technical Propositions in
                Section~\ref{technical:section} provide the needed
                $\na{E}$ constructions here.

                \item We omit the remaining cases, some of which are slightly
                more involved, but none of which are interesting or enlightening.
        \end{itemize}
        All that remains is the sole GRS which is not regular, the upper
        two-dimensional axiom.  The situation is not really all that
        different from the regular cases; what we have to show, given the
        inductive hypothesis, is only slightly different.  

        The following suffices.  Suppose we have $a\ne b$, and
        $\overline{x_i a}= \overline{x_i b}$ for $i=1,2,3$.  We need
        $\na{E}$ to prove that two instances of
        $\neg\Between(x_i,x_j,x_k)$ hold.  We reason by cases;
        \emph{{\`a} la} Euclid we present only the case in which all
        the $x_i$ are distinct, as the other cases are only easier.

        For each $i$, construct circle $\gamma_i$ with center $x_i$,
        passing through $b$.  Construct line $L$ through $a,b$.  By
        Proposition I.12 (formalized in $\na{E}$ above), construct
        line $M$ perpendicular to $L$.  It is then a direct
        consequence that each $x_i$ is on $M$.

        Once again, we reason by cases, considering each parity for each
        possible $\Between(x_i,x_j,x_k)$; there are eight cases
        (omitting symmetry in the $\Between$ arguments).  In the four
        for which two positive $\Between$ relations were to hold,
        $\na{E}$ derives a contradiction.  In the other four cases, we
        have two negative instances, which is what we needed.
\end{proof}

Given the previous lemma, we are almost home.  We have shown that if
$\Gamma \fCenter \ex {\vec{x}.} \Delta$ is a valid sequent of
$\na{E}$, then there is a cut-free proof of $\pi(\Gamma \fCenter \ex
{\vec{x}.} \Delta)$ in $\na{T}$, and hence a proof of $\rho(\pi(\Gamma
\fCenter \ex {\vec{x}.} \Delta))$ in $\na{E}$. The trouble, of course,
is that $\rho(\pi(\Gamma \fCenter \ex{\vec{x}.} \Delta))$ is not quite
the same thing as $\Gamma \fCenter \ex{\vec{x}.}\Delta$. For one
thing, the lines and circles in the original sequent have been
replaced by pairs of points representing them; and the translated
sequent will typically feature extra points and hypotheses in both the
antecedent and consequent.  The next two lemmas demonstrate that, from
the $\Eu$ proof of the translated proposition, we can in fact recover
a proof of the original proposition, $\Gamma \fCenter
\ex{\vec{x}.}\Delta$.
\begin{lemma}
\label{aux:lemma}
        Let $M(\vec{x})$ be any literal of $\na{E}$.  Suppose that
        \[ \overline{\pi}(M) = \exists \vec{z} \bigwedge_{j=1}^m
                Q_j(\vec{x},\vec{z}), \]
        and further that
        \[ \overline{\rho}(Q_j) = \exists \vec{y}_j.
                A_{j,1},\dots,A_{j,n_j}. \]
Let $\hat \Theta$ consist of the assumptions 
\[
\{ \mathfrak{c}_1^L \neq \mathfrak{c}_2^L, \on(\mathfrak{c}_1^L,L),
\on(\mathfrak{c}_2^L,L) \}
\] for each line variable $L$ in $M$, 
and the assumptions 
\[
\{ \mycenter(\mathfrak{c}_1^\gamma,\gamma), \on(\mathfrak{c}_2^\gamma,
\gamma) \}
\] for each circle variable $\gamma$ in $M$.
        Then $\na{E}$ proves both
        \begin{itemize}
                \item[$(1)$] $\hat \Theta, M \fCenter \exists \vec{z},\vec{y}_1,\dots,\vec{y}_m.
                                A_{1,1},\dots,A_{1,n_1},\dots,A_{m,1},\dots,A_{m,n_m}$.

                \item[$(2)$] $\hat \Theta, A_{1,1},\dots,A_{1,n_1},\dots,A_{m,1},\dots,A_{m,n_m}
                                \fCenter \ex {\vec x.} \hat M$,
        \end{itemize}
        where $\vec x$ are the line and circle variables in
        $M$. Moreover, $\na{E}$ proves all sequents of the form
\[
\mathfrak{c}_1^L \neq \mathfrak{c}_2^L \fCenter \ex {L.} 
  \on(\mathfrak{c}_1^L,L), \on(\mathfrak{c}_2^L,L), 
\]
  and
\[
\mathfrak{c}_1^\gamma \neq \mathfrak{c}_2^\gamma \fCenter \ex {\gamma.
} \mycenter(\mathfrak{c}_1^\gamma,\gamma), \on(\mathfrak{c}_2^\gamma,
\gamma).
\]
\end{lemma}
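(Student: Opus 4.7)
The plan is to proceed by cases on the form of the literal $M(\vec x)$. Since the language of $\na{E}$ contains only finitely many primitive relation symbols, and $\overline{\pi}$ is defined separately on each kind of literal in Section~\ref{translating:section}, it suffices to verify (1) and (2) for each literal type in turn. For each type, the strategy is symmetric: to establish (1), assume $M$ together with the defining data $\hat\Theta$ for the lines and circles appearing in $M$, and use the construction rules of Section~\ref{construction:rules:section} together with the diagrammatic, metric, and transfer axioms to produce the auxiliary points $\vec z, \vec y_1,\dots,\vec y_m$ and verify the $\overline{\rho}$-images $A_{j,k}$; to establish (2), assume the $A_{j,k}$ and the $\hat\Theta$-data, and derive $M$ by direct diagrammatic or metric consequence.

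Many cases are essentially bookkeeping. Point equality and inequality are preserved on the nose. The Tarski equidistance literals $xy \equiv uv$ and $xy \not\equiv uv$ unpack to the segment literals $\seg{xy} = \seg{uv}$ and $\seg{xy} \neq \seg{uv}$, so those cases are immediate. The cases $\on(p,L)$, $\neg\on(p,L)$, $\sameside(p,q,L)$, $\neg\sameside(p,q,L)$, $\OnCirc(p,\gamma)$, $\Inside(p,\gamma)$, and their negations are handled by routine applications of the incidence, Pasch, and circle axioms; in particular, the two Technical Propositions of Section~\ref{technical:section} already supply exactly the round-trip reasoning needed for the $\sameside$ clauses, and the diagram-segment transfer axioms 3 and 4 handle $\OnCirc$ and $\Inside$. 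The positive $\Intersect$ cases reduce by definition to combinations of $\on$ literals that have been treated.

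The substantive work is concentrated in two groups of cases. First, the translations of $\Between$ and $\neg\Between$ feature a large configuration of auxiliary points $x,y,z,f,g,h$ laid out on two transversal lines through $q$. For direction (1), given strict betweenness we pick a point off the line through $a,b,c$, form two new lines through it and extend appropriately using the construction rules for a point on a line extending a segment; direction (2) then follows from the Pasch and betweenness axioms. Second, the translations of $\neg\Intersect(L,M)$ and $\neg\Intersect(L,\alpha)$ refer to perpendicular lines and the $\rightangle$ constant: for direction (1) one invokes Proposition I.12 (already formalized in Section~\ref{technical:section}) to drop a perpendicular, and for direction (2) the diagram-angle transfer axiom 3 together with the intersection rules forces the required nonintersection. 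The angle-sum metric literals are the most intricate: following the recipe in Section~\ref{translating:section}, one iterates Proposition I.9 to build $\max(k,m)$-fold bisections of each summand, stacks them at a common vertex using the point-on-line constructions, and compares the extreme rays. The principal obstacle throughout is keeping careful track of the nondegeneracy side-conditions (distinctness of points, non-incidence with lines) demanded by each construction rule, and checking that $\hat\Theta$ together with the $A_{j,k}$ suffice to discharge them.

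Finally, the two auxiliary sequents about introducing a line through two distinct points and a circle around a center through a given point are one-line proofs in $\na{E}$: in each case, the hypothesis is precisely the prerequisite of the corresponding rule from the \textbf{Lines and circles} construction list in Section~\ref{construction:rules:section}, and the conclusion is exactly the postcondition recorded there, so a single application of the rule closes the sequent.
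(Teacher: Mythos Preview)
Your proposal is correct and follows essentially the same approach as the paper: a case analysis over the finitely many literal shapes of $\na{E}$, with the last two sequents dispatched immediately by the line and circle construction rules. The paper's own proof is in fact terser than yours---it explicitly works only the cases $\Between(p,q,r)$, $\OnCirc(p,\gamma)$, $\seg{xy}=\seg{zw}$, and $\seg{xy}<\seg{zw}$, and omits the rest as ``slightly more involved, but \ldots\ not interesting or enlightening.'' One small imprecision: your description of the $\Between$ case (``pick a point off the line through $a,b,c$, form two new lines through it'') actually describes the configuration arising from $\overline{\pi}(\neg\Between)$; the positive case $\overline{\pi}(\Between(p,q,r))$ is just $B(pqr)\wedge p\ne q\wedge q\ne r\wedge p\ne r$, and the auxiliary points introduced by $\overline{\rho}(B(pqr))$ lie \emph{on} the line through $p,q,r$, not off it.
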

Before getting to the proof, we note that clause (1) of the lemma just
says that $\na{E}$ proves $\hat \Theta, M \fCenter
\overline{\rho}(\overline{\pi}(M))$ for any literal.  Moreover, with
our abuse of notation we can render the second part more perspicuously
as asserting that $\na{E}$ proves $\hat \Theta,
\overline{\rho}(\overline{\pi}(M)) \fCenter M$.
\begin{proof}
        The last two claims in the lemma are immediate, using the
        construction rules of $\na{E}$. For the first two claims, 
        in order to avoid needless tedium, we indicate details for only a
        few cases (and also indicate how trivial some of the cases are).
        \begin{itemize}
                \item $(\Between(p,q,r))$.  We need to show that
                $\Between(p,q,r)$ is inter-derivable with
                \begin{multline*}
                \exists L. \OOn(p,L),\OOn(q,L),\OOn(r,L),
                        \neg\Between(p,r,q),\neg\Between(q,p,r), \\
                        p\ne q,
                        q\ne r,p\ne r.
                \end{multline*}
                Supposing the latter, we can conclude $\Between(p,q,r)$
                from the sixth betweenness rule.

                For the converse, suppose $\Between(p,q,r)$.
                A couple of applications of our first
                betweenness rule yield $\neg\Between(q,p,r)$,
                $\neg\Between(p,r,q)$ and the distinctness assertions.
                Construct line $L$ through $p,q$; $r$ is on $L$ as well,
                by the sixth and second betweenness rules.

              \item ($\OnCirc(p,\gamma)$ or $\neg\OnCirc(p,\gamma)$).
                This is immediate from the diagram-segment transfer
                axioms.

                \item ($\overline{xy}=\overline{zw}$ or $\overline{xy}\ne
                \overline{zw}$).  Similarly easy.

                \item ($\overline{xy}<\overline{zw}$).  In this case we
                need to show that the literal is inter-derivable with
                \begin{multline*}
                \exists a,L. \OOn(z,L),\OOn(a,L),\OOn(w,L),a\ne w,
                        z\ne w,\\ \neg\Between(a,z,w),\neg\Between(z,w,a),
                        \overline{xy}=\overline{za}.
                \end{multline*}
                Suppose the latter.  In case $z\ne a$, it follows that
                $\Between(z,a,w)$ (betweenness rule 6).  Then
                $\overline{za}+\overline{aw}=\overline{zw}$ (diagram-segment
                rule 1).  As $a\ne w$, $\overline{aw}>0$ (first metric
                inference).  By our linear arithmetic, then,
                $\overline{zw}>\overline{xy}$ as desired.  In the case
                $z=a$, we have $\overline{xy}=\overline{za}=0$ and
                $\overline{zw}=\overline{aw}$.  As $a\ne w$,
                $\overline{aw}>0$, so again we have $\overline{zw}>
                \overline{xy}$.

                Conversely, suppose $\overline{xy}<\overline{zw}$.  So
                $\overline{zw}>0$, hence $z\ne w$.  Construct line $L$
                through $z$ and $w$.  In case $x=y$, then $z$ itself will
                be our $a$.  In case $x\ne y$, apply Proposition I.2 to get
                a $b$ such that $\overline{xy}=\overline{zb}$.  Draw circle
                $\beta$ through $b$ centered at $z$.  As $z$ is inside
                $\beta$ and on $L$, we know that $\beta$ and line $L$
                intersect.  Since $\overline{zb}=\overline{xy}<\overline{zw}$,
                we know that $w$ lies outside $\beta$.  Thus we may take
                the intersection point $a$ of $\beta$ and $L$ such that
                $\Between(z,a,w)$ (by the fourth intersection construction
                rule).  This is the $a$ we need.

                \item ($\overline{xy}\not <\overline{zw}$).  Similar to the
                previous.
        \end{itemize}
\end{proof}

\begin{lemma}
  \label{main_lemma_2}
  If $\rho(\pi(\Gamma \fCenter \ex {\vec x.} \Delta))$ is provable in
  $\na{E}$, then so is $\Gamma \fCenter \ex {\vec x.}  \Delta$.
\end{lemma}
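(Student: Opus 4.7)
The plan is to use Lemma \ref{aux:lemma} as the main workhorse, together with the fact that in $\na{E}$ we can always pick two distinct points on a given line and can always construct a line (resp.\ a circle) from a pair of distinct points (resp.\ from a center and a point on it). Schematically, I want to show that one can derive $\Gamma \fCenter \ex{\vec x.}\Delta$ by sandwiching the assumed proof of $\rho(\pi(\Gamma \fCenter \ex{\vec x.}\Delta))$ between a preparatory phase (passing from $\Gamma$ to the hypotheses of the translated sequent) and a closing phase (passing from the conclusions of the translated sequent back to $\Delta$).

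More concretely, suppose $\Gamma = \{A_1,\dots,A_n\}$ and $\Delta = \{B_1,\dots,B_l\}$. First I would reason from $\Gamma$: for each line variable $L$ appearing in the original sequent but not in $\vec x$, apply the construction rule ``let $\mathfrak c_1^L,\mathfrak c_2^L$ be distinct points on $L$'' (and analogously for circles, using the uniqueness of centers to pick $\mathfrak c_1^\gamma$ as the center of $\gamma$ and $\mathfrak c_2^\gamma$ as an arbitrary point on $\gamma$). This makes available the distinctness literals $\Gamma'$ and the assertions $\hat\Gamma$ relating each line/circle variable to its pair of representative points. Then, for each $A_i$, clause (1) of Lemma \ref{aux:lemma} yields $\overline\rho(\overline\pi(A_i))$, so that we have derived exactly the antecedent $\Gamma',\overline\rho(\overline\pi(A_1)),\dots,\overline\rho(\overline\pi(A_n))$ of $\rho(\pi(\Gamma\fCenter\ex{\vec x.}\Delta))$.

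Next, I apply the assumed $\na{E}$-proof of $\rho(\pi(\Gamma\fCenter\ex{\vec x.}\Delta))$ as a lemma, which produces its consequent. This consequent existentially introduces (i) for each line variable $L$ in $\vec x$, the points $\mathfrak c_1^L,\mathfrak c_2^L$ together with $\mathfrak c_1^L\neq\mathfrak c_2^L$, (ii) analogous data for each circle variable in $\vec x$, and (iii) the positive-primitive witnesses guaranteeing $\overline\rho(\overline\pi(B_j))$ for each $j$. At this stage I use the last clause of Lemma \ref{aux:lemma} to introduce, for each line variable $L\in\vec x$, the line through $\mathfrak c_1^L$ and $\mathfrak c_2^L$ (noting that $\mathfrak c_1^L\neq\mathfrak c_2^L$ is available), and similarly to introduce the circle with center $\mathfrak c_1^\gamma$ through $\mathfrak c_2^\gamma$ for each circle variable. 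After this step, $\hat\Theta$ (in the sense of Lemma \ref{aux:lemma}) holds for the reconstructed lines and circles. Finally, clause (2) of the auxiliary lemma converts each $\overline\rho(\overline\pi(B_j))$ back into $B_j$, and we have derived $\ex{\vec x.}\Delta$ as required.

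The main obstacle is bureaucratic rather than conceptual: one must keep the variable bindings straight across the three layers of existentials introduced by $\vec x$, by $\overline\pi$, and by $\overline\rho$, and verify that Lemma \ref{aux:lemma} does indeed interface cleanly with the formal proof-tree discipline of $\na{E}$ (in particular, that the fresh witnesses produced in the preparatory phase do not collide with the variables in $\vec x$ and that the ``hat'' hypotheses $\hat\Theta$ are in play at the moments clauses (1) and (2) are invoked). Once one adopts the standard convention that all bound variables are renamed apart, the argument is a straightforward composition of Lemma \ref{aux:lemma} with the construction rules of $\na{E}$, and no further geometric content is required beyond what has already been established.
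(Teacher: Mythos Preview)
Your proposal is correct and follows essentially the same approach as the paper: both arguments sandwich the assumed $\na{E}$-proof of $\rho(\pi(\Gamma\fCenter\ex{\vec x.}\Delta))$ between a preparatory phase using clause~(1) of Lemma~\ref{aux:lemma} (after introducing the representative points $\mathfrak{c}_1^L,\mathfrak{c}_2^L$ and $\mathfrak{c}_1^\gamma,\mathfrak{c}_2^\gamma$) and a closing phase using the last clause of Lemma~\ref{aux:lemma} to reconstruct the line and circle variables in $\vec x$, followed by clause~(2) to recover each $B_j$. Your remark that the difficulty is purely bureaucratic variable management is exactly right and matches the paper's treatment.
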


\begin{proof}
  Let $\hat \Gamma$ and $\hat \Delta$ be the sets of formulas
  described at the beginning of Section~\ref{translating:section}.
  Using our abuses of notation, our supposition is that $\na{E}$
  proves
  \[ \overline{\rho}(\overline{\pi}(\Gamma)) \fCenter \ex
  {\vec{z}.} \overline{\rho}(\overline{\pi}(\Delta)).
  \]
  Repeated application of clause (1) of Lemma~\ref{aux:lemma} shows
  that $\na{E}$ proves
  \[ \Gamma, \hat \Gamma \fCenter \ex {\vec u.}
  \overline{\rho}(\overline{\pi}(\Gamma)), \] where $\vec u$ are the
  new variables picked up in the translation. Using theorem
  application, $\na{E}$ proves
\[
  \Gamma, \hat \Gamma \fCenter \ex
  {\vec{z}, \vec{u}.} \overline{\rho}(\overline{\pi}(\Delta)).
\]
  The last part of Lemma~\ref{aux:lemma} shows that $\na{E}$ proves
\[
\Gamma, \hat \Gamma \fCenter \ex {\vec{z}, \vec{u}, \vec v.} \hat
\Delta, \overline{\rho}(\overline{\pi}(\Delta)),
\]
where $\vec v$ are the line and circle variables among $\vec x$, lost
in the translation back and forth, and now restored.  Clause (2) of
Lemma~\ref{aux:lemma} then shows that $\na{E}$ proves
\[
  \Gamma, \hat \Gamma \fCenter \ex {\vec z, \vec u, \vec v.} \hat \Delta, 
    \Delta.
\]
  Since the all the variables $\vec x$ are among $\vec z, \vec u, \vec
  v$, the sequent
\[
  \Gamma, \hat \Gamma \fCenter \ex {\vec x.} \Delta
\]
is subsumed by the previous one. Since $\na{E}$ can also prove $\Gamma
\fCenter \ex{\vec {\mathfrak{c}}}. \hat \Gamma$ for the new point
variables that occur in $\hat \Gamma$, it can prove
\[
  \Gamma \fCenter \ex {\vec x.} \Delta,
\]
  as required.
\end{proof}

Putting everything together, we have the proof of the completeness
theorem.

\begin{proof}[Proof of Theorem~\ref{completeness:theorem}]
  Suppose that $\Gamma\fCenter \ex{\vec{x}.} \Delta$ is valid for
  ruler-and-compass constructions. By Lemma \ref{semantic_lemma},
  $\pi(\Gamma\fCenter \ex{\vec{x}.} \Delta)$ is a valid sequent in the
  language of $\na{T}$. By Lemma~\ref{tarski:variant:complete}, there
  is a cut-free proof of that sequent in $\na{T}$. By
  Lemma~\ref{rho:translation:lemma}, $\rho(\pi(\Gamma\fCenter
  \ex{\vec{x}.} \Delta))$ is provable in $\na{E}$. By
  Lemma~\ref{main_lemma_2}, $\Gamma\fCenter \ex{\vec{x}.} \Delta$ is
  provable in $\na{E}$, as required.
\end{proof}

\section{Implementation}
\label{implementation:section}

In Section~\ref{direct:section}, we argued that the set of one-step
inferences in $\na{E}$ is decidable, as one would expect from any
formal system. But given the fact that we are trying to model the
inferential structure of the \emph{Elements}, there is the implicit
claim that verifying such inferences is within our cognitive
capabilities, at least at the scale of complexity found in the proofs
in the \emph{Elements}. ``Cognitively feasible'' does not always line
up with ``computationally feasible,'' and it is often quite
challenging to get computers to emulate common visual tasks. But, of
course, our case would be strengthened if we could show that our
inferences are computationally feasible as well.

In fact, our analysis should make it possible to design a
computational proof checker based on $\na{E}$ that takes, as input,
proofs that look like the ones in the \emph{Elements}, and verifies
their correctness against the rules of the system. In this section, we
describe some preliminary studies that suggest that general purpose
tools in automated reasoning are sufficient for the task.\footnote{As
  part of his MS thesis work at Carnegie Mellon, Benjamin Northrop has
  written code in Java that carries out diagrammatic inferences using
  an eager saturation method: whenever a new object is added to the
  diagram, the system closes the diagram under rules and derives
  \emph{all} the atomic and negation atomic consequences. The system
  works on small examples, but gets bogged down with diagrams of
  moderate complexity. But this does not rule out the fact that more
  sophisticated representations of the diagrammatic data might render
  such an approach viable. See the discussion later in this section.}

In Section~\ref{direct:section}, we noted that any fact obtained by a
direct diagram inference is contained in the set of first-order
consequences of the set of our universal axioms and the set of
literals constituting the diagram. Furthermore, there are no function
symbols in the language. These types of problems are fairly easy for
off-the-shelf theorem provers for first-order logic. We entered our
betweenness, same-side, and Pasch axioms in the standard TPTP format
(``Thousands of Problems for Theorem Provers,''), described a simple
diagram with five lines and six points, and checked a number of
consequences with the systems E \cite{schultz:02} (no relation to our
``E'') and Spass \cite{weidenbach:et:al:07}. The consequences were
verified instantaneously.

There is also a class of systems called ``satisfiability modulo
theories'' solvers, or SMT solvers for short, which combine decision
procedures for provability of universal sentences modulo the
combination of disjoint theories whose universal fragments are
decidable \cite{manna:zerba:03}. Such systems typically include very
fast decision procedures for linear arithmetic (that is, the linear
theory of the reals). This is particularly helpful to us, since our
metric inferences are of this sort.  Unfortunately, SMT solvers do not
provide complete decision procedures for the set of consequences of
arbitrary universal axioms, which is what is needed to verify our
diagrammatic and transfer inferences. Nonetheless, some solvers, like
Z3 \cite{demoura:bjorner:08} and CVC3 \cite{barrett:tinelli:07}
provide heuristic instantiation of quantifiers. The advantage to using
such systems is that they can handle not just the diagrammatic
inferences, but the metric and transfer inferences as well. We entered
all our axioms in the standard SMT format, and tested it with the two
systems just mentioned. The results were promising; most inferences
were instantaneous, and only a few required more than a few seconds.
The diagram, axioms, and test queries can be found online, at Avigad's
home page.

The fact that SMT solvers can handle arbitrary quantifier-free logic,
and the fact that one can incrementally add and retract statements
from the database of asserted facts, suggests that SMT solvers can
provide a complete back end to a proof checker for $\na{E}$. The proof
checker then need only parse an input proof, assert the relevant facts
to the SMT solver, and check the claimed consequences. More
specifically, when the user asserts a theorem, the proof checker
should declare the new objects (points, lines, and circles) to the SMT
solver, assert the assumptions to the SMT solver, and store the
conclusion. When the user enters a construction rule, the proof
checker should check that the prerequisites are consequences of the
facts already asserted to the SMT solver, create the new objects, and
assert their properties. Applying a previously proved theorem is
handled in a similar way. When a user enters ``hence $A$,'' the proof
checker should check that $A$ is a consequence of the facts already
asserted to the SMT solver, and, if so, assert it explicitly to the
SMT database, to facilitate subsequent inferences. To handle
suppositional reasoning (that is, proof by contradiction, or a branch
of a case split), the proof checker should ``push'' the state of the
SMT database and temporarily assert the local hypothesis, and then,
once the desired conclusion is verified, ``pop'' the state and assert
the resulting conditional. Finally, when the user enters ``Q.E.D.'' or
``Q.E.F.'', the proof checker need only check that the negation of the
theorem's conclusion is inconsistent with the facts that have been
asserted to the SMT solver.

Finally, we note that there has been recent work unifying resolution
and SMT frameworks, for example, with the Spass+T system
\cite{prevosto:waldmann:06}. Such a system should be well-suited to
verifying the inferences of $\na{E}$.

Our explorations are only preliminary, and more experimentation is
needed to support the claim that ordinary Euclidean inferences can be
checked efficiently. Moreover, performance can be sensitive to the
choice of language and the formulation of the axioms. For example, we
were surprised to find that performance was reduced when we replaced
our strict ``between'' predicate with a nonstrict one (presumably
because many additional facts, like $\mybetween(a,a,b)$, were
generated). Thus the data which we report is only suggestive. 

We emphasize that the point of these explorations is to show that it
should be possible to verify, automatically, proof texts which closely
approximate the proofs in the \emph{Elements}. From the point of fully
automated geometric reasoning, our methods are fairly simplistic.
There are currently at least four approaches to proving geometric
theorems automatically. The first is to translate the theorem to the
language of real closed fields and use decision procedures, based on
cylindrical algebraic decomposition \cite{collins:75}, for the latter;
but, in practice, this is too slow even for very simple geometric
theorems. A second method, known as Wu's method \cite{wu:94},
similarly translates geometric statements into algebraic problems and
uses computational algebraic techniques.  The method is stunningly
successful at verifying many difficult geometric theorems, but it
cannot handle the order relation between magnitudes, or the
``between'' predicate for points on a line; and so it is inadequate
for much of the \emph{Elements}. It is also limited to statements that
can be translated to universal formulas in the language of fields. A
third method, known as the area method \cite{chou:gao:zhang:94}, has
similar features. Finally, there are so-called ``synthetic methods,''
which use heuristic proof search from geometric axioms. Our methods
fall under this heading, but are not very advanced. One would expect
to do better with intelligent heuristics and more efficient
representations of diagrammatic information, along the lines described
by Chou, Gao, and Zhang \cite{chou:gao:zhang:94}. (See also
\cite{chou:gao:01} for an overview of the various methods.)

In other words, our work does not constitute a great advance in
automated geometric theorem proving, even for the kinds of theorems
one find in the \emph{Elements}. Our methods show how to verify the
smaller, diagrammatic inferences in Euclid's proofs, given the
higher-level structure, and, most importantly, the requisite
construction. It is an entirely different question as to how a system
might be able to \emph{find} such a construction automatically. We
have not addressed this question at all.

We do hope, however, that our analysis of the way that Euclidean
reasoning combines metric and diagrammatic components can provide some
useful insights towards modeling proof search in structured domains.
Rather than model geometry as a first-order axiomatic system, we have
taken advantage of specific features of the domain that reduce the
search space dramatically. Particularly notable is the way that we
understand Euclidean proofs as building up contexts of data (in our
case, ``diagrammatic information'' and ``metric information'') that
can be handled in domain-specific ways. In other words, adding objects
``to the diagram'' and inferring metric consequences means adding
information to a database of local knowledge that will be accessed and
used in very particular ways. We expect that such approaches will be
fruitful in modeling other types of mathematical reasoning as well.

\section{Conclusions}
\label{conclusions:section}

We conclude by summarizing what we take our analysis of Euclidean
proof to have accomplished, discussing questions and other work
related to our project, and indicating some of the questions
and broader issues that our work does not purport to address.

\subsection{Summary of results}

We claim to have a clean analysis of the argumentative structure of
the proofs in Books I to IV of the \emph{Elements}. We tried to make
this claim more precise in Section~\ref{characterizing:section} by
discussing the features of the \emph{Elements} that we have tried to
model. We have also gone out of our way, in
Section~\ref{comparison:section}, to indicate ways in which proofs in
our formal system differ from Euclid's. 

It is important to keep in mind that modeling the ``argumentative
structure'' of the \emph{Elements} is not just a matter of modeling
the Euclidean entailment relation in semantic or deductive terms, or
giving an account of geometric validity. Rather, our goal has been to
understand which \emph{individual inferences} are licensed by
Euclidean practice, so that a line-by-line comparison renders our
formal proofs close to Euclid's. To the extent in which we have
succeeded, this provides a sense in which the proofs in the
\emph{Elements} are more rigorous than is usually claimed. In
particular, we have identified precise rules that govern diagrammatic
inferences, which are sound relative to modern semantics; and we have
shown that, for the most part, Euclid's proofs obey these rules. As a
result, the proofs in the \emph{Elements} now seem to us to be much
\emph{closer} to formal proof texts than almost any other instance of
informal mathematics.

In Section~\ref{completeness:section}, we showed that our formal
system is sound and complete for an appropriate semantics of ruler and
compass constructions. Insofar as our formal system captures Euclidean
practice, this shows that the modern semantics provides an accurate
characterization of the provable Euclidean theorems.

In Section~\ref{implementation:section}, we described some initial but
promising attempts to verify the inferences of $\na{E}$ using current
automated reasoning technology. Our findings suggest that it should
not be difficult to develop a formal proof checker for $\na{E}$. This
provides further support to our claim that proofs in the
\emph{Elements} are much closer to formal proofs than is usually
acknowledged. The way proofs in $\na{E}$ organize data into metric and
diagrammatic components, each of which is individually more manageable
than their union, hints at a strategy that should have broader
application to formal verification.

Finally, we emphasize that we have provided a \emph{logical} analysis,
which screens off cognitive, historical, and broader philosophical
questions related to diagram use. This is not to deny the importance
of such questions. On the contrary, we feel that by fixing ideas and
clarifying basic notions, the logical analysis can support the study
of diagram use and Euclidean practice. Thus we take our analysis to show
how the norms of a mathematical practice can be analyzed on their own
terms, in a way that can support broader inquiry. We hope that we have
also demonstrated that such analysis can be rewarding, providing us
with a better understanding of the mathematics itself.

\subsection{Questions and related work}

Our work is situated in a long tradition of axiomatic studies of
geometry, from Hilbert to Tarski and through to the present day. Our
emphasis is novel, in that we have tried to characterize a particular
geometric practice and style of argumentation. In contrast, modern
axiomatic studies aim to provide a deeper understanding of geometry in
modern terms, focusing, for example, on the dependence and
independence of axioms and theorems, the results of dropping or
modifying various axioms, and the relationships to other axiomatic
systems. We cannot provide an adequate survey of these topics here,
but recommend textbooks by Coxeter~\cite{coxeter:69} and
Hartshorne~\cite{hartshorne:05}. (See also the article by Tarski and
Givant~\cite{tarski-givant}, which surveys the history of geometric
studies by Tarski and his students.)

Our project does raise some traditional logical questions, however.
For example, our diagrammatic axioms are all universal axioms, and
describe a subset of the universal consequences of Tarski's axioms for
Euclid's geometry. It would be nice to have a natural semantic
characterization of this set of universal sentences. We know that it
is a strict subset of the set of universal consequences of affine
geometry: Hilbert \cite[Chapter V]{hilbert:99} showed that Desargues'
theorem, which is a consequence of affine geometry, cannot be proved
in the plane without the axioms of congruence. Also, given that our
construction rules are not independent, it would be nice to
have a more principled way of generating the list, beyond simply
running through the \emph{Elements} and making a list of the ones that
Euclid seems to use. Finally, as we have mentioned, the question as to
the decidability of the $\forall \exists$ consequences of Tarski's
axioms, and hence the decidability of $\na{E}$, remain open.

Read as first-order axioms, all the basic rules of $\na{E}$ are given
by universal formulas, except for the construction rules, which have
$\forall \exists$ form. If we introduce Skolem functions for these
axioms, Herbrand's theorem implies that any theorem of $\na{E}$ can be
witnessed by an explicit construction involving these functions,
together with ``if \ldots then \ldots else'' statements involving
atomic conditions. This provides one sense in which Euclidean geometry
is ``constructive.'' However, conditional expressions are undesirable;
from a constructive perspective, for example, it may be impossible to
determine whether a point is actually on a line or only very close to
it. Jan von Plato \cite{VPla95} provides a strictly constructive
version of affine geometry (see also \cite{VPla98}).
Michael Beeson \cite{beeson:unp} characterizes the problem nicely
by observing that Euclid's constructions are not \emph{continuous} in
the input data, and offers a constructive version of Euclidean
geometry.

Our project also gives rise to computational questions. On the
theoretical side, there is, of course, the problem of
providing sharp upper and lower bounds on the complexity of
recognizing the various types of inference that, according to
$\na{E}$, Euclid sanctions as immediate. The challenge of 
obtaining \emph{practical} implementations should give rise to
interesting problems and solutions as well.

The implementation of a proof-checker for $\na{E}$ could be used to
help teach Euclidean geometry, and Euclidean methods of proof. There
are a number of graphical software packages in existence that support
geometric exploration and reasoning, of which the best known are
perhaps the \emph{Geometer's Sketchpad} \cite{geometers:sketchpad},
\emph{Cabri} \cite{cabri:geometry}, and \emph{Cinderella}
\cite{gebert:kortenkamp:99}. These systems do not, however, focus on
teaching geometric \emph{proof}. Others have explored the use of
graphical front ends to conventional proof assistants, supported by
specialized decision procedures for geometry. As we were completing
a draft of this paper, we came across Narboux~\cite{narboux:07}, which
not only provides a thorough survey of such work, but also describes
an impressive effort, \emph{Geoproof}, along these lines. Even though
\emph{Geoproof} is not based on an explicit analysis of Euclidean
proof, it is interesting to note that its primitives and construction
rules bear a striking similarity to ours.

\subsection{Broader issues}

In the end, what is perhaps least satisfying about our analysis is
that we do not go beyond the logical and computational issues: we
provide a detailed description of the norms governing Euclidean proof
without saying anything at all about how those norms arose, or why
they should be followed. We will therefore close with just a few words
about some of the cognitive, historical, and more broadly
philosophical issues that surround our work.

On the surface, it might seem that there is a straightforward
cognitive explanation as to why some of Euclid's diagrammatic
inferences are basic to geometric practice, namely, that these
inferences rely on spatial properties that are ``hardwired'' into our
basic perceptual faculties. In other words, thanks to evolution, we
have very good faculties for picking out edges and surfaces in our
environment and inferring spatial relationships; and these are the
kinds of abilities that are needed to support diagrammatic inference.
But one should be wary of overly simplistic explanations of this sort;
see the discussion in \cite{avigad:giaquinto:review}. In particular,
one should keep in mind that mature mathematical behavior is only
loosely related to more basic perceptual tasks. For instance, the
example discussed in Section~\ref{generality:section} shows that
Euclidean geometric reasoning requires keeping in mind that only some
features present in a diagram are essential to the mathematical
context it is supposed to illustrate. Informal experimentation on some
of our nonmathematical friends and family members shows that the
expected response to this exercise is by no means intuitively clear;
in other words, there seems to be a learned mathematical component to
the normative behavior. At the same time, we do not doubt that a
better understanding of our cognitive abilities can help explain why
certain geometric inferences are easier than others. It would
therefore be nice to have a better understanding of the cognitive
mechanisms that are involved in such reasoning.

We hope that our analysis can support a refined historical
understanding as well. Historians will cringe at our naive claim to
have analyzed ``the text of the \emph{Elements}''; there is a long and
complicated history behind the \emph{Elements}, and we have focused
our attention on only one translation (Heath's) of one version of the
text (Heiberg's). We do expect that, for the most part, our findings
are robust across the various editions. In fact, some features of the
historical record nicely support our claims. Saito \cite{saito:06} has
compared the diagrams in a number of editions of the \emph{Elements},
and has noted that earlier versions exhibit some striking differences
from the modern ones. For example, earlier diagrams are often
``overspecified'': a parallelogram mentioned in the statement of a
theorem may be depicted by a rectangle, or even a square. This sits
well with our claim that angle and metric information is never
inferred from the diagram; the fact that the metric information in the
diagrams is so blatantly misleading can be viewed as a subtle reminder
to the reader that it should not be relied upon.\footnote{We are
  grateful to Anthony Jones and Karine Chemla for this observation.}
On the other hand, if it turns out that there are ways in which our
analysis does not hold up well across historical developments, we
expect that our work can help clarify the nature of the historical
changes.

Moreover, we hope our analysis can help support a better historical
understanding of the evolution of geometric reasoning, and the
relationship between different geometric practices. There have been
rich historical analyses of the problems and methods found in the
ancient geometric tradition \cite{knorr:85,Netz99}, as well as, say,
the transition to the analytic tradition of Descartes \cite{bos:01}.
Ken Manders has remarked to us that diagrams are used in fundamentally
different ways in nineteenth century projective geometry texts; as the
diagrams get more complicated, more of the burden of keeping track of
the information they represent is shifted to the text. We expect that
the type of analysis we carry out here can complement the historical
study, and sharpen our understanding of the mathematical developments.

Finally, there is hope that the rules of Euclidean proof can be
``explained'' or ``justified'' not by cognitive or historical data,
but, rather, by broader epistemological considerations. For example,
Marco Panza \cite{panza:unp} takes Euclidean practice to inform a
metaphysical account of the nature of geometric objects; Marcus
Giaquinto \cite{giaquinto:07} takes cognitive data to support
epistemological conclusions regarding the role of visualization in
mathematics (but see the critique in \cite{avigad:giaquinto:review});
and Jamie Tappenden \cite{tappenden:05} explores ways of treating
visualization as an ``objective'' feature of mathematics, rather than
merely a cognitive device. It is possible that a suitably abstract
characterization of our cognitive abilities or the spatial situations
the practice tries to model can provide an informative sense in which
our fundamental inferences are the ``right'' ones for the task.

Kant famously took the fundamental principles of geometry to provide
synthetic knowledge, grounded by our \emph{a priori} intuition of
space:
\begin{quote}
  Take the proposition that with two straight lines no space at all
  can be enclosed, thus no figure is possible, and try to derive it
  from the concept of straight lines and the number two; or take the
  proposition that a figure is possible with three straight lines, and
  in the same way try to derive it from these concepts. All of your
  effort is in vain, and you see yourself forced to take refuge in
  intuition, as indeed geometry always does. You thus give yourself an
  object in intuition; but what kind is this, is it a pure \emph{a
    priori} intuition or an empirical one? If it were the latter, then
  no universally valid, let alone apodictic proposition could ever
  come from it: for experience can never provide anything of this
  sort. You must therefore give your object \emph{a priori} in
  intuition, and ground your synthetic proposition on this.
  \cite[A47--A48/B64--B65]{Kant}.
\end{quote}
Indeed, his discussion of Euclid's Proposition I.32 in the
Transcendental Doctrine of Method \cite[A712--A725/B740--753]{Kant}
provides an illuminating account of how he takes such synthetic
reasoning to work. Kant's views on geometry have been studied in
depth; see, for example,
\cite{friedman:85,shabel:03a,shabel:03,shabel:04}. Lisa Shabel writes:
\begin{quote} [The] Kantian account of informal but contentful axioms
  of Euclidean geometry stemming directly from an \emph{a priori}
  representation of space is itself consistent with Euclidean
  practice: neither Euclid's elements nor its eighteenth-century
  analogs offer formal axioms but rather definitions and postulates
  which, if taken seriously, provide a \emph{mereotopological}
  description of the relations among the parts of the euclidean plane.
  The content of these relations is \ldots\ precisely what Kant alleges
  is accessible to us in pure intuition, prior to geometric
  demonstration. \cite[p.~213]{shabel:04}
\end{quote}
This provides us with a convenient way of framing our project: we have
provided a logical description of the mereotopological relations that
are implicit in Euclid's definitions and postulates, without feigning
hypotheses as to their origins. As Shabel's remarks suggest (see also
\cite[footnote 4]{shabel:04} and \cite{shabel:03a}), it would be
interesting if one could describe a more fundamental account of
spatial intuition that can serve to justify or explain the rules of
our system. Stewart Shapiro has suggested to us that it would also be
interesting to explain what distinguishes Euclid's axioms and
postulates from everything he does not say, that is, the assumptions
and rules of inference that we take to be implicit in the
\emph{Elements}.

In Section~\ref{introduction:section}, we noted that philosophers have
historically been concerned with the problem of how the particular
diagrams in the \emph{Elements} can warrant general conclusions. In
particular, a central goal of Kant's account
\cite[A712--A725/B740--753]{Kant} is to explain how singular objects
given in intuition can provide general knowledge. Jeremy Heis has
pointed out to us that a curious feature of our account of Euclidean
geometry is that the role of the singular --- that is, the particular
diagram --- drops out of the story entirely; we focus only on the
diagrammatic features that are generally valid in a given context, and
say nothing about a particular instantiation.

There is a fairly mundane, if partial, explanation of the role that
concrete diagrams play in geometric practice. Although not every
feature found in a particular diagram will be generally valid, the
converse is more or less true: any generally valid consequence of the
diagrammatic hypotheses will be present in a sufficiently well-drawn
diagram. A particular diagram can therefore serve as a heuristic
guide, suggesting candidates for diagrammatic consequences that are,
perhaps, confirmed by other forms of reasoning. Mumma's original
system, $\na{Eu}$, is more faithful to this understanding of diagram
use; for example, the prover can label a point of intersection in a
particular diagram associated with a proof, independent of the
mechanisms that are invoked to justify the fact that the intersection
is present in general. Some systems of automated reasoning rely on
crude procedures to search for possible proof candidates, and then
employ other methods to check and fill in the details (see, for
example, \cite{meng:paulson:08,veroff:01}). It therefore seems to us
worth noting that diagram use in mathematics raises two separate
issues: first, how (or whether) alternative, nonpropositional
representations of mathematical data can be used to facilitate or
justify inferences; and, second, how overspecific or imperfect
representations can be used to support the reasoning process.  Leitgeb
\cite{leitgeb:unp} begins to address the first issue.

As the vast literature on the \emph{Elements} indicates, Euclidean
geometry has been a lively source of questions for scholars of all
persuasions for more than two millennia. We only hope that the
understanding of Euclidean proof we present here will prove useful in
furthering such inquiry.


\end{document}